\numberwithin{equation}{section}
\theoremstyle{plain}
\newtheorem{thm}{Theorem}[section]
\newtheorem{lem}[thm]{Lemma}
\newtheorem{cor}[thm]{Corollary}
\newtheorem{prop}[thm]{Proposition}
\theoremstyle{definition}
\newtheorem{exam}[thm]{Example}
\newtheorem{dfn}[thm]{Definition}
\newtheorem{defn}[thm]{Definition}
\DeclareMathOperator{\supp}{\mathrm{supp}}
\DeclareMathOperator{\im}{{\mathrm{Im}}}
\DeclareMathOperator{\re}{{\mathrm{Re}}}
\DeclareMathOperator{\Zt}{{\mathscr{Z}}}
\DeclareMathOperator{\Ot}{{\mathscr{O}}}
\DeclareMathOperator{\Otm}{{\mathscr{O}_M}}
\DeclareMathOperator{\Jtm}{\mathcal{J}_M}
\DeclareMathOperator{\Jt}{\mathcal{J}}
\title[Non completely solvable ...]{Non completely solvable systems\\ of
   complex first order 
PDE's}
\author[C.D.Hill]{C. Denson Hill}
 \address{C. D. Hill:       Department of Mathematics,\\
        Stony Brook University\\
        Stony Brook NY 11794 (USA)}
\email{dhilll@math.sunysb.edu}
\author[M.~Nacinovich]{Mauro Nacinovich}
\address{M.\ Nacinovich:
Dipartimento di Matematica\\ II Universit\`a di Roma
``Tor Ver\-ga\-ta''\\ Via della Ricerca Scientifica\\ 00133 Roma
(Italy)}
\email{nacinovi@mat.uniroma2.it}
\subjclass[2000]{Primary: 35F05 
Secondary: 32V05, 14M15, 17B20, 57T20}
\keywords{Complex vector fiedls, $CR$ manifolds}
\begin{document}
\maketitle


\section*{Introduction}
\subsection*{History and motivation} 
Hans Lewy in \cite{L57} and Louis Niremberg in \cite{Nir74}
gave two fundamental results in the theory of linear partial
differential equations. The first showed that a non homogeneous
equation for a first order partial differential operator with
complex valued
real analytic coefficients, but $\mathcal{C}^{\infty}$-smooth
right hand side, may, in general, have no local weak solution.
The second, that a homogeneous equation 
for a first order partial differential with complex valued smooth
coefficients may have no non constant weak local solutions.
Both results were formulated and proved for partial differential
operators in $\mathbb{R}^3$. 
A fuller understanding of \cite{L57} opened different directions
of investigation (see e.g. \cite{AFN81, AH72, Ho61, NT70, NT71}),
especially from the two points of view of p.d.e. theory and of
the analysis of $CR$ manifolds.\par
Nirenberg's example was especially relevant to the problem of
embedding $CR$ manifold into complex manifolds. From this point
of view, there have been two types of results. The
Nirenberg
example means that pseudoconvex
three dimensional $CR$ hypersurfaces
cannot be locally $CR$-embedded. However 
the existence of
sufficiently many independent solutions of the tangential 
Cauchy-Riemann equations was shown to hold for 
pseudoconvex higher dimensional $CR$ hypersurfaces
(see e.g. \cite{Ak87, BdM74, C94, K82a, K82b, K82c}), and
some general results were also obtained
in terms of Lie algebras of vector fields (see e.g. \cite{BR87, HN99}).
In the opposite direction, the counterexample of \cite{Nir74} was 
extended to $CR$ hypersurfaces with degenerate or non degenerate
Lorentzian signature (see e.g. \cite{H88, H91, J86, JT82, JT83} 
\par
The results above were all obtained for the case of $CR$ hypersurfaces.
For higher codimension, a crucial invariant is the
scalar Levi form, which is parametrized by the characteristic codirections
of the tangential Cauchy-Riemann complex. The first result 
in higher codimension on 
the absence of the Poincar\'e lemma at the place $q$
when some non degenerate scalar Levi form has $q$ positive eigenvalues
was first proved in \cite{AFN81}. In \cite{HN06} this result was extended
to some cases where the scalar Levi form is allowed to degenerate. 
Much less is known about the $CR$-embedding of manifolds of higher
$CR$ codimension. In \cite{mez93} some results of \cite{JT82} are
extended under some supplementary conditions of the Cauchy-Riemann
distribution. We also cite some partial results in \cite{BHN02,
BHN03, HN93}. \par
Here we want to reconsider some of these questions, also in the more general
framework of general distributions of complex vector fields 
of \cite{AHNP}.
\subsection*{Contents of the paper}
Let $M$ be a smooth paracompact manifold of dimension $m$, and let
$L_1,\hdots,L_n$ be smooth complex vector fields on $M$. In local coordinates
each $L_j$ can be written as
\begin{equation}
  \label{eq:a1}
  L_j=L_j(x,D)={\sum}_{i=1}^ma_{j,i}(x)\frac{\partial}{\partial{x}_i},
\end{equation}
with coefficients $a_{j,i}$ which are assumed to be complex valued and
$\mathcal{C}^{\infty}$-smooth.
We are interested in considering local solutions of the 
homogeneous system
\begin{equation}
  \label{eq:a2}
  L_ju=0,\quad \text{for}\; j=1,\hdots,n.
\end{equation}
When $n>1$, since every local distribution solution $u$ of \eqref{eq:a2}
also satisfies \begin{equation*}
[L_{j_1},L_{j_2}]u=
(L_{j_1}L_{j_2}-L_{j_2}L_{j_1})u=0, \;\hdots,\;
[L_{j_1},[L_{j_2},[\hdots,L_{j_r}]]]u=0
\end{equation*}
for all sequence
$j_1,j_2,\hdots,j_r$ with  $1\leq j_1,j_2,\hdots,j_r\leq{n}$,
it is not restrictive to require that $L_1,\hdots,L_n$ satisfy the
formal Cartan integrability conditions, i.e. that all commutators
$[L_{j_1},L_{j_2}]$ are linear combinations, with smooth coefficients,
of $L_1,\hdots,L_n$. \par
When this condition is satisfied, and
$L_1,\hdots,L_n$ define linearly independent tangent vectors on
a neighborhood $U_0$ of a point $p_0\in{M}$, there are at most
$m-n$ solutions $u_1,\hdots,u_{m-n}$
of \eqref{eq:a2}, with $du_1(p_0),\hdots,du_{m-n}(p_0)$ linearly
independent. In fact this is always the case when the $L_j$'s
have coefficients that are real analytic in some coordinate
neighborhood of $p_0$. Nirenberg's result in \cite{Nir74} shows
that in general this is not true in the $\mathcal{C}^{\infty}$ case 
if $n=1$, $m=3$. A small perturbation
of a vector field for which \eqref{eq:a2} has two analytically independent
solutions changes to a vector field for which all local solutions of
\eqref{eq:a2} are constant. \par
In \S{\ref{sec:b}}
we show how this result extends to the case
where $n=1$, but $m$ is allowed to be any integer larger or equal
to $3$. Namely, we show that the smooth complex vector fields
for which \eqref{eq:a2} admits non locally constant solutions near
some point of $M$ form a small nowhere dense set of first Baire
category in the Fr\'echet space of complex vector fields on $M$.
\par
This generalization of \cite{Nir74} was already given in \cite{mez93},
and our main goal is to extend in fact the results of \cite{JT82}
to the case of higher $CR$ codimension. 
We show that in general, given a smooth manifold $M$ and
any locally $CR$-embeddable
Lorentzian $CR$ structure on $M$, and a point $p_0\in{M}$,
there is a new Lorentzian $CR$ structure, 
which is defined on a neighborhood
of $p_0$ in $M$, and agrees to infinite order with the original one at
$p_0$, 
which is not locally $CR$-embeddable. 
We also show that the corresponding system \eqref{eq:a2} is not
completely integrable in the class $\mathcal{C}^1$.
\par
In \S\ref{sec:d} we collect the notions on $CR$ manifolds that will
be employed throughout the rest of the paper.
In \S\ref{sec:3x}, \S\ref{sec4}, \S\ref{sh}, \S\ref{sec:4} 
we prove the analog of the result
of \S\ref{sec:b} for overdetermined systems 
by adaptations of the arguments therein. 
The results
are weaker than those obtained for a scalar p.d.e. 
In fact, our constructions involve perturbations of an original
system which, to keep formal integrability, employ either 
functions that are constant with respect to some variables, or,
in \S\ref{sh}, special morphisms of $CR$ manifolds, and, 
in the more special cases of \S\ref{s:5}, 
analytic objects, called
$CR$-divisors. In general, we obtain
new overdetermined systems which are only defined in small
coordinate neighborhoods.
In \S\ref{s6} we prove that we can globally define a new
$CR$ structure on the Lorentzian real quadric $Q$ in $\mathbb{CP}^{\nuup}$
which is not locally $CR$-embeddable at all points of a hyperplane section.
\par
In \S\ref{sh} we also describe the $CR$ complexes and show in \S\ref{sec:62}
how the technique used in the rest of the paper can be also
employed to give proofs of the non validity of the Poincar\'e  lemma
different from those of \cite{AFN81, AH72, HN06}.

\section{Homogeneous equations with no nontrivial solutions}
\label{sec:b}
In this section we prove  
a generalization  to dimensions $\geq{3}$ of a remarkable theorem 
of Nirenberg 
about local homogeneous solutions
to a single homogeneous linear partial differential equation
having smooth variable complex coefficients
(\cite{Nir74}, see also  \cite{JT82, 
  mez93}).
\par\smallskip
Here and in the following sections, $M$ will denote a smooth paracompact
real manifold of dimension $m$.\par
We denote by $\mathfrak{X}^{\mathbb{C}}(M)$ the Fr\'echet space
of all $\mathcal{C}^{\infty}$ complex vector fields on $M$.
Note that $\mathfrak{X}^{\mathbb{C}}(M)$ includes also real
vector fields on $M$. 
When $M$ is an open set in $\mathbb{R}^m$, each 
$L\in\mathfrak{X}^{\mathbb{C}}(M)$ can be written as
\begin{equation*}
  L=a_1(x)\frac{\partial}{\partial{x}_1}+a_2(x)\frac{\partial}{\partial{x}_2}
+\cdots +a_m(x)\frac{\partial}{\partial{x}_m},
\end{equation*}
where $x=(x_1,x_2,\hdots,x_m)$, and the coefficients $a_j(x)\in
\mathcal{C}^{\infty}(M)$ are (in general) complex valued.\par
\begin{thm}\label{thm:b1}
  Let $M$ be a smooth manifold of dimension $m\geq{3}$. Then
the set $\mathfrak{E}$ 
of $L\in\mathfrak{X}^{\mathbb{C}}(M)$ for which there
exists a non empty open subset $U$ of $M$,
an $\epsilon>0$,   and a solution $u\in\mathcal{C}^{1+\epsilon}(U)$
of $Lu=0$ on $U$ with $du(p)\neq{0}$ for at least one $p\in{U}$,
is a nowhere dense set of first (thin) Baire category.
\end{thm}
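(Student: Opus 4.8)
The plan is to realize $\mathfrak{E}$ as a countable union of closed subsets of $\mathfrak{X}^{\mathbb{C}}(M)$, each with empty interior, the emptiness of the interior being the substantive point and resting on a perturbation construction of the type introduced by Nirenberg. \emph{Step 1 (a countable family).} Cover $M$ by countably many coordinate charts, each identified with a ball in $\mathbb{R}^m$; in each chart fix a countable dense set of points $p$, rational radii $0<r'<r$ with $\overline{B(p,r)}$ inside the chart, and integers $k,N\geq 1$. For each such datum $\nu$ let $\mathfrak{E}_\nu$ be the set of $L\in\mathfrak{X}^{\mathbb{C}}(M)$ for which there is a $u\in\mathcal{C}^{1+1/k}(\overline{B(p,r)})$ with $Lu=0$ on $B(p,r)$, $\|u\|_{\mathcal{C}^{1+1/k}(\overline{B(p,r)})}\leq N$ and $\max_{\overline{B(p,r')}}|du|=1$. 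If $L\in\mathfrak{E}$, so that $Lu=0$ for some nonconstant $u\in\mathcal{C}^{1+\epsilon}(U)$ with $du(q)\neq 0$ at some $q\in U$, then $du$ is nonzero on a neighbourhood of $q$; choosing $p$ close to $q$, $r'$ small, $r>r'$ with $\overline{B(p,r)}\subset U$, then $k$ with $1/k\leq\epsilon$, rescaling $u$ by a constant, and finally $N$ large, we get $L\in\mathfrak{E}_\nu$. Hence $\mathfrak{E}=\bigcup_\nu\mathfrak{E}_\nu$, a countable union.

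\emph{Step 2 (each $\mathfrak{E}_\nu$ is closed).} Let $L_j\to L$ in $\mathfrak{X}^{\mathbb{C}}(M)$, with associated functions $u_j$. The sequences $\{u_j\}$ and $\{\nabla u_j\}$ are bounded in $\mathcal{C}^1(\overline{B(p,r)})$ and $\mathcal{C}^{1/k}(\overline{B(p,r)})$ respectively, so by Arzel\`a--Ascoli a subsequence converges in $\mathcal{C}^1(\overline{B(p,r)})$ to a function $u$, and a standard lower semicontinuity argument gives $u\in\mathcal{C}^{1+1/k}$ with $\|u\|_{\mathcal{C}^{1+1/k}}\leq N$; the normalization $\max_{\overline{B(p,r')}}|du|=1$ passes to the limit, so $du\not\equiv 0$. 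Finally $\|Lu\|_{\mathcal{C}^0}\leq\|L_ju_j\|_{\mathcal{C}^0}+\|(L-L_j)u_j\|_{\mathcal{C}^0}+\|L(u-u_j)\|_{\mathcal{C}^0}$, and all three terms tend to $0$: the first is $0$, the second because the coefficients of $L_j$ tend to those of $L$ uniformly on $\overline{B(p,r)}$ while $\|\nabla u_j\|_{\mathcal{C}^0}\leq N$, the third by $\mathcal{C}^1$ convergence. Thus $Lu=0$ and $L\in\mathfrak{E}_\nu$. (Note that it is precisely the $\mathcal{C}^{1+\epsilon}$ hypothesis, rather than merely $\mathcal{C}^1$, that supplies the compactness used here.)

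\emph{Step 3 (each $\mathfrak{E}_\nu$ has empty interior).} Given $L$ and a Fr\'echet neighbourhood of it, I would exhibit an $L'$ in that neighbourhood admitting no nonconstant $\mathcal{C}^1$ solution on any open subset of the chart, whence $L'\notin\mathfrak{E}_\nu$. An arbitrarily small perturbation first makes $L'$ nowhere zero on $\overline{B(p,r)}$ and not, on any open set, a nonvanishing complex multiple of a real vector field — the latter case being excluded because such an $L'$ has $m-1$ functionally independent smooth first integrals — and one may in addition keep $[\mathrm{Re}\,L',\mathrm{Im}\,L']$ from being a section of $\langle\mathrm{Re}\,L',\mathrm{Im}\,L'\rangle$, since otherwise $L'$ restricted to the leaves of an integrable plane field would again be locally solvable. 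After a coordinate change this puts $L'$ near a Mizohata-type normal form, and at this point one invokes the construction of \cite{Nir74} (see also \cite{JT82,mez93}): adding to $L'$ a further $\mathcal{C}^\infty$ perturbation, flat to infinite order along a hypersurface and hence as small as desired in the Fr\'echet topology, one arranges the relevant coefficient so that the flow of the associated bracket field on the leaves never closes up; Nirenberg's argument then shows that a $\mathcal{C}^{1+\epsilon}$ solution $u$ of $L'u=0$ would have to be constant along these dense orbits, hence constant. For $m=3$ this is exactly \cite{Nir74}; the extra point for $m>3$, following \cite{mez93}, is that the perturbation must be made to involve all the variables $x_1,\dots,x_m$, with the winding transverse to every coordinate hyperplane, so that no function of a proper subset of the variables solves $L'u=0$ either. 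Since the neighbourhood was arbitrary, $\mathrm{int}\,\mathfrak{E}_\nu=\emptyset$.

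\emph{Step 4 (conclusion and the main obstacle).} By Steps 1--3, $\mathfrak{E}=\bigcup_\nu\mathfrak{E}_\nu$ is a countable union of closed sets with empty interior, hence a set of first (thin) Baire category; and since the construction of Step 3 in fact produces a nonempty $\mathcal{C}^\infty$-open family of admissible $L'$ near every $L$, the complement of $\mathfrak{E}$ contains an open dense set, so $\mathfrak{E}$ is nowhere dense. I expect the only real difficulty to be Step 3: the heart of the matter is Nirenberg's mechanism for producing, by an arbitrarily small perturbation, a first order operator with no nonconstant local solution at all, together with the two refinements needed here — coupling all $m\geq 3$ variables so that the obstruction cannot be evaded in the extra directions, and arranging the perturbation to be robust enough to yield the $\mathcal{C}^\infty$-open family used for the nowhere-density assertion. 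Steps 1 and 2 are the routine Baire-category bookkeeping, and Step 2 is where the Hölder exponent $\epsilon$ is used in an essential way.
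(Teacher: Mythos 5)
Your Steps 1 and 2, and the Baire bookkeeping in Step 4, are sound, and they parallel what the paper does; but Step 3 --- which you yourself flag as the heart of the matter --- is a genuine gap, and it is larger than your closing paragraph suggests. Because you normalize by $\max_{\overline{B(p,r')}}|du|=1$ over a whole ball, showing $\mathrm{int}\,\mathfrak{E}_\nu=\emptyset$ really does require producing, arbitrarily close to an \emph{arbitrary} $L$, an operator $L'$ with no nonconstant $\mathcal{C}^1$ solution on any open subset of the chart (or at least none with $du\not\equiv 0$ on $\overline{B(p,r')}$). That is essentially the full strength of Nirenberg's theorem and of Meziani's higher--dimensional extension, and you do not prove it: the reduction of a general $L$ to a Mizohata-type normal form, and the assertions that one can arrange ``the flow of the associated bracket field on the leaves never closes up'', that solutions are ``constant along these dense orbits'', and that for $m>3$ the ``winding'' can be made ``transverse to every coordinate hyperplane'', are appeals to \cite{Nir74,mez93} rather than arguments. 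The paper avoids exactly this difficulty by a different decomposition: its sets $\mathfrak{E}(\nu,h)$ are pinned at a fixed point $p_\nu\in U_\nu$ through the condition $|du(p_\nu)|\geq 1/h$, so that empty interior follows from the much weaker Lemma~\ref{lem:b2}: an arbitrarily $\mathcal{C}^h$-small perturbation $L$ of $L_0$ for which every local $\mathcal{C}^1$ solution merely satisfies $du(p_0)=0$ \emph{at that one point}. That lemma is proved in full (approximate by a real analytic Lorentzian vector field, use Cauchy--Kowalevski to get a local $CR$ embedding of $CR$ dimension $1$, show the fiber integrals $F(\tau)=\int_{M_\tau}u\,dz_0\wedge dz_2\wedge\cdots\wedge dz_k$ are holomorphic and vanish, then perturb by terms supported on the preimages $Q_\nu$ of discs accumulating at $p_0$ and conclude by the mean value theorem). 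Note that this pointwise lemma cannot simply be substituted into your scheme: with your normalization the maximum of $|du|$ over $\overline{B(p,r')}$ may be attained far from any distinguished point, so killing $du$ at one point proves nothing about $\mathfrak{E}_\nu$. You must either redo Step 1 with point-pinned sets (as the paper does) or genuinely prove the ``no nonconstant solutions'' statement you invoke.

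A secondary unsupported claim: in Step 4 you assert that the construction of Step 3 yields a nonempty $\mathcal{C}^\infty$-open family of admissible $L'$ near every $L$, so that the complement of $\mathfrak{E}$ contains an open dense set. Nothing in your sketch gives openness of the set of operators without nonconstant solutions, and such openness is not established in the paper either; what the paper's argument actually delivers is that $\mathfrak{E}$ is contained in a countable union of closed sets with empty interior, i.e.\ the first-category conclusion, with the dense second-category complement obtained from Baire's theorem rather than from any openness property.
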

In other words: the set of all $L$ on $M$ having the property
that any $u$ with H\"older continuous first derivatives, which
is a local solution to $Lu=0$, in any neighborhood of any point,
must be constant, is a dense set of the second (thick) Baire category.
\par
First we prove a Lemma.
\begin{lem}\label{lem:b2}
Let $M$ be a smooth Riemannian manifold of dimension $m\geq{3}$, and
let $L_0\in\mathfrak{X}^{\mathbb{C}}(M)$. Then for every 
point $p_0\in{M}$, $h\in\mathbb{N}$, and $\epsilon>0$ we can find 
$L\in\mathfrak{X}^{\mathbb{C}}(M)$ with
\begin{gather}
  \|L-L_0\|_{h,\, M}<\epsilon \quad\text{on}\quad{M},\;\\
\intertext{such that}
\label{eq:b2}
u\in\mathcal{C}^{1}(U),\;
U^{\mathrm{open}}\ni{p_0},\; Lu=0\;\text{on}\; U\;
\Longrightarrow \; du(p_0)=0.
\end{gather}
\end{lem}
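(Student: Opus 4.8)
The statement is local near $p_0$, and the perturbation we are allowed to make is small in a \emph{single} norm $\|\cdot\|_{h,M}$; this is the crux, since — unlike the vanishing theorems recalled in the Introduction — local solvability of $Lu=0$ is \emph{not} stable under $\mathcal{C}^{\infty}$-small perturbations. I would therefore fix a coordinate cube $Q\ni p_0$ in $M$, identify $p_0$ with the origin, and look for $L$ coinciding with $L_0$ outside a \emph{fixed} smaller cube $Q'\Subset Q$: the perturbation $P:=L-L_0$ is supported in $Q'$ and is kept small not by shrinking $Q'$ but by driving its amplitude to zero. Since $M$ is paracompact Riemannian, $\|P\|_{h,M}$ is then bounded by a constant (depending only on the chart and the metric) times $\|P\|_{\mathcal{C}^{h}(Q')}$, so the bound $\|L-L_0\|_{h,M}<\epsilon$ holds once the amplitude is small enough.

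Next I would reduce to $m=3$. If $L_0(p_0)=0$, add $\tfrac{\epsilon}{2}\,\partial/\partial x_1$ first and work with $\epsilon/2$, so that $L_0(p_0)\neq0$. Dividing $L_0$ by a nonvanishing scalar and applying a diffeomorphism of $Q$ fixing $p_0$, one normalizes $L_0$ modulo a remainder vanishing to order $h+1$ at $p_0$, and splits the coordinates as $x=(z,t,x'')$ with $z=x_1+ix_2$, $t=x_3$, $x''=(x_4,\dots,x_m)$, so that along the $3$-plane $N_0:=\{x''=0\}$ the variables $x''$ are \emph{spectators}. Choosing $P$ to involve only $(z,\bar z,t)$ and to act only in the $\partial/\partial\bar z$ and $\partial/\partial t$ directions makes $L|_{N_0}$ an honest $\mathcal{C}^h$-small perturbation of the three-dimensional field $L_0|_{N_0}$, and every $\mathcal{C}^1$ solution of $Lu=0$ near $p_0$ restricts to a solution of $(L|_{N_0})(u|_{N_0})=0$ near $p_0$ in $N_0\cong\mathbb{R}^3$. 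Running this construction along the $m-2$ planes $N_j=\langle\partial_1,\partial_2,\partial_j\rangle$, $3\le j\le m$ (whose tangent spaces together span $T_{p_0}M$), with the $m-2$ perturbations placed on disjoint dyadic scales near $p_0$ so they neither interfere nor spoil the bound, the desired conclusion $du(p_0)=0$ on $M$ follows from $d(u|_{N_j})(p_0)=0$ for every $j$. Hence it suffices to treat $m=3$.

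For $m=3$ I would carry out Nirenberg's construction (\cite{Nir74}; see also \cite{JT82,mez93}), adapted to the given $L_0$ by normalizing it to order $h+1$ and absorbing the infinite-order tail into the small amplitude of $P$. This yields an $L$ in the prescribed $\mathcal{C}^h$-neighborhood of $L_0$ together with a \textit{book of pages}: a one-parameter family of embedded $2$-discs $\{\mathcal{P}_\lambda\}$ through $p_0$, filling a neighborhood of $p_0$, tangent to both $\mathrm{Re}\,L$ and $\mathrm{Im}\,L$, on each of which $L$ restricts in intrinsic coordinates to a Mizohata-type operator whose characteristic curve passes through $p_0$ and which is \emph{not} locally solvable there. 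Given $u\in\mathcal{C}^1(U)$ with $Lu=0$ on $U\ni p_0$, the restriction $u|_{\mathcal{P}_\lambda}$ solves this operator on the disc, hence is a $\mathcal{C}^1$ function of its first integral; a winding-number (argument-principle) obstruction — the analytic expression of the solvability defect, valid already in the $\mathcal{C}^1$ class — forces $u|_{\mathcal{P}_\lambda}$ to be constant on each page, so $u$ is constant near $p_0$ and in particular $du(p_0)=0$. Transferring through the chart gives the statement on $M$.

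The main obstacle is the simultaneous demand in the previous step: $P$ must be $\mathcal{C}^h$-small \emph{and} must manufacture the degenerate foliation by pages carrying non-solvable operators, now for an \emph{arbitrary} smooth $L_0$ rather than the model Lewy field. Handling this means normalizing $L_0$ only to finite order, treating the remaining tail as part of the perturbation, and checking that the pages and the solvability defect survive it — this is exactly what \cite{Nir74} and \cite{JT82,mez93} are invoked for (or must be redone for). A second, lesser difficulty is that the uniqueness lemma on each page cannot use hypoellipticity or any regularity gain, since solutions are only $\mathcal{C}^1$, so it has to be established by the topological argument-principle method.
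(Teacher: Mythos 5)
There is a genuine gap, and it occurs at both of your main steps. First, the reduction to $m=3$ by slicing does not work. Your normalization of $L_0$ is only modulo a remainder vanishing to finite (or even infinite) order \emph{at the single point} $p_0$; on $N_0\setminus\{p_0\}$ the field $L$ still has nonzero coefficients on $\partial/\partial x_j$, $j\geq 4$, so for a solution $u$ of $Lu=0$ the restriction $u|_{N_0}$ satisfies $(L|_{N_0})(u|_{N_0})=-\sum_{j\geq 4}a_j\,\partial u/\partial x_j$ on $N_0$, with transverse derivatives of $u$ over which you have no control; the conclusion $d(u|_{N_0})(p_0)=0$ does not follow. Making $L$ genuinely tangent to a fixed $3$-plane throughout a neighborhood would require killing those coefficients on all of $N_0$, which is not an $\epsilon$-small perturbation of a general $L_0$ and cannot be produced by a diffeomorphism. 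Moreover, each of your $m-2$ perturbations, supported in a dyadic annulus around $p_0$, meets every slice $N_j$, so the slices do not decouple as claimed.

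Second, the $m=3$ mechanism you describe cannot exist in the relevant situation. The case one must handle (and which the paper arranges by an $\epsilon$-small preliminary perturbation) is $L$, $\bar L$, $[L,\bar L]$ linearly independent; then the real plane field spanned by $\mathrm{Re}\,L$ and $\mathrm{Im}\,L$ is a contact structure and admits \emph{no} integral surfaces, so there is no family of $2$-discs tangent to both $\mathrm{Re}\,L$ and $\mathrm{Im}\,L$, and the ``restrict to pages and apply a winding-number obstruction'' argument cannot start. The actual mechanism of \cite{Nir74}, which the paper runs directly in dimension $m$ with no reduction, is different: after replacing $L_0$ (within $\epsilon$ in the $h$-norm) by a real-analytic field which is Lorentzian near $p_0$, the Cauchy--Kowalevski theorem yields $m-1$ independent first integrals $z_0,z_1,\hdots,z_k$, embedding a neighborhood as a $CR$ manifold of $CR$ dimension $1$ and codimension $m-2$; after a biholomorphic change the fibers $M_\tau=\{z_1=\tau\}$ are spheres shrinking to the point $p_0$; for any $\mathcal{C}^1$ solution the fiber integral $F(\tau)=\int_{M_\tau}u\,dz_0\wedge dz_2\wedge\cdots\wedge dz_k$ is holomorphic (Morera/Stokes) and vanishes identically by continuity up to the degenerate boundary; and the perturbation of $L_0$, supported over a sequence of discs in the $\tau$-plane accumulating at $0$, combined with Stokes and the mean value theorem, forces all first derivatives of $u$ to vanish at $p_0$. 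Note the lemma claims only $du(p_0)=0$, not local constancy. The part of your argument you delegate to \cite{Nir74, JT82, mez93} --- producing first integrals for an arbitrary $L_0$ and making the obstruction work in the $\mathcal{C}^1$ class after the perturbation --- is exactly the substance of the paper's proof, so as written the proposal does not establish the lemma.
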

\begin{proof}
We can argue on a small coordinate patch $\Omega$
about $p_0$, and then, substituting
$L_0$ by another vector field $L_0$ sufficiently close in the $h$-norm,
we can
assume that
the coefficients of $L_0$ are real analytic in the coordinates in $\Omega$,
and that
\begin{equation*}
  L_0(p),\;\bar{L}_0(p),\;[L_0,\bar{L}_0](p) \;
\text{are linearly independent in $\mathbb{C}T_pM$,$\forall{p}\in\Omega$}.
\end{equation*}
Let $k=m-2$.
By the real analyticity assumption, using the
Cauchy-Kowalevski theorem, and by shrinking $\Omega$ if needed,
we can find
$k+1$ complex valued real analytic $z_0,z_1,\hdots,z_m$ on $\Omega$ with
\begin{equation*}
  L_0z_i=0,\;\text{for $i=0,\hdots,k$},\quad dz_0\wedge d\bar{z}_0
\wedge dz_1\wedge\cdots\wedge dz_k
\neq{0} \quad \text{on}\;\Omega.
\end{equation*}
Let $x_i=\mathrm{Re}\,{z}_i$, $y_i=\mathrm{Im}\,{z}_i$.
We can also arrange
that $x_0,y_0,x_1,\hdots,x_k$ are real coordinates in $\Omega$ 
centered at $p_0$, and that $y_i(p_0)=0$,
$dy_i(p_0)=0$ for $i=1,\hdots,k$. This preparation yields a local
$CR$-embedding of $\Omega$ as a $CR$ submanifold of $CR$ dimension $1$
and $CR$ codimension $k$ in $\mathbb{C}^{k+1}$, given by
\begin{equation*}\begin{aligned}
 & y_i=h_i(z_0,x)\;\text{for}\; i=1,\hdots,k,\\
&\text{with}\;
x=(x_1,\hdots,x_k), \;\text{and}\; h_i=O(z_0\bar{z}_0+|x|^2).
\end{aligned}
\end{equation*}
After multiplication by a nowhere zero function, we can take
$L_0$ of the form
\begin{equation*}
  L_0=\frac{\partial}{\partial\bar{z}_0}+{\sum}_{i=1}^k{a_i}
\frac{\partial}{\partial{x}_i},\quad a_i\in\mathcal{C}^{\infty}(\Omega).
\end{equation*}
The condition that $[L_0,\bar{L}_0](p_0)\neq{0}$ implies that
$\partial^2h_i/\partial{z}_0\partial{\bar{z}_0}\neq{0}$ at $p_0$
for some index $i$. Moreover,
we note that we obtain new solutions of the homogeneous equation
$L_0u=0$ by taking for $u$ any holomorphic function of
$z_0,z_1,\hdots,z_k$. This allows us to use 
biholomorphic transformations
to obtain that
\begin{equation}\tag{$*$} \label{eq:bs}
  \text{the real Hessian of}\;  h_1(z_0,x)\;\text{is positive definite
in $\Omega$}.
\end{equation}
In this way, the sets $\Omega_r=\{p\in\Omega\mid \mathrm{Im}\,z_1<r\}$,
for $r>0$, form a fundamental system of open neighborhoods of
$p_0$ in $M$.
Set
\begin{equation*}
  M_{\tau}=\{p\in\Omega\mid z_1=\tau\},\;\text{for}\; \tau\in\mathbb{C}.
\end{equation*}
Then, by \eqref{eq:bs}, $M_0=\{p_0\}$, and
 there is an open 
connected neighborhood $\omega$ of $0$ in
$\mathbb{C}$ and a smooth real curve 
$\mathrm{Im}\,\tau=\phi(\mathrm{Re}\,\tau)$ in
$\omega$, passing through $0$,
with the properties
\begin{align}
  \tag{$i$}  & M_\tau\subset\Omega \;\text{if $\tau\in\omega$},\\
\tag{$ii$} & M_{\tau}=\emptyset \;\;
\text{if $\mathrm{Im}\,\tau<\phi(\mathrm{Re}\,\tau)$},\\ \label{eq:biii}
\tag{$iii$} & M_{\tau}=\{\text{a point}\}\;
\text{if $\mathrm{Im}\,\tau=\phi(\mathrm{Re}\,\tau)$},\\
\tag{$iv$} & M_{\tau}\simeq{S}^k\;
\text{if $\mathrm{Im}\,\tau>\phi(\mathrm{Re}\,\tau)$}.
\end{align}
Let $\{D_{\nu}\}$ be a sequence of pairwise disjoint 
closed discs in
\begin{displaymath}
 \omega^+=
\{\tau\in\omega\mid \mathrm{Im}\,\tau>\phi(\mathrm{Re}\,\tau)\},
\end{displaymath}
with centers and radii converging to $0$ for $\nu\to\infty$.
For a suitable $r_0>0$, all sets
\begin{displaymath}
  \omega^+_r=\{\tau\in\omega\mid \phi(\mathrm{Re}\,\tau)<\mathrm{Im}\,\tau
<r\}
\end{displaymath}
are connected, for $0<r<r_0$. Set
\begin{displaymath}
 \omega'_r=\omega^+_r\setminus{\bigcup}_{\nu}D_{\nu},\quad
 \Omega'_r=\{p\in\Omega\mid z_1(p)\in\omega'_r\}.
\end{displaymath}
Let $u$ be a $\mathcal{C}^1$ solution of $L_0(u)=0$ on
$\Omega'_r$,
 for some $0<r<r_0$. For each $\tau\in\omega'_r$ we define
\begin{equation*}
  F(\tau)=\int_{M_{\tau}}u \,dz_0\wedge dz_2\wedge\cdots\wedge dz_k.
\end{equation*}
We claim that $F$ is holomorphic in $\omega'_r$. 
Let indeed $\kappa$ be an arbitrary smooth simple closed curve in
$\omega_r$. Then ${\bigcup}_{\tau\in\kappa}M_{\tau}$ is the boundary
of a domain $N_{\kappa}$ in $\Omega$, that is diffeomorphic to
the Cartesian product of a $2$-disc and a $(k-1)$-ball, and
\begin{equation*}\begin{aligned}
\oint_{\kappa}F(\tau)d\tau&=
  \oint_{\kappa}d\tau\int_{M_{\tau}}\!\! u \,dz_0\wedge dz_2\wedge\cdots
\wedge dz_k 
=
\pm\int_{\partial{N}_{\kappa}}\! \! u \,dz_0\wedge dz_1\wedge
dz_2\wedge\cdots\wedge dz_k\\
&=
\pm\int_{N_{\tau}}du\wedge dz_0\wedge
dz_1\wedge\cdots\wedge dz_k=0,
\end{aligned}
\end{equation*}
because (see~\cite{hl56})
\begin{equation*}
  du\wedge{dz}_0\wedge{dz}_1\wedge\cdots\wedge{dz}_k=
(L_0u)\, d\bar{z}_0\wedge{dz}_0\wedge{dz}_1\wedge\cdots\wedge{dz}_k=0.
\end{equation*}
By Morera's theorem, $F$ is holomorphic on $\omega'_r$.
Moreover, $F$ extends to a continuous function on the closure of
$\omega_r'$ in $\omega\cap\{\mathrm{Im}\,\tau<r\}$, 
that equals $0$ for $\mathrm{Im}\,\tau=
\phi(\mathrm{Re}\,
\tau)$ because of \eqref{eq:biii}. It follows that
$F(\tau)=0$ for $\tau\in\omega'_r$. \par
For each $\nu\in\mathbb{N}$,
 let $Q_{\nu}=\{p\in\Omega\mid z_1(p)\in{D}_{\nu}\} $.
We fix smooth 
functions
$\psi_i$ in $\Omega$, such that $\psi_i dz_0\wedge d\bar{z}_0\wedge
\cdots\wedge{d}z_k$ is, for each $i=0,1,\hdots,k$, 
a non-negative real regular measure, with
\begin{equation*}
  \mathrm{supp}\,\psi_i={\bigcup}_{j=0}^{\infty}Q_{i+j(k+1)}
\end{equation*}
and such that, for
\begin{equation*}
  L=L_0+\psi_0\frac{\partial}{\partial{z}_0}+{\sum}_{i=1}^k
\psi_i\frac{\partial}{\partial{x}_i}
\end{equation*}
we have $\|L-L_0\|_h<\epsilon$.
Assume now that $u\in\mathcal{C}^{1}(\Omega_r)$
satisfies $Lu=0$. 
Hence, for all $\nu$ sufficiently large, $Q_{\nu}\subset\Omega_r$, and
\begin{equation*}\begin{aligned}
0&=\pm\oint_{\partial{D}_{\nu}}d\tau{\int}_{M_{\tau}}\!\! u\,
{dz}_0\wedge{dz}_2\wedge\cdots\wedge{dz}_k=
\int_{\partial{Q}_{\nu}}u\, dz_0\wedge{d}z_1\wedge\cdots\wedge
dz_k
\\ & 
=
\! \int_{Q_{\nu}}\!\!(L_0u)dz_0\wedge d\bar{z}_0
\wedge dz_1\wedge\cdots\wedge dz_k 
=\! \int_{Q_{\nu}}\!\!((L_0-L)u)dz_0\wedge d\bar{z}_0
\wedge dz_1\wedge\cdots\wedge dz_k
\end{aligned}
\end{equation*}
implies, by the mean value theorem, that, for all large $i\in\mathbb{N}$,
there are points $p_i,p'_i\in{Q}_j$ such that, for large $j$,
\begin{equation*}\begin{cases}
   \re\dfrac{\partial{u}(p_{j(1+k)})}{\partial{z}_0}=0,\;\;
\im\dfrac{\partial{u}(p'_{j(1+k)})}{\partial{z}_0}=0,
\\[8pt]
\re\dfrac{\partial{u}(p_{i+j(k+1)})}{\partial{x}_i}=0,\;\;
\im\dfrac{\partial{u}(p'_{i+j(k+1)})}{\partial{x}_i}=0,
\; \text{for $i=1,\hdots,k$}.
\end{cases}
\end{equation*}
By passing to the limit, as $p_j\to{p}_0$, 
we obtain that
\begin{equation*}
 \frac{\partial{u}(p_0)}{\partial{z}_0}=0,\;\;
\frac{\partial{u}(p_0)}{\partial{x}_i}=0,
\; \text{for $i=1,\hdots,k$}. 
\end{equation*}
Together with $Lu(p_0)=0$, this yields $du(p_0)=0$.
\end{proof}
\begin{proof}[Proof of Theorem \ref{thm:b1}]
We fix a Riemannian metric on $M$, so that we can compute the
length of vectors and covectors and the $\mathcal{C}^{h}$-norms
of functions defined on subsets of $M$. 
Then we have seminorms which endow $\mathfrak{X}(M)$ with a
Fr\'echet space topology, and we may discuss Baire category.
Let $\{U_{\nu}\}_{\nu\in\mathbb{N}}$ 
be a countable basis of non empty open subsets of $M$,
and for each $\nu\in\mathbb{N}$ fix a point  $p_{\nu}\in{U}_{\nu}$.
For $h\in\mathbb{N}$ we
define $\mathfrak{E}(\nu,h)$ to be the closure
in $\mathfrak{X}^{\mathbb{C}}(M)$ of the set of
$L$ such that
\begin{equation}\label{eq:b3}
  \exists u\in\mathcal{C}^{1+\tfrac{1}{h}}(U_{\nu})\;\text{with}\;
  \begin{cases}
    Lu=0\quad\text{on}\; U_{\nu},\\
\|u\|_{{1+\tfrac{1}{h}},\, U_{\nu}}\leq{h},\\
|du(p_{\nu})|\geq \frac{1}{h}.
  \end{cases}
\end{equation}
The set $\mathfrak{E}(\nu,h)$ has an
empty interior. 
This can be proved by contradiction. If some
$\mathfrak{E}(\nu,h)$ had an interior point, by Lemma \ref{lem:b2}
it would contain an interior point
$L$ satisfying \eqref{eq:b2} with $p_0=p_{\nu}$.
By definition, there is a sequence $\{L_j\}_{j\in\mathbb{N}}$
with $L_j\to{L}$ in $\mathfrak{X}^{\mathbb{C}}(M)$ for $j\to\infty$
such that for each $j$, there is 
$u_j\in\mathcal{C}^{1+\tfrac{1}{h}}(U_{\nu})$ with
$L_ju_j=0$ on $U_{\nu}$, 
$\|u_j\|_{1+\frac{1}{h},\,U_{\nu}}\leq{h}$ and
$|du(p_{\nu})|\geq\frac{1}{h}$.
By the Ascoli-Arzel\`a theorem, passing to a subsequence
we can assume that $u_j\to{u}\in\mathcal{C}^{1+\frac{1}{h}}(U_{\nu})$,
uniformly with their first derivatives on every compact neighborhood
of $p_{\nu}$ in $U_{\nu}$. Then $Lu=0$ on $U_{\nu}$, and
$|du(p_\nu)|\geq\frac{1}{h}>0$ contradicts \eqref{eq:b2}.
 \par
Therefore the union ${\bigcup}_{\nu,h}\mathfrak{E}(\nu,h)$ is 
a countable union of closed subsets having empty interior,
hence of first Baire category. Then also $\mathfrak{E}$ is
of first Baire category, because
$\mathfrak{E}\subset{\bigcup}_{\nu,h}\mathfrak{E}(\nu,h)$.
This completes the proof of the Theorem.
\end{proof}
\section{Involutive systems and $CR$ manifolds}\label{sec:d}
To extend the result of \S\ref{sec:b} 
to overdetermined systems of homogeneous first order p.d.e.'s,
we will develop ideas from \cite{H88, H91, J86}. 
In this section we begin by
describing the general framework. In the following, $M$ will denote
a $\mathcal{C}^{\infty}$-smooth manifold of real dimension $m$.
\subsection{Generalized complex distributions and $CR$ structures}
Let ${\Zt}$ be a generalized
distribution of smooth complex vector fields
on $M$. This means that ${\Zt}$ defines, for each open subset
$U$ of $M$ a, 
$\mathcal{C}^{\infty}(U)$ submodule
of $\mathfrak{X}^{\mathbb{C}}(U)$, in such a way that the assignment
$U\to{\Zt}(U)$ is a sheaf:
\begin{enumerate}
\item If $U^{\mathrm{open}}\subset{V}^{\mathrm{open}}\subset{M}$, 
then $Z|_{U}\in{\Zt}(U)$ for
all $Z\in{\Zt}(V)$;
\item If $\{U_{\nu}\}$ is a family of open subsets of $M$, a smooth 
complex vector field
$Z$, defined on ${\bigcup}_{\nu}U_{\nu}$, belongs to
$\Zt({\bigcup}_{\nu}U_{\nu})$ if and only if $Z|_{U_\nu}\in\Zt(U_{\nu})$
for all~$\nu$.
\end{enumerate}
\par
Our main interest in the sequel will be focused on the \textsl{local}
solutions 
to the homogeneous
system
\begin{equation}
  \label{eq:d1}
  Zu=0,\quad\forall Z\in{\Zt}.
\end{equation}
It is therefore natural to assume
in the following that
${\Zt}{}$ is \emph{involutive}, or \emph{formally integrable}. 
This means that
\begin{equation}
  \label{eq:d2}
  [Z_1,Z_2]\in{\Zt}(U),\;\forall Z_1,Z_2\in{\Zt}(U),
\quad\forall{U}^{\mathrm{open}}\subset{M}.
\end{equation}
Since ${\Zt}$ is a fine sheaf, every germ $Z_{(p)}$ of
${\Zt}$ at a point $p\in{M}$ is the restriction  of a
global $Z\in{\Zt}(M)$. Thus we can for simplicity
utilize global sections $Z\in{\Zt}(M)$ in most
of the discussion below.\par
For each point $p\in{M}$, we consider the set
\begin{equation}
  \label{eq:d3}
  \mathbf{Z}_pM=\{Z(p)\mid Z\in{\Zt}(M)\}\subset\mathbb{C}T_pM.
\end{equation}
If the dimension of the $\mathbb{C}$-linear space $ \mathbf{Z}_pM$
is constant, we say that ${\Zt}$ is a \emph{distribution} of
complex vector fields. 
\begin{dfn} A \emph{$CR$ structure} on $M$ is the datum of an 
involutive distribution
${\Zt}$ of smooth complex vector fields with ${\Zt}\cap\overline{{\Zt}}=
{{}}{0}$.
\par
The constant dimension $n$ of $\mathbf{Z}_pM$ is its
$CR$ dimension, and $k=m-2n$ 
its $CR$ codimension. We call the pair $(n,k)$ the \emph{type} of 
the $CR$ manifold~$M$.
\end{dfn}
In the case where $\Zt$ is a $CR$ structure on $M$, we 
write sometimes $T^{0,1}M$ for~$\mathbf{Z}M$.\par\smallskip
When $M$ is a real smooth submanifold of a complex manifold $\mathbb{X}$,
we consider on $M$ the generalized distribution
\begin{equation*}
  \Zt(U)=\{Z\in\mathfrak{X}^{\mathbb{C}}(U)\mid Z_p\in{T}^{0,1}_p\mathbb{X},\;
\forall p\in{U}\},
\end{equation*}
where ${T}^{0,1}\mathbb{X}$ is the bundle of 
anti-holomorphic complex tangent vectors to 
$\mathbb{X}$. Then $\Zt$ is involutive and ${\Zt}\cap\overline{{\Zt}}=
{{}}{0}$. When $\Zt$ has constant rank, 
$\Zt$ defines
a $CR$ structure on $M$, for which we say that $M$ is a \emph{$CR$-submanifold}
of $\mathbb{X}$. \par
Let $1\leq{a}\leq\infty$. A 
\emph{complex $CR$-immersion} of class $\mathcal{C}^a$ of
$M$ 
is a  $\mathcal{C}^a$-smooth immersion
$\phiup:M\to\mathbb{X}$ of $M$ into a complex manifold $\mathbb{X}$ 
with $d\phiup(\mathbf{Z}_pM)\subset{T}^{0,1}_{\phiup(p)}\mathbb{X}$
for all $p\in{U}$.

\par\smallskip
For any open set $U$ of $M$ we set
\begin{equation}
\Otm(U)=\{u\in \mathcal{C}^1(U)\mid
Zu=0,\;\forall Z\in{\Zt}(M)\}. 
\end{equation}
The assignment $U^{open}\to\Otm(U)$
defines a sheaf of
rings of germs of complex valued differentiable functions on $M$.
\subsection{The differential  ideal and complete integrability}
Let $\varOmega^*_M={\bigoplus}_{0\leq{p}\leq{m}}\varOmega^p_M$ 
be the sheaf of germs of alternated 
smooth differential forms on $M$. We associate to $\Zt$ the 
\emph{differential ideal}
\begin{equation}
  \label{eq:33} \Jtm=
{\bigoplus}_{p\geq{1}}\Jt^p_M 
\;\;\text{with}\;\;
\Jt_M^p=
\{\etaup\in\varOmega^p_M\mid \etaup|_{\Zt(M)}=0\}.
\end{equation}
This is a graded ideal sheaf of $\varOmega^*_M$, generated by its
elements of degree $1$. 
Being interested in the local solutions to \eqref{eq:d1},
we can assume that $\Jtm$ is \emph{complete} and that 
$\Zt$ is the \emph{characteristic system} of 
$\Jtm$, 
i.e. that \begin{align*}
\Zt(U)&=\{Z\in\mathfrak{X}^{\mathbb{C}}(M)\mid Z\rfloor\Jtm(U)\subset\Jtm(U)\}
\\
&=\{Z\in\mathfrak{X}^{\mathbb{C}}(U)\mid \etaup(Z)=0,\;\forall
\etaup\in\Jt_M^1(U)\}, \quad\forall{U}^{\text{open}}\subset{M}.
\end{align*}
If $\Zt$ is a distribution, it is the characteristic system of its differential
ideal. The pointwise evaluation of the elements of $\Jt_M^1$
yields in this case a smooth subbundle $\mathbf{Z}^0M$ of $\mathbb{C}T^*M$,
given by
\begin{equation}
  \label{eq:d4a}
  \mathbf{Z}^0M={\bigsqcup}_{p\in{M}} \mathbf{Z}^0_pM,\;\;\text{with}\;\;
 \mathbf{Z}^0_pM=\{\etaup\in\mathbb{C}T^*_pM\mid \etaup(Z)=0
\;\forall Z\in{\Zt}(M)\}.
\end{equation}
In general, \eqref{eq:d4a} defines a subset of the complexified tangent bundle
of $M$. 
\begin{dfn} Let ${\Zt}$ be a generalized
distribution of smooth complex vector fields on $M$ and $p_0\in{M}$. 
We say that ${\Zt}$ is 
\emph{completely integrable} at $p_0$ if
\begin{equation}
  \label{eq:d5}
  \forall \etaup\in\mathbf{Z}_{p_0}^0M
\quad 
\exists u\in\Otm_{(p_0)}\;\;
\text{with}\;\; du(p_0)=\etaup.
\end{equation}
\end{dfn} 
This means that \eqref{eq:d1} has at $p_0$ the largest number of
differentially independent local solutions that is permitted by the rank
of ${\Zt}$.
\subsection{The case of $CR$ manifolds}
Let ${\Zt}$ be a $CR$ structure of type $(n,k)$ on $M$.
Complete integrability at $p_0\in{M}$ 
is equivalent to the existence
of a complex $CR$-immersion of class $\mathcal{C}^1$ of 
an open neighborhood $U$ of $p_0$ 
into $\mathbb{C}^{n+k}$. 
\par
The question of the regularity of complex
$CR$-immersions seems in general a rather
delicate open problem (see e.g.
\cite{MM94}). Note that any $\mathcal{C}^1$-immersion is in fact 
$\mathcal{C}^{\infty}$-smooth
when $M$ satisfies suitable pseudo-concavity assumptions
(see \cite{AHNP}).\par
For $\mathcal{C}^{\infty}$-smooth complex local $CR$-immersions 
we introduce a special notation.
\begin{defn}
A $CR$-chart on $M$ is the datum of an open subset $U$  and 
of $n+k$ smooth $CR$ functions 
$z_1,\hdots,z_{n+k}\in\Otm(U)\cap\mathcal{C}^{\infty}(U)$, 
such that
\begin{equation*}
  dz_1(p)\wedge\cdots\wedge{dz}_{n+k}(p)\neq{0},\;\forall{p}\in{U}.
\end{equation*}
\end{defn}
Clearly $\phiup(p)=(z_1(p),\hdots,z_{n+k}(p))$ provides in this case
a $\mathcal{C}^{\infty}$-smooth $CR$-immersion of $U$ in $\mathbb{C}^{n+k}$.
\par
The functions 
$z_1,\hdots,z_{n+k}$ of a $CR$-chart are 
not independent complex coordinates
when $k>0$. For each point $p_0$ of $U$ there are indeed $k$ real valued
functions $\rho_1,\hdots,\rho_k$, 
defined and $\mathcal{C}^{\infty}$ 
on an open neighborhood $G$ of $\phiup(p_0)$ in
$\mathbb{C}^{n+k}$, with $\rho_i(z_1,\hdots,z_{n+k})=0$ on a neighborhood
of $p_0$, and 
$\partial\rho_1(\phiup(p_0))\wedge\cdots\wedge\partial\rho_k(\phiup(p_0)) 
\neq{0}$. 
\begin{defn}
We say that a $CR$ manifold $M$ is \emph{locally $CR$-embeddable} if 
the open subsets $U$ of its $CR$-charts make a covering.  
\end{defn}
\par
Locally $CR$-embeddable $CR$ manifolds can be abstractly defined 
as ringed spaces, using the structure sheaf 
$\Ot^{\infty}_M={\Ot}_M\cap\mathscr{C}^{\infty}$
of the germs of its smooth $CR$ functions. 
\begin{lem}\label{lem25}
Let $M$ be a $CR$ manifold of type $(n,k)$ and $p_0\in{M}$. 
Then we can find an open neighborhood $U$ of $p_0$ in $M$
and a new $CR$ structure on $M$ which is locally $CR$-embeddable
and agrees to infinite order with the original one at $p_0$.
\end{lem}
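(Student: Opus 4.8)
The plan is the following. Since the statement, and its later uses, are local, it is enough to work on a coordinate neighbourhood $U$ of $p_0$; there I will produce a formal $CR$-chart for the infinite jet of $\Zt$ at $p_0$, realise it by Borel's theorem, and take for the new structure the one obtained by pulling back the ambient $CR$ structure of $\mathbb{C}^{n+k}$. First I put $\Zt$ in normal form: choose coordinates $z=(z_1,\dots,z_n)\in\mathbb{C}^n$, $s=(s_1,\dots,s_k)\in\mathbb{R}^k$ centred at $p_0$ in which $T^{0,1}M$ is spanned on $U$ by
\[
  L_j=\frac{\partial}{\partial\bar z_j}+\sum_{\ell=1}^{k}a_{j\ell}\,\frac{\partial}{\partial s_\ell},\qquad j=1,\dots,n,
\]
with $a_{j\ell}\in\mathcal{C}^{\infty}(U)$ and $a_{j\ell}(p_0)=0$ (diagonalise the $\partial/\partial\bar z$-part of a frame and make a linear change of the $s$-variables). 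Since the $\partial/\partial\bar z_j$-components are linearly independent while every bracket $[L_i,L_j]$ involves only the $\partial/\partial s_\ell$, involutivity of $\Zt$ forces $[L_i,L_j]=0$ on $U$, that is $L_i a_{j\ell}=L_j a_{i\ell}$ for all $i,j,\ell$; note also that $z_1,\dots,z_n$ are $CR$ functions.

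The heart of the argument is the construction of a formal $CR$-chart. Let $\hat a_{j\ell}$ be the Taylor series of $a_{j\ell}$ at $p_0$ and $\hat L_j=\partial/\partial\bar z_j+\sum_\ell\hat a_{j\ell}\,\partial/\partial s_\ell$ the associated formal vector fields. I claim that for each $\ell=1,\dots,k$ there is a formal power series $g_\ell$ in $(z,\bar z,s)$ with $\hat L_j g_\ell=0$ for all $j$ and with linear part equal to $s_\ell$. One builds $g_\ell$ degree by degree: its homogeneous part of degree $N+1$ is governed by the equations $\partial(g_\ell)_{N+1}/\partial\bar z_j=R_{j,N}$, where $R_{j,N}$ is a homogeneous polynomial of degree $N$ in $(z,\bar z,s)$ built from $\hat a$ and from the previously determined parts $(g_\ell)_{\leq N}$; the compatibility relations $\partial R_{j,N}/\partial\bar z_i=\partial R_{i,N}/\partial\bar z_j$ follow from $L_i a_{j\ell}=L_j a_{i\ell}$ (taken at the relevant degree) together with the inductive hypothesis, so that $(g_\ell)_{N+1}$ exists by the $\bar\partial$-Poincar\'e lemma for polynomials on $\mathbb{C}^n$, with $s$ as parameters. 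The recursion is started with $(g_\ell)_0=0$ and $(g_\ell)_1=s_\ell$. Then $(z_1,\dots,z_n,g_1,\dots,g_k)$ is a formal $CR$-chart, since its differentials at $p_0$ are $dz_1,\dots,dz_n,ds_1,\dots,ds_k$, which are independent.

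To finish, apply Borel's theorem: after shrinking $U$, pick $\phi_\ell\in\mathcal{C}^{\infty}(U)$ with infinite jet $g_\ell$ at $p_0$ and $\phi_\ell=s_\ell+O(|z|^2+|s|^2)$, and set $\Phi=(z_1,\dots,z_n,\phi_1,\dots,\phi_k):U\to\mathbb{C}^{n+k}$. As $d\Phi(p_0)$ is injective and the matrix $(\partial\phi_\ell/\partial s_m)$ is invertible near $p_0$, for $U$ small enough $\Phi$ is an embedding of $U$ onto an embedded $CR$ submanifold of $\mathbb{C}^{n+k}$ of type $(n,k)$; let $\Zt'$ be the pullback under $\Phi$ of its induced $CR$ structure. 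Then $z_1,\dots,z_n,\phi_1,\dots,\phi_k$ are smooth $CR$ functions for $\Zt'$ with everywhere independent differentials on $U$, i.e.\ $\Zt'$ is covered by a single $CR$-chart and so is locally $CR$-embeddable. Moreover $\Zt'$ has a frame $L'_j=\partial/\partial\bar z_j+\sum_\ell a'_{j\ell}\,\partial/\partial s_\ell$ (there are no $\partial/\partial z_m$-terms, because the $z_m$ are $CR$ for $\Zt'$), where $a'_{j\ell}$ is recovered from $\phi$ by solving the linear system coming from $L'_j\phi_\ell=0$, whose matrix $(\partial\phi_\ell/\partial s_m)$ is invertible; passing to infinite jets at $p_0$ and using $\hat L_j g_\ell=0$ one checks that the infinite jet of $a'_{j\ell}$ is $\hat a_{j\ell}$, the infinite jet of $a_{j\ell}$. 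Hence $\Zt'$ agrees to infinite order with $\Zt$ at $p_0$.

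The point that needs real work is the second paragraph: one must verify that the compatibility conditions $\partial R_{j,N}/\partial\bar z_i=\partial R_{i,N}/\partial\bar z_j$ really do follow, at every stage of the induction, from $L_i a_{j\ell}=L_j a_{i\ell}$ together with the lower-order equations, and then invoke the polynomial $\bar\partial$-Poincar\'e lemma on $\mathbb{C}^n$ to produce $(g_\ell)_{N+1}$; in cohomological terms this is the vanishing in positive degree of the $\bar\partial$-Koszul complex of $\mathbb{C}^n$. The remaining ingredients are routine: the normal form is classical, Borel's theorem supplies $\phi$, and infinite jets are easily tracked along the algebraic correspondence $\phi\leftrightarrow a'$.
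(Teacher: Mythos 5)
Your argument has a genuine gap at the very first step, and it is not a cosmetic one. The normal form you call classical, namely a frame $L_j=\partial/\partial\bar z_j+\sum_{\ell} a_{j\ell}\,\partial/\partial s_\ell$ with no $\partial/\partial z_m$-components on a whole neighbourhood, is not achievable for a general abstract formally integrable $CR$ structure. Diagonalising the $\partial/\partial\bar z$-part of a frame and making linear changes of coordinates only yields $L_j=\partial/\partial\bar z_j+\sum_m b_{jm}\,\partial/\partial z_m+\sum_\ell a_{j\ell}\,\partial/\partial s_\ell$ with $b_{jm},a_{j\ell}$ vanishing at $p_0$; killing the $b_{jm}$ identically is equivalent to the coordinate functions $z_1,\dots,z_n$ being $CR$ functions of the given structure, i.e.\ to a strong partial embeddability statement. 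This fails in general: for type $(1,1)$ there are Nirenberg-type structures (exactly the kind this paper constructs) admitting no nonconstant $CR$ functions at all, so no such frame exists; and for $k=0$ your normal form would amount to deducing the Newlander--Nirenberg theorem by linear algebra. The reduction is therefore circular: it grants you, exactly and smoothly, $n$ of the $n+k$ independent $CR$ functions which the lemma is only supposed to provide formally.

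The repair is to keep the first-order-vanishing terms in all remaining directions --- this is the normal form the paper actually uses, $Z_i=L_i+R_i$ with $L_i$ the model Cauchy--Riemann operators and $R_i$ a perturbation whose coefficients lie in $\mathfrak{m}$ --- and to run your degree-by-degree construction for all $n+k$ formal first integrals, with linear parts $z_1,\dots,z_n$ as well as $s_1,\dots,s_k$, not only for the $k$ functions with linear part $s_\ell$ (you obtained the other $n$ for free only because of the unjustified normal form). Your observation that involutivity forces $[L_i,L_j]=0$ survives in the corrected setting, since the brackets still have no $\partial/\partial\bar z$-component; the compatibility of the right-hand sides at each degree again comes from these commutation relations (this is precisely the computation the paper carries out and which you flag as ``needing real work''; it should be written out), and the polynomial $\bar\partial$-lemma, the Borel realization, and the pullback of the ambient structure of $\mathbb{C}^{n+k}$ then go through as you describe. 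At that point your proof coincides with the paper's.
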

\begin{proof} Let $\Zt$ be the $CR$ structure on $M$. 
It suffices to consider smooth functions $z_1,\hdots,z_{\nuup}$
which are defined on a neighborhood of $p_0$, satisfy
$Zz_j=0^{\infty}$ at $p_0$, and have $dz_1(p_0)\wedge\cdots\wedge
dz_{\nuup}(p_0)\neq{0}$.  
To prove the existence of such functions, we observe that it is
always possible to find a smooth coordinate chart 
$(U,x_1,\hdots,x_m)$ centered at $p_0$ such that $\Zt$ is generated
in $U$ by vector fields of the form
\begin{equation*}
  Z_i=\dfrac{\partial}{\partial{x}_i}+i\dfrac{\partial}{\partial{x}_{i+n}}+
{\sum}_{j=n+1}^ma_j(x)\dfrac{\partial}{\partial{x}_j},\quad
\text{with $a_j(x)=O(|x|)$}.
\end{equation*}
Let $L_i=\dfrac{\partial}{\partial{x}_i}+i\dfrac{\partial}{\partial{x}_{i+n}}$,
and $R_i={\sum}_{j=n+1}^ma_j(x)\dfrac{\partial}{\partial{x}_j}$. \par
We denote by $\mathfrak{m}$ the maximal ideal of 
the local
ring $\mathbb{C}\{\{x_1,\hdots,x_m\}\}$ of formal power series of
$x_1,\hdots,x_m$. We obtain
formal power series solution 
to \eqref{eq:d1} by constructing by recurrence 
sequences $\{f_h\}_{h\geq{0}}\subset\mathbb{C}\{\{x_1,\hdots,x_m\}\}$ 
which  solve the equations
\begin{equation}\tag{$*$} \label{star}
  \begin{cases}
f_h\in\mathfrak{m}^h,\\
    L_jf_1\in\mathfrak{m}, &\text{for $j=1,\hdots,n$},\\
L_jf_{h+1}+R_jf_h\in\mathfrak{m}^{h+1},&\text{for $j=1,\hdots,n$}.
  \end{cases}
\end{equation}
We observe that, taking $f_1$ equal to 
$x_i+ix_{i+n}$ for $i=1,\hdots,n$, or to 
$x_{2n+i}$, for $i=1,\hdots,k$, we obtain $\nuup$ independent solutions
of $L_if_1=0$ for $1\leq{i}\leq{n}$. \par
Assume now that $d\geq{1}$ and 
$f_d\in\mathfrak{m}^d$ satisfies
\begin{equation*}
  L_if_d+R_if_{d-1}\in\mathfrak{m}^d,\quad\text{for}\quad 1\leq{i}\leq{n}.
\end{equation*}
The integrability conditions yield
$[Z_i,Z_j]=0$ for $1\leq{i,j}\leq{n}$. Hence we obtain
\begin{equation}\tag{$**$} \label{sstar}
  0=[Z_i,Z_j]f_d=-L_iR_jf_d+L_jR_if_d+[R_i,R_j]f_d.
\end{equation}
We have $R_if_d\in\mathfrak{m}^d$, and hence there is a 
polynomial $g_{i,d}\in\mathbb{C}[x_1,\hdots,x_m]$, 
homogeneous of degree $d$, such that
$R_if_d-g_{i,d}\in\mathfrak{m}^{d+1}$. 
Since $[R_i,R_j]f_d\in\mathfrak{m}^{d+1}$,  
we obtain from \eqref{sstar} that
$L_ig_{j,d}=L_jg_{i,d}$ for all $1\leq{i,j}\leq{n}$
and therefore there is a polynomial $f_{d+1}\in\mathbb{C}[x_1,\hdots,x_m]$, 
homogeneous of
degree $d+1$, such that $L_if_{d+1}=g_{i,d}$ for $i=1,\hdots,n$.
The series ${\sum}f_d$ of the terms of a sequence
$\{f_d\}$ solving \eqref{star} is a formal power series solution
of \eqref{eq:d1}. \par
In particular, we can find solutions
$\{z_1\},\hdots,\{z_{\nuup}\}\in\mathbb{C}\{\{x_1,\hdots,x_m\}\}$ 
to \eqref{eq:d1} with $d\{z_i\}(0)=dx_i(0)+idx_{i+n}(0)$ for $i=1,\hdots,n$
and $d\{z_i\}(0)=dx_{n+i}(0)$ for $i=n+1,\hdots,\nuup$. It suffices then
to take smooth functions $z_1,\hdots,z_{\nuup}$ having Taylor series
$\{z_1\},\hdots,\{z_{\nuup}\}$ at $0$.
\end{proof}

\subsection{Characteristic bundle and Levi form}
\cite{N84}
The underlying real distribution and the characteristic bundle
of ${\Zt}$ are:
\begin{align}\label{eq:d6}
  &\mathcal{H}=\{\mathrm{Re}\,Z\mid Z\in{\Zt}\},\quad
\text{i.e}\quad 
\mathcal{H}(U)=\{\mathrm{Re}\,Z\mid Z\in{\Zt}(U)\},
\;\forall U^{\mathrm{open}}\subset{M},
\\\label{eq:d7}
& H^0M =\{\xi\in{T}^*M\mid \xi(X)=0,\;\forall X\in
\mathcal{H}(M)\}.
\end{align}
To each characteristic covector $\xi_0\in{H}^0_{p_0}M$
we associate a
Hermitian symmetric form on $\mathbf{Z}_{p_0}M$, by
\begin{equation}
  \label{eq:d8}
  \mathfrak{L}_{\xi_0}(Z_1,Z_2)=i\xi_0([Z_1,\bar{Z}_2]),
\quad\forall Z_1,Z_2\in{\Zt}(M).
\end{equation}
In fact a straightforward verification shows that the value of the
right hand side of \eqref{eq:d8} only depends  
on $Z_1(p_0),Z_2(p_0)\in\mathbf{Z}_{p_0}M$. \par
Moreover, $\mathfrak{L}_{\xi_0}(Z_1,Z_2)=0$ if 
one of the two vector fields is real valued
on a neighborhood of $p_0$. 
Thus $\mathfrak{L}_{\xi_0}$ 
defines a Hermitian symmetric
form on the quotient of $\mathbf{Z}_{p_0}M$ by 
the subspace
$\mathbf{N}_{p_0}M=\{Z(p_0)\mid Z\in{\Zt}(M)\cap\overline{{\Zt}(M)}\}$,
consisting of the values at $p_0$ of the complex multiples of the
real vector fields
in ${\Zt}(M)$. 
Set
\begin{equation}
  \label{eq:d9}
  \check{\mathbf{Z}}_{p_0}M=
\mathbf{Z}_{p_0}M/\mathbf{N}_{p_0}M.
\end{equation}
If $\xi_0\in{H}^0_{p_0}M$, then \eqref{eq:d8} defines a Hermitian symmetric form
$\mathbf{L}_{\xi_0}$ on $\check{\mathbf{Z}}_{p_0}M$, that we call
the \emph{Levi form} of ${\Zt}$ at $\xi_0$.
\begin{dfn} Let $p_0\in{M}$ and $\xi_0\in{H}^0_{p_0}M$.
We say that ${\Zt}$ is $q$-pseudoconvex at $\xi_0$
if $\mathbf{L}_{\xi_0}$ 
is nondegenerate 
 and has 
exactly $q$ positive eigenvalues on $\check{\mathbf{Z}}_{p_0}M$. \par
If ${\Zt}$ is $1$-pseudoconvex at 
some $\xi_0\in{H}^0_{p_0}M$, we say that
${\Zt}$ is \emph{Lorentzian} at $p_0$.
\end{dfn}
If ${\Zt}(M)$ is generated by a single
vector field $L$ near $p_0$, the condition of being Lorentzian at
$p_0$ means that $L(p_0)$, $\bar{L}(p_0)$, and $[L,\bar{L}](p_0)$ 
are linearly independent in $\mathbb{C}T_{p_0}M$. 
\subsection{Reduction of complete integrability
to the case of $CR$ manifolds}
When $\mathbf{N}_pM$ has constant dimension on a neighborhood $U$ of
$p_0\in{M}$, then the real vector fields in $\Zt(U)$ 
define an involutive 
distribution $\mathscr{N}$ 
of \textit{real} vector fields on $U$. By the Frobenius theorem, there is 
an open  neighborhood $W$ of $p_0$ in $U$ and a smooth fibration
$\piup:W\to{B}$ of $W$ such that $B$ is a smooth manifold and
the fibers of $\piup$ are integral submanifolds of $\mathscr{N}$.
One easily proves 
\begin{lem}
 There is a $CR$ structure $\Zt'$ on $B$ such that for every $p\in{W}$ 
 we have $\Ot_{\! M,(p)}=\piup^*\Ot_{\! B,(\piup(p))}$, and $\Zt$ is completely
 integrable at $p\in{W}$ if and only if $\Zt'$ is completely integrable at 
 $\piup(p)$.
\end{lem}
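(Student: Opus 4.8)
The plan is to push the involutive distribution $\Zt$ forward along the submersion $\piup$. Since the fibres of $\piup$ are the leaves of $\mathscr{N}$, the kernel of $d\piup$ is $\mathscr{N}$, whose complexification is $\mathbf{N}M\subset\Zt$; hence $d\piup$ identifies $\mathbb{C}TW/\mathbf{N}M$ with $\piup^{*}(\mathbb{C}TB)$. For an open set $V\subset B$ I would let $\Zt'(V)$ be the set of complex vector fields $Z'$ on $V$ that are $\piup$-related to some $Z\in\Zt(\piup^{-1}V)$, i.e.\ $d\piup(Z_{q})=Z'_{\piup(q)}$ for every $q\in\piup^{-1}V$. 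Multiplying a lift by $\piup^{*}f$, adding lifts, patching with a partition of unity on $B$, and restricting show that $\Zt'$ is a sheaf of $\mathcal{C}^{\infty}$-submodules of the complex vector fields on $B$, and it is involutive: if $Z'_{1},Z'_{2}\in\Zt'(V)$ have lifts $Z_{1},Z_{2}\in\Zt$, then $[Z_{1},Z_{2}]\in\Zt$ is $\piup$-related to $[Z'_{1},Z'_{2}]$.

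The key step is that $\Zt'$ is a distribution of rank $\dim_{\mathbb{C}}\mathbf{Z}_{p}M-\dim_{\mathbb{C}}\mathbf{N}_{p}M$. To see this I would note that the bundle $\mathbb{C}TW/\mathbf{N}M\cong\piup^{*}(\mathbb{C}TB)$ carries a canonical flat partial connection $\nabla$ along the fibres of $\piup$, sending the class of $Z$ to the class of $[N,Z]$ for $N$ a real section of $\mathscr{N}$; it is well defined and flat precisely because $\mathscr{N}$ is involutive, and its sections flat along the fibres are exactly the pull-backs of vector fields on $B$. The integrability condition $[\mathscr{N},\Zt]\subset\Zt$ says that $\Zt/\mathbf{N}M$ is a $\nabla$-invariant sub-bundle of $\piup^{*}(\mathbb{C}TB)$; after shrinking $W$ so that the fibres of $\piup$ are balls, such a sub-bundle is spanned fibrewise by flat sections, hence is of the form $\piup^{*}\mathcal{D}$ for a sub-bundle $\mathcal{D}\subset\mathbb{C}TB$. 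This $\mathcal{D}$ is the pointwise evaluation of $\Zt'$, which is therefore a distribution of rank $\rank\mathcal{D}$; concretely the argument amounts to integrating the linear system $\partial_{t_{\alpha}}c=P_{\alpha}c$ satisfied by the $\partial_{s}$-components of a frame of $\Zt$ in a foliated chart. Finally $\Zt'\cap\overline{\Zt'}=0$: if $Z\sim_{\piup}Z'$ and $Z''\sim_{\piup}\overline{Z'}$ with $Z,Z''\in\Zt$, then $\overline{Z}-Z''$ is $\piup$-related to $0$, hence a section of $\mathbf{N}M$, so $\overline{Z}\in\Zt$; thus $Z$ takes values in $\mathbf{N}M=\ker(d\piup)\otimes_{\mathbb{R}}\mathbb{C}$ and $Z'=0$. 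Hence $\Zt'$ is a $CR$ structure on $B$. This descent of $\Zt/\mathbf{N}M$ to $B$ is the only point where involutivity of $\Zt$ is used in an essential, non-formal way, and I expect it to be the main obstacle; the remaining steps are bookkeeping with $\piup$-related fields and pull-backs.

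Next I would identify the structure sheaves. If $v\in\Ot_{\! B,(\piup(p))}$ and $Z\in\Zt$ is defined near $p$, write $Z=\widetilde{Z}+N$ with $N$ vertical and $\widetilde{Z}\sim_{\piup}Z'$ for some $Z'\in\Zt'$; then $Z(\piup^{*}v)=\piup^{*}(Z'v)+N(\piup^{*}v)=0$, so $\piup^{*}v\in\Ot_{\! M,(p)}$. Conversely, if $u\in\Ot_{\! M,(p)}$ then $Nu=0$ for every real $N\in\mathscr{N}$, so $u$ is constant along the fibres of $\piup$ near $p$; since $\piup$ is a submersion this gives $u=\piup^{*}v$ for a $\mathcal{C}^{1}$ function $v$ near $\piup(p)$, and for $Z'\in\Zt'$ with lift $Z\in\Zt$ one has $\piup^{*}(Z'v)=Z(\piup^{*}v)=Zu=0$, so $v\in\Ot_{\! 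B,(\piup(p))}$. This proves $\Ot_{\! M,(p)}=\piup^{*}\Ot_{\! B,(\piup(p))}$.

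Finally, for complete integrability, the inclusion $\mathbf{N}_{p}M\subset\mathbf{Z}_{p}M$ forces $\mathbf{Z}^{0}_{p}M\subset(\ker d\piup_{p})^{0}=\piup^{*}(\mathbb{C}T^{*}_{\piup(p)}B)$; writing $\eta=\piup^{*}\eta'$, membership $\eta\in\mathbf{Z}^{0}_{p}M$ is equivalent to $\eta'$ annihilating $d\piup_{p}(\mathbf{Z}_{p}M)=\mathcal{D}_{\piup(p)}$, so $\eta\mapsto\eta'$ is a linear isomorphism of $\mathbf{Z}^{0}_{p}M$ onto $(\mathbf{Z}')^{0}_{\piup(p)}B$. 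Since $u=\piup^{*}v$ gives $du(p)=\piup^{*}(dv(\piup(p)))$ and $\piup^{*}$ is injective on covectors, the existence of $u\in\Ot_{\! M,(p)}$ with $du(p)=\eta$ is equivalent, through $\Ot_{\! M,(p)}=\piup^{*}\Ot_{\! B,(\piup(p))}$, to the existence of $v\in\Ot_{\! B,(\piup(p))}$ with $dv(\piup(p))=\eta'$. As $\eta$ ranges over $\mathbf{Z}^{0}_{p}M$ exactly when $\eta'$ ranges over $(\mathbf{Z}')^{0}_{\piup(p)}B$, this says precisely that $\Zt$ is completely integrable at $p$ if and only if $\Zt'$ is completely integrable at $\piup(p)$.
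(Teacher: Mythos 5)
The paper offers no argument for this lemma---it is introduced with ``One easily proves'' and stated without proof---so there is no proof of record to compare yours against; judged on its own, your argument is correct and supplies exactly the missing details. Defining $\Zt'$ by $\piup$-relatedness, descending $\Zt/\mathbf{N}M$ to a subbundle $\mathcal{D}\subset\mathbb{C}TB$ by means of the flat Bott-type partial connection along the fibres (equivalently, integrating the linear system in the fibre variables, whose compatibility is precisely the flatness you verify), checking $\Zt'\cap\overline{\Zt'}=0$, and then transporting both the structure sheaves and the characteristic covectors through $\piup^*$ is the natural route; in particular your reduction of the complete-integrability equivalence to the isomorphism $\piup^*\colon(\mathbf{Z}')^0_{\piup(p)}B\to\mathbf{Z}^0_pM$ together with $\Ot_{\! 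M,(p)}=\piup^*\Ot_{\! B,(\piup(p))}$ is exactly what is needed, and you correctly use (as the statement tacitly does) that $\Zt$ and $\mathbf{N}M$ have constant rank near $p_0$.

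One small repair: the sentence where you ``write $Z=\widetilde{Z}+N$ with $N$ vertical and $\widetilde{Z}\sim_{\piup}Z'$'' claims more than is available, since an arbitrary $Z\in\Zt$ need not split as a projectable field plus a vertical field (already $f(t)\partial_s$ on a product $\mathbb{R}_s\times\mathbb{R}_t\to\mathbb{R}_s$ admits no such splitting). The conclusion you want is immediate without it: $Z(\piup^*v)(q)=dv_{\piup(q)}\bigl(d\piup(Z_q)\bigr)$, and $d\piup(Z_q)\in\mathcal{D}_{\piup(q)}$ while $dv$ annihilates $\mathcal{D}$ near $\piup(p)$ because $v$ kills every section of $\Zt'$; alternatively, write $Z={\sum}_jf_jZ_j+N$ with $Z_j\in\Zt$ projectable (such a frame modulo $\mathbf{N}M$ exists by your descent step) and $f_j$ smooth, and apply it to $\piup^*v$ term by term. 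With that adjustment the proof is complete.
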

\section{Involutive systems which are not completely
integrable at $p_0$}\label{sec:3x}
In this section, we 
give a weak generalization 
 of the results of \S\ref{sec:b} 
to Lorentzian $CR$ manifolds $M$ with arbitrary $CR$-codimension $k\geq{1}$
and $CR$-dimension $n\geq{2}$. We recall that 
$m=\dim_{\mathbb{R}}M=2n+k$, and we set $\nuup=n+k$.\par
We closely follow the arguments of \S\ref{sec:b}.\par
Assume that $M$ is locally $CR$-embeddable and Lorentzian at $p_0$.  
 Then there is
a $CR$-chart  $(U,z_1,\hdots,z_{\nuup})$ centered at $p_0$, with 
$dz^i(p_0)$ real for $i=n+1,\hdots,\nuup$, and 
\begin{equation}
  \label{eq:51a}
  \im{z}_{\nuup}+z_{\nuup}\bar{z}_{\nuup}+
{\sum}_{i=1}^{n-1}z_j\bar{z}_j={\sum}_{i=n}^{\nuup-1}z_j\bar{z}_j
+O(|z|^3)\quad\text{on $U$}.
\end{equation}
By shrinking, we get 
${\sum}_{i=n}^{\nuup-1}
z_i\bar{z}_i\geq \tfrac{1}{2}\im{z}_{\nuup} \;$ on $U$.\par
We consider the map $\piup:U\ni{p}\to w=(z_{1}(p),\hdots,z_{n-1}(p),z_{\nuup}(p))
\in\mathbb{C}^n$. By a further shrinking, we can assume 
that there is an open ball $B\subset\mathbb{C}^n$, centered at $0$, 
such that
\begin{list}{-}{}
\item $\piup(U)=\bar{\omega}$, with $B\setminus\omega$ 
strictly convex, 
and $\partial\omega\cap{B}$ smooth;
\item if $\im{\tauup}\geq{0}$, 
then $\{w\in{B}\mid \im{w}_n=\tauup\}\subset\omega$; 
\item for all $w\in\omega$ the set $M_{w}=\piup^{-1}(w)$ is diffeomorphic
to the sphere $S^k$;
\item for $w\in\partial\omega\cap{B}$ the set $M_{w}=\piup^{-1}(w)$ is a point. 
\end{list}
As in \S\ref{sec:b}, we have:
\begin{lem}\label{lem51}
If $u\in\Otm(U)$, then
\begin{equation}
  \label{eq:52}
  F(w)=\int_{M_w}u\, dz_n\wedge\cdots\wedge{dz}_{\nuup-1}=0,\;\;
\forall w\in\omega.
\end{equation}
\end{lem}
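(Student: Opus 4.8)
The plan is to follow step by step the proof of Lemma~\ref{lem:b2}: show that $F$ is holomorphic on $\omega$, that it extends continuously by $0$ across the boundary piece $\partial\omega\cap B$, and then invoke a one–dimensional boundary uniqueness argument. The algebraic input that here replaces the identity ``$du\wedge dz_0\wedge\cdots\wedge dz_k=0$'' of \S\ref{sec:b} is the remark that, for $u\in\Otm(U)$, the differential $du$ annihilates $\mathbf{Z}_pM=T^{0,1}_pM$ at every point $p$; since $dz_1(p),\hdots,dz_{\nuup}(p)$ are $\nuup$ linearly independent covectors also annihilating the $n$-dimensional space $T^{0,1}_pM$, and the annihilator of $T^{0,1}_pM$ in $\mathbb{C}T^*_pM$ has dimension $(2n+k)-n=\nuup$, the $dz_i(p)$ span that annihilator, so $du(p)\in\mathrm{span}_{\mathbb{C}}(dz_1(p),\hdots,dz_{\nuup}(p))$ and therefore
\[
 du\wedge dz_1\wedge\cdots\wedge dz_{\nuup}=0\qquad\text{on }U
\]
(see also \cite{hl56}). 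More generally, on any submanifold of $U$ on which some of the $z_i$ are locally constant, $du$ lies in the span of the differentials of the remaining $z_i$.

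To prove that $F$ is holomorphic on $\omega$ I would argue one variable at a time, as in \S\ref{sec:b}. Write $w=(w_1,\hdots,w_n)$ with $\piup^*w_i=z_i$ for $i<n$ and $\piup^*w_n=z_{\nuup}$; after the shrinkings already made we may assume that $\piup$ restricts to a proper submersion $\piup^{-1}(\omega)\to\omega$, hence a smooth fibre bundle with compact boundaryless fibres $M_w\simeq S^k$. Fix $j\in\{1,\hdots,n\}$, freeze the coordinates $w_i$ for $i\neq j$, and let $D$ be a small closed disc in the resulting plane slice of $\omega$. Then $N=\piup^{-1}(D)$ is a compact $(k+2)$-manifold with $\partial N=\piup^{-1}(\partial D)$, and Fubini followed by Stokes gives
\[
 \oint_{\partial D}F\,dw_j=\pm\int_{\partial N}u\,dz_n\wedge\cdots\wedge dz_{\nuup-1}\wedge dz_\ast=\pm\int_{N}du\wedge dz_n\wedge\cdots\wedge dz_{\nuup-1}\wedge dz_\ast,
\]
where $dz_\ast=dz_j$ if $j<n$ and $dz_\ast=dz_{\nuup}$ if $j=n$. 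On $N$ every $z_i$ whose index is a frozen component of $w$ is constant, so its differential vanishes on $N$; by the remark above $du|_N$ then lies in the span of the $k+1$ one-forms $dz_\ast,dz_n,\hdots,dz_{\nuup-1}$, and the last integrand vanishes identically on $N$. Hence $\oint_{\partial D}F\,dw_j=0$ for all such discs, Morera's theorem shows that $F$ is separately holomorphic, and since $F$ is continuous — being the integral of a $\mathcal{C}^1$ form over a smooth family of compact fibres — Osgood's lemma gives that $F$ is holomorphic on $\omega$.

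It remains to normalise the boundary values and conclude. For $w\in\partial\omega\cap B$ the fibre $M_w$ is a single point while $dz_n\wedge\cdots\wedge dz_{\nuup-1}$ is a $k$-form with $k\geq1$, so $F(w)=0$; and $F(w)\to0$ as $w\to\partial\omega\cap B$ because the fibres $M_w$ shrink to points and $u$ is bounded, so $F$ extends continuously by $0$ to $\omega\cup(\partial\omega\cap B)$. Then $F$ is holomorphic on the connected open set $\omega$, continuous up to the nonempty smooth real hypersurface piece $\partial\omega\cap B$, and vanishes there; exactly as in \S\ref{sec:b} this forces $F\equiv0$, by slicing with complex lines transverse to $\partial\omega\cap B$ — on each such line $F$ restricts to a holomorphic function on a one-sided plane domain vanishing on a boundary arc, hence vanishing identically by the Schwarz reflection principle — and then applying the identity theorem.

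I expect the point needing most care to be the holomorphy of $F$: one must be sure that $\piup$ really is a fibre bundle over $\omega$, so that $N$ is a manifold with boundary and Stokes applies with no extra contributions (the fibres being closed), and that the ``vanishing differentials'' bookkeeping correctly pins $du|_N$ inside the $(k+1)$-dimensional span. The boundary-value normalisation and the final reflection argument are then a direct transcription of the corresponding parts of the proof of Lemma~\ref{lem:b2}.
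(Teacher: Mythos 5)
Your proposal is correct and its core is the same as the paper's: the identity $du\wedge dz_1\wedge\cdots\wedge dz_{\nuup}=0$ forced by $u\in\Otm(U)$, Stokes' theorem on preimages under $\piup$, the degeneration of the fibres $M_w$ to points along $\partial\omega\cap{B}$, and a uniqueness argument to conclude $F\equiv 0$. You diverge from the paper at two points. For the holomorphy of $F$, the paper takes a full closed polycylinder $D\subset\omega$, uses the contracted forms $\gammaup_j=\tfrac{\partial}{\partial\bar{w}_j}\rfloor(d\bar{w}_1\wedge\cdots\wedge d\bar{w}_n)$, applies Stokes on the $(2n+k)$-dimensional set $\piup^{-1}(D)$ (noting the integrand vanishes on the faces $N_i$, $i\neq j$), and concludes by a several-variable Morera-type criterion; you instead freeze $n-1$ of the $w$-variables, apply Stokes on the $(k+2)$-dimensional preimage of a disc in the remaining variable, get separate holomorphy from the classical Morera theorem, and finish with continuity of $F$ plus Osgood's lemma. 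These are equivalent in substance; your version needs the (correctly supplied) bookkeeping that the differentials of the frozen $z_i$ restrict to zero on the slice, so that $du$ restricted to the slice lies in the span of $dz_n,\hdots,dz_{\nuup-1},dz_\ast$. For the final step the paper invokes Holmgren's uniqueness theorem for the constant-coefficient operator $\bar\partial$, while you slice by complex lines transverse to $\partial\omega\cap{B}$ and use one-variable boundary uniqueness; this is more elementary, but note that the literal Schwarz reflection principle requires an analytic arc (or real boundary values on a straight segment), whereas $\partial\omega\cap{B}$ is only smooth. The cleanest repair inside your scheme is to extend $F$, restricted to the line, by $0$ across the arc (possible since $F\to 0$ there and the other side of the arc lies in $B\setminus\omega$), observe that the glued continuous function is holomorphic across a $\mathcal{C}^1$ arc by Morera/removability, and then apply the identity theorem; with that adjustment, and the connectedness of $\omega$ (here $n\geq 2$), your argument gives $F\equiv 0$ on $\omega$ just as the paper's does.
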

\begin{proof}
We prove first that $F$ is holomorphic on $\omega$. \par
Fix any polycylinder $D=D_1\times\cdots\times{D}_n$ in $\omega$,
with $D_i=\{\tau\in\mathbb{C}\mid |\tau-\tau_i|\leq\epsilon_i\}$.
For $1\leq{j}\leq{n}$ we set $\partial_j(D)=\{w\in{D}\mid |w_j-\tau_j|=\epsilon_j\}$,
$\gammaup_j=\dfrac{\partial}{\partial{\bar{w}_j}}\rfloor (d\bar{w}_1\wedge\cdots
\wedge{d}\bar{w}_n)$
and consider the integral 
\begin{align*}
  &\oint_{\partial_jD}F(w)dw_1\wedge\cdots\wedge{dw}_n\wedge\gammaup_j
=
\oint_{\partial_jD}dw_1\wedge\cdots\wedge{dw}_n\wedge\gammaup_j
\int_{M_w}u \,dz_n\wedge\cdots\wedge{dz}_{\nuup-1}.
\end{align*}
Let $N_i=\piup^{-1}(\partial_iD)$ and $N=\piup^{-1}(D)$. We have
$\partial{N}={\sum}_{i=1}^n\pm{N}_i$. Moreover, the form 
$u\, dz_1\wedge\cdots\wedge{dz_{\nuup}}\wedge\piup^*\gammaup_j$ is zero on
$N_i$ for $i\neq{j}$. Thus we obtain:
\begin{align*}
& \oint_{\partial_jD}F(w)dw_1\wedge\cdots\wedge{dw}_n
\wedge\gammaup_j
=\pm \int_{N_j}u\, dz_1\wedge\cdots\wedge{dz_{\nuup}}\wedge 
\piup^*\gammaup_j\\
&\quad 
={\sum}_{i=1}^n
\int_{{N}_{i}}\pm \,u \, dz_1\wedge \cdots{dz}_{\nuup}\wedge \piup^*\gammaup_j 
=\pm
\int_{{N}}du\wedge dz_1\wedge \cdots{dz}_{\nuup}\wedge \piup^*\gammaup_j
=0
\end{align*}
because $du\in\langle dz_1,\hdots, dz_{\nuup}\rangle$ by the assumption
that $u\in\Otm(U)$. This equality, valid for all closed polycylinder $D$
in $\omega$ and all $1\leq{j}\leq{n}$, implies that $F$ is holomorphic in
$\omega$. 
Clearly $F(w)\to{0}$ when $w\to\partial\omega\cap{B}$, because 
$M_{w_0}$ is a point for \mbox{$w_0\in\partial\omega\cap{B}$},
and hence $F=0$ on $\omega$ by Holmgren's uniqueness theorem,
since $\bar\partial$ has constant coefficients in $\mathbb{C}^n$.
\end{proof}

Let $\psi$ be a smooth function with compact support in $\mathbb{C}$,
and set
\begin{equation*}
  \hat{\psi}(\tauup)=\frac{1}{2\pi{i}}\iint \dfrac{\psi(\zetaup)
d\zetaup\wedge{d}\bar{\zetaup}}{\zetaup-\tauup}.
\end{equation*}
Then $\dfrac{\partial\hat{\psi}}{\partial\bar{\tauup}}=\psi$ and
therefore \begin{equation*}
\psi^{\sharp}(\zetaup,\tauup)=
\bar{\zetaup}\psi(\tauup)d\bar{\tauup}+\hat{\psi}(\tauup)d\bar\zetaup
=\bar{\partial}(\bar{\zetaup}\,\hat{\psi}(\tauup))
\end{equation*}
is a $\bar\partial$-closed form in $\mathbb{C}^2$, with
\begin{equation*}
d\psi^{\sharp}=\dfrac{\partial{\hat{\psi}(\tauup)}}{\partial{\tauup}}d\tauup
\wedge d\bar{\zetaup}
+\bar{\zetaup}\dfrac{\partial\psi(\tauup)}{\partial\tauup}d\tauup\wedge{d}\bar\tauup
=d\tauup\wedge \dfrac{\partial}{\partial\tauup}\psiup^{\sharp}.
\end{equation*}

\begin{lem}\label{lem52} Let $U'\Subset{U}$. 
If $\psi_i$, for $i=1,\hdots,\nuup-1$, are smooth functions of a complex
variable $\tauup$, with $|\psi_i|$ sufficiently small.  
Then
\begin{equation}
  \label{eq:53}
  \thetaup_1=dz_1+\psi_1^{\sharp}(z_n,z_{\nuup}),\;\hdots,\;
\thetaup_{\nuup-1}=d{z}_{\nuup-1}+\psi_{\nuup-1}^{\sharp}(z_n,z_{\nuup}),
\; \thetaup_{\nuup}=dz_\nuup
\end{equation}
generate the involutive ideal sheaf $\Jt'_M$
of a $CR$ structure of type $(n,k)$ on $U'$.
\end{lem}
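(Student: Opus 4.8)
The plan is to verify the three properties that, by the definitions of Section~\ref{sec:d}, make a pointwise $\mathbb{C}$‑linearly independent family of $1$‑forms the degree‑one generators of the differential ideal of a $CR$ structure: (a)~that $\thetaup_1,\dots,\thetaup_{\nuup}$ are $\mathbb{C}$‑linearly independent at every point of $U'$, so that their common characteristic system $\Zt'=\{Z\mid\thetaup_i(Z)=0,\ i=1,\dots,\nuup\}$ is a distribution of rank $m-\nuup=n$ and $\Jt'_M=\langle\thetaup_1,\dots,\thetaup_{\nuup}\rangle$ is its differential ideal; (b)~that $\Jt'_M$ is closed under~$d$; and (c)~that $\Zt'\cap\overline{\Zt'}=0$. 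Granting these, $\Zt'$ is a $CR$ structure of $CR$ dimension $n$ and $CR$ codimension $m-2n=k$, i.e.\ of type $(n,k)$, which is the assertion. Only (a) and (c) will use the smallness of $|\psi_i|$.

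Property (b) is the conceptual heart, and it holds as an exact identity. One observes that $\psi_i^{\sharp}(z_n,z_{\nuup})$ is the pullback of the $1$‑form $\psi_i^{\sharp}$ on $\mathbb{C}^2$ under the smooth map $p\mapsto(z_n(p),z_{\nuup}(p))$, that pullback commutes with $d$, and that by the computation preceding the statement $d\psi_i^{\sharp}=d\tauup\wedge(\partial\psi_i^{\sharp}/\partial\tauup)$ on $\mathbb{C}^2$. Hence, for $1\le i\le\nuup-1$,
\begin{equation*}
  d\thetaup_i=dz_{\nuup}\wedge\beta_i,\qquad
  \beta_i=\bar z_n\,(\partial\psi_i/\partial\tauup)(z_{\nuup})\,d\bar z_{\nuup}
  +(\partial\hat\psi_i/\partial\tauup)(z_{\nuup})\,d\bar z_n,
\end{equation*}
and $\beta_i$ is a smooth $1$‑form on $U'$, the Cauchy transform $\hat\psi_i$ and its derivatives being smooth. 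Since $dz_{\nuup}=\thetaup_{\nuup}$ is itself one of the generators, $d\thetaup_i=-\beta_i\wedge\thetaup_{\nuup}\in\Jt'_M$, while $d\thetaup_{\nuup}=0$; so $\Jt'_M$ is a differential ideal. This is exactly the point of taking $\psi^{\sharp}=\bar\partial(\bar\zetaup\,\hat\psi(\tauup))$: it is $\bar\partial$‑closed and its total differential is a multiple of $d\tauup$, so after pullback the perturbing terms survive differentiation only inside the ideal generated by the single unperturbed form $\thetaup_{\nuup}=dz_{\nuup}$. In particular (b) needs no smallness at all.

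For (a) and (c) one perturbs from $\psi=0$. There the $\thetaup_i$ are $dz_1,\dots,dz_{\nuup}$, which are $\mathbb{C}$‑linearly independent because $(U,z_1,\dots,z_{\nuup})$ is a $CR$‑chart, and for which the $2\nuup$ forms $dz_1,\dots,dz_{\nuup},d\bar z_1,\dots,d\bar z_{\nuup}$ span $\mathbb{C}T^{*}_pM$ at each $p$, since $M$ has type $(n,k)$ and hence $\mathbf{Z}_pM\cap\overline{\mathbf{Z}_pM}=0$, whence $\mathbf{Z}^0_pM+\overline{\mathbf{Z}^0_pM}=(\mathbf{Z}_pM\cap\overline{\mathbf{Z}_pM})^{\perp}=\mathbb{C}T^{*}_pM$. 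Each of these two properties is the non‑vanishing of an appropriate minor, hence open; and $\thetaup_i-dz_i=\bar z_n\psi_i(z_{\nuup})\,d\bar z_{\nuup}+\hat\psi_i(z_{\nuup})\,d\bar z_n$ has coefficients uniformly small on the compact set $\overline{U'}$ once $|\psi_i|$ is small, because $\hat\psi_i$, a Cauchy transform of a function with fixed compact support, is bounded on compacta by a constant times $\sup|\psi_i|$. Therefore, for $|\psi_i|$ small enough, $\thetaup_1,\dots,\thetaup_{\nuup}$ remain independent on $\overline{U'}$, which is (a), and $\thetaup_1,\dots,\thetaup_{\nuup},\bar\thetaup_1,\dots,\bar\thetaup_{\nuup}$ still span $\mathbb{C}T^{*}M$, which says precisely that no nonzero complex tangent vector is annihilated by all the $\thetaup_i$ and all the $\bar\thetaup_i$, i.e.\ (c). The one genuinely delicate point is the bookkeeping in (b): since the perturbation is the pullback of a form on $\mathbb{C}^2$, one must take care that the identity $d\psi^{\sharp}=d\tauup\wedge(\partial\psi^{\sharp}/\partial\tauup)$ is used in the form that survives that pullback, namely $d\thetaup_i=dz_{\nuup}\wedge\beta_i$ with $\beta_i$ genuinely regular on $U'$.
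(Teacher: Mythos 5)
Your proof is correct and takes essentially the same route as the paper: smallness of $|\psi_i|$ (hence of $\hat{\psi}_i$ and of the coefficients of $\psi_i^{\sharp}(z_n,z_{\nuup})$ on the compact closure of $U'\Subset U$) keeps the perturbed forms, together with enough conjugates, pointwise independent, which gives both the rank count for type $(n,k)$ and $\Zt'\cap\overline{\Zt'}=0$. For involutivity you use exactly the paper's key identity $d\psi_i^{\sharp}(z_n,z_{\nuup})=dz_{\nuup}\wedge\beta_i$ (the paper states it as $d\psi_i^{\sharp}(z_n,z_{\nuup})\wedge dz_{\nuup}=0$ and feeds it into the wedge criterion $(d\thetaup_i)\wedge\thetaup_1\wedge\cdots\wedge\thetaup_{\nuup}=0$, while you exhibit $d\thetaup_i=-\beta_i\wedge\thetaup_{\nuup}\in\Jt'_M$ directly), which is only a cosmetic difference.
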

\begin{proof}
The ideal sheaf is generated on $U$ by $dz_1,\hdots,dz_{\nuup}$.
After shrinking, we can assume that $d{z}_1$,
$\hdots$, $d{z}_n$, $d\bar{z}_1$, $\hdots$, $d\bar{z}_n$ are linearly
independent on $U$.\par
Thus, by taking $|\psi_i|$ sufficiently small, we may keep 
$\thetaup_1,\hdots,\thetaup_{\nuup},
\bar{\thetaup}_1,\hdots,\bar{\thetaup}_n$ 
linearly independent in any neighborhood $U'$ of $p_0$ with 
$U'\Subset{U}$.
Moreover, since $d\psi_i^{\sharp}(z_n,z_{\nuup})\wedge{dz_{\nuup}}=0$,
for $1\leq{i}<\nuup$, 
we obtain
  \begin{equation*}
    (d\thetaup_i)\wedge\thetaup_1\wedge\cdots\wedge\thetaup_{\nuup}=
d\psi_i^{\sharp}(z_n,z_{\nuup})
\wedge{\thetaup}_1\wedge\cdots
\wedge\thetaup_{\nuup-1}\wedge{dz}_{\nuup}=0,\;\;\forall i=1,\hdots,\nuup-1.
\end{equation*}
This shows that the ideal sheaf $\Jt_M'$
generated by $\thetaup_1,\hdots,
\thetaup_{\nuup}$ is involutive and 
defines a $CR$ structure of type
$(n,k)$ on~$U'$.
\end{proof}
Let us fix a sequence of distinct complex numbers
$\{\tauup_j\}$, such that
\begin{list}{}{}
\item $\im\tauup_j>0$ for all $j$, 
\quad $\tauup_j\to{0}$, \quad $\{w_{n}=\tauup_j\}
\cap\omega\neq\emptyset$ for all $j$.
\end{list}  
For each $j$ we choose an open disk
$\Delta_j$  in $\mathbb{C}$, centered
at $\tauup_j$, and such that
$\bar{\Delta}_j\cap{\bigcup}_{i\neq{j}}\bar{\Delta_i}=\emptyset$. 
Provided the $\tau_j$'s are sufficiently close to $0$,
for each $j$ we can fix a point $w^{(j)}\in\omega$, with $w^{(j)}_n=\tauup_j$,
and $w^{(j)}\to{0}$, 
 and take the functions $\psi_j$ in Lemma\,\ref{lem52} in such a way that
\begin{gather*}
  \supp\psi_i={\bigcup}_{j=0}^{\infty}\bar{\Delta}_{i+j{\nuup}},\quad
\text{for}\quad i=1,\hdots,n,\\
c_{i+j{\nuup}}=\int_{A_{i+j{\nuup}}}\psi_i^{\sharp}(z_n,z_{\nuup})
\wedge{d}z_{n}\wedge\cdots\wedge{dz}_{\nuup-1} \wedge
dz_{\nuup}\;\;\text{is real and $>0$},
\end{gather*}
where ($e_1,\hdots,e_n$ is the canonical basis of $\mathbb{C}^n$)
\begin{equation*}
  A_j=\piup^{-1}(\{w^{(j)}+(\tauup-\tauup_j){e_n}\mid \tauup\in\Delta_j\}).
\end{equation*}
Let $u$ be a $CR$ function on an open neighborhood $V$ of $p_0$ in
$U$ for the structure defined by
\eqref{eq:53}. This means that $du_{(p)}\in\Jt'_{M\,(p)}$ for all
$p\in{V}$. 
Since $\Jtm$ and $\Jt_{M}'$ agree to infinite order 
outside ${\bigcup}_{j}\piup^{-1}(\{w\mid w_n\in\Delta_j\}$, and 
${\bigcup}_j\{w\in\omega\mid w_n\in\Delta_j\}$ does not disconnect
$\omega$, by the argument of Lemma\,\ref{lem51} 
we have \eqref{eq:52} for all $w$ in the complement in 
$\piup(V)\setminus{\bigcup}_{j}\{w\in\omega\mid w_n\in\Delta_j\}$.
Thus we obtain
\begin{align*}
  0=\pm \oint_{\tauup\in\partial\Delta_j}d\tauup\int_{M_{w^{(j)}+\tauup{e}_n}}
u dz_n\wedge\cdots\wedge{d}z_{\nuup-1}
&=\pm\int_{\partial{A}_j}u\, dz_n\wedge\cdots\wedge{dz}_{\nuup}\\
&=\pm\int_{A_j}du\wedge
dz_n\wedge\cdots\wedge{dz}_{\nuup}.
\end{align*}
This yields
\begin{equation*}
  \int_{A_i+j{\nuup}}\dfrac{\partial{u}}{\partial{z}_i}\,
\psi_i^{\sharp}\wedge dz_n\wedge\cdots\wedge{dz}_{\nuup}
=0,
\end{equation*}
where, to compute $\dfrac{\partial{u}}{\partial{z}_i}$, we consider any
$\mathcal{C}^1$-extension of $u$ as a function of 
the complex variables $z_1,\hdots,z_{\nu}$ for which $\bar\partial{u}=0$
at all points of $U$. Taking the limit, we observe that
\begin{equation*}
  c_{i+j{\nuup}}^{-1}\int_{A_i+j{\nuup}}\dfrac{\partial{u}}{\partial{z}_i}\,
\psi_i^{\sharp}\wedge dz_n\wedge\cdots\wedge{dz}_{\nuup}
\longrightarrow \dfrac{\partial{u}(p_0)}{\partial{z}_i}
\Longrightarrow \dfrac{\partial{u}(p_0)}{\partial{z}_i}=0 \;\; \forall
i=1,\hdots,\nuup-1,
\end{equation*}
which, together with \eqref{eq:d1} shows that $du(p_0)\in
\mathbb{C}\,dz_{\nuup}(p_0)$. \par
We have proved:
\begin{thm} Let $M$ be a $CR$ manifold of type $(n,k)$ and assume that
$M$ is Lorentzian at a point $p_0$. Then
we can find a new $CR$ structure of type $(n,k)$ on 
a neighborhood $U$ of $p_0$, which agrees with
the original one to infinite order at $p_0$, 
and a real codirection $\etaup_0\in{T}^*_{p_0}M$ such that,
if $\Zt$ is the distribution of $(0,1)$-vector fields for this
new structure, 
 all solutions
$u\in\mathcal{C}^1$ on a neighborhood of $p_0$ to the homogeneous
system \eqref{eq:d1} satisfy $du(p_0)\in\mathbb{C}\etaup_0$.   
\end{thm}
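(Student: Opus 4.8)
The plan is to assemble the construction developed above, taking $\etaup_0=dz_{\nuup}(p_0)$. First I would reduce to the situation already treated in this section. By Lemma~\ref{lem25} we may replace $\Zt$ by a $CR$ structure of type $(n,k)$ which is locally $CR$-embeddable and agrees to infinite order with the original one at $p_0$; whether a $CR$ structure is Lorentzian at $p_0$ is unchanged by such a modification, since $\mathbf{Z}_{p_0}M$, $H^0_{p_0}M$ and the Levi forms $\mathbf{L}_{\xi_0}$ depend only on the $1$-jet of the structure at $p_0$. As agreement to infinite order at $p_0$ is transitive, the structure I will ultimately build, obtained by one further modification of the embeddable one, still agrees to infinite order with the original $\Zt$ at $p_0$. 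Hence I may assume from the outset that $M$ is locally $CR$-embeddable and Lorentzian at $p_0$, so that there is a $CR$-chart $(U,z_1,\hdots,z_{\nuup})$ centered at $p_0$ with $dz_i(p_0)$ real for $i=n+1,\hdots,\nuup$ and the Lorentzian normal form \eqref{eq:51a}; after shrinking also $\sum_{i=n}^{\nuup-1}z_i\bar{z}_i\geq\tfrac{1}{2}\im{z}_{\nuup}$ on $U$.

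Next I would put in place the fibration used for the period argument. The map $\piup(p)=(z_1(p),\hdots,z_{n-1}(p),z_{\nuup}(p))$, after a further shrinking, carries $U$ onto $\bar\omega$ with $B\setminus\omega$ strictly convex, in such a way that the fibers $M_w=\piup^{-1}(w)$ are spheres $S^k$ for $w\in\omega$ and single points for $w\in\partial\omega\cap B$; this is precisely where \eqref{eq:51a} enters, as the remaining $k+1$ real defining equations then cut out spheres. Lemma~\ref{lem51} now gives that for every $u\in\Otm(U)$ the period $F(w)=\int_{M_w}u\,dz_n\wedge\cdots\wedge dz_{\nuup-1}$ vanishes identically on $\omega$: Morera's theorem shows $F$ holomorphic, because Stokes' theorem over the chains $\piup^{-1}(D)$ together with $du\in\langle dz_1,\hdots,dz_{\nuup}\rangle$ kills the relevant boundary integrals, and $F$ extends by $0$ across $\partial\omega\cap B$, so $F\equiv 0$ by Holmgren.

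The heart of the argument is the perturbation. Starting from the $\bar\partial$-closed forms $\psi^{\sharp}$ attached to compactly supported smooth functions $\psi$ of one complex variable, Lemma~\ref{lem52} yields that $\thetaup_i=dz_i+\psi_i^{\sharp}(z_n,z_{\nuup})$ for $i=1,\hdots,\nuup-1$, together with $\thetaup_{\nuup}=dz_{\nuup}$, generate an involutive ideal sheaf $\Jt'_M$ of a $CR$ structure of type $(n,k)$ on any $U'\Subset U$, provided the $|\psi_i|$ are small — involutivity because $d\psi_i^{\sharp}(z_n,z_{\nuup})\wedge dz_{\nuup}=0$, and the type because $\thetaup_1,\hdots,\thetaup_{\nuup},\bar\thetaup_1,\hdots,\bar\thetaup_n$ stay independent. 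I would then fix distinct $\tauup_j\to 0$ with $\im\tauup_j>0$ and $\{w_n=\tauup_j\}\cap\omega\neq\emptyset$, pairwise disjoint closed disks $\Delta_j\ni\tauup_j$, points $w^{(j)}\in\omega$ with $w^{(j)}_n=\tauup_j$ and $w^{(j)}\to 0$, and arrange each $\psi_i$ to be supported on $\bigcup_j\bar\Delta_{i+j\nuup}$ with the periods $c_{i+j\nuup}=\int_{A_{i+j\nuup}}\psi_i^{\sharp}(z_n,z_{\nuup})\wedge dz_n\wedge\cdots\wedge dz_{\nuup}$ real and strictly positive, where $A_j=\piup^{-1}(\{w^{(j)}+(\tauup-\tauup_j)e_n\mid\tauup\in\Delta_j\})$. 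Since $0\notin\bar\Delta_j$ for every $j$, this new structure agrees with $\Zt$ to infinite order at $p_0$, and it is the one asserted in the statement.

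It then remains to read off the codirection. Let $u$ be a $\mathcal{C}^1$ solution near $p_0$ of the homogeneous system for $\Jt'_M$. As $\Jtm$ and $\Jt'_M$ agree to infinite order off $\bigcup_j\piup^{-1}(\{w_n\in\Delta_j\})$, a set that does not disconnect $\omega$, the argument of Lemma~\ref{lem51} still forces $F(w)=0$ away from it; a contour integration of the corresponding fiber period around $\partial\Delta_j$, followed by Stokes' theorem on the chain $A_j$, then yields $\int_{A_{i+j\nuup}}\tfrac{\partial u}{\partial z_i}\,\psi_i^{\sharp}\wedge dz_n\wedge\cdots\wedge dz_{\nuup}=0$, where $\partial u/\partial z_i$ is computed from any $\mathcal{C}^1$ extension of $u$, as a function of $z_1,\hdots,z_{\nuup}$, with $\bar\partial u=0$ on $U$. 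Dividing by $c_{i+j\nuup}>0$ and letting $j\to\infty$, so that $A_{i+j\nuup}$ shrinks to $p_0$, gives $\partial u(p_0)/\partial z_i=0$ for $i=1,\hdots,\nuup-1$; together with \eqref{eq:d1} at $p_0$ this forces $du(p_0)\in\mathbb{C}\,dz_{\nuup}(p_0)$, so one takes $\etaup_0=dz_{\nuup}(p_0)$, which is real by the choice of chart. The delicate point in this scheme lies entirely in the perturbation step: the $\psi_i$ must be small enough to preserve involutivity and the $CR$ type, supported away from $p_0$ so as to keep infinite-order agreement there, and yet have strictly positive periods $c_{i+j\nuup}$, so that the limit at the end produces a genuine vanishing rather than a cancellation — balancing these three demands simultaneously, rather than any isolated estimate, is where the work sits.
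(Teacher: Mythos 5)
Your proposal is correct and takes essentially the same route as the paper: the paper's own proof of this theorem is nothing more than the reduction to the locally $CR$-embeddable case via Lemma~\ref{lem25} together with the discussion already carried out in \S\ref{sec:3x} (the chart satisfying \eqref{eq:51a}, the fibration $\piup$, Lemma~\ref{lem51}, the perturbation of Lemma~\ref{lem52} with $\supp\psi_i$ in the disks $\bar\Delta_{i+j\nuup}$ and positive periods $c_{i+j\nuup}$, and the limit $j\to\infty$ giving $\partial u(p_0)/\partial z_i=0$ for $i<\nuup$), which is exactly what you assemble with $\etaup_0=dz_{\nuup}(p_0)$. Your additional remarks --- that the Lorentzian hypothesis and the conclusion survive the Lemma~\ref{lem25} modification because the Levi form depends only on the $1$-jet of the structure at $p_0$, and that agreement to infinite order is transitive --- are correct and, if anything, make explicit points the paper's one-line proof leaves tacit.
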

\begin{proof}
  Indeed, using Lemma\,\ref{lem25} we can always reduce to the case
in which $M$ is locally embeddable at $p_0$.
\end{proof}

\begin{cor}
We can find a new $CR$ structure of type $(n,k)$ on $U$, which agrees with
the original one to infinite order at $p_0$, and which is not $CR$-embeddable
at~$p_0$.  
\end{cor}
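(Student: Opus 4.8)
The plan is to read the Corollary off directly from the Theorem just proved, using only the definitions collected in \S\ref{sec:d}. Let $\Zt$ be the new $CR$ structure of type $(n,k)$ on the neighborhood $U$ of $p_0$ furnished by the Theorem, and let $\etaup_0\in{T}^*_{p_0}M$ be the associated real codirection, so that every $u\in\mathcal{C}^1$ defined near $p_0$ with $Zu=0$ for all $Z\in\Zt$ satisfies $du(p_0)\in\mathbb{C}\etaup_0$. This $\Zt$ already has type $(n,k)$ and agrees with the original structure to infinite order at $p_0$, so the only thing left to verify is the failure of $CR$-embeddability at $p_0$.

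I would argue by contradiction. Suppose the new structure were $CR$-embeddable at $p_0$. Then there would exist a $CR$-chart $(U',z_1,\hdots,z_{\nuup})$ with $p_0\in U'$; by definition $z_1,\hdots,z_{\nuup}\in\Otm(U')\cap\mathcal{C}^{\infty}(U')$ and $dz_1(p_0)\wedge\cdots\wedge{dz}_{\nuup}(p_0)\neq{0}$. But each $z_j$, being a smooth $CR$ function, is in particular a $\mathcal{C}^1$ solution of the homogeneous system \eqref{eq:d1} on a neighborhood of $p_0$, so the Theorem forces $dz_j(p_0)\in\mathbb{C}\etaup_0$ for every $j$. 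Since $\dim_{\mathbb{C}}\mathbb{C}\etaup_0\leq 1$ while $\nuup=n+k\geq 2$ (indeed $n\geq 2$ and $k\geq 1$), the covectors $dz_1(p_0),\hdots,dz_{\nuup}(p_0)$ are linearly dependent, contradicting $dz_1(p_0)\wedge\cdots\wedge{dz}_{\nuup}(p_0)\neq{0}$. Hence no $CR$-chart of the new structure can contain $p_0$, i.e.\ it is not $CR$-embeddable at~$p_0$.

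There is no substantive obstacle here: all of the analytic content sits in the Theorem, and the Corollary is merely the observation that collapsing the differentials of every local solution into the single line $\mathbb{C}\etaup_0$ is incompatible with the $\nuup=n+k\geq 2$ functions with independent differentials that a $CR$-chart would supply. The only point deserving a word of care is the identification of a smooth $CR$ function with a $\mathcal{C}^1$ solution of \eqref{eq:d1}, which is immediate from the definition of $\Otm$, together with the remark that "not $CR$-embeddable at $p_0$" is to be read as "no $CR$-chart has $p_0$ in its domain".
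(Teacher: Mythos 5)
Your argument is correct and is exactly the (implicit) one the paper intends: the Corollary is stated without proof as an immediate consequence of the preceding Theorem, since the $\nuup=n+k\geq 2$ smooth $CR$ functions of a putative $CR$-chart are in particular $\mathcal{C}^1$ solutions of \eqref{eq:d1}, hence their differentials at $p_0$ would all lie in the line $\mathbb{C}\etaup_0$, contradicting $dz_1(p_0)\wedge\cdots\wedge dz_{\nuup}(p_0)\neq 0$. Nothing further is needed.
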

\section{Involutive systems whose solutions are critical at $p_0$}
\label{sec4}
In this section we improve the result of the previous section in the
case of a Lorentzian $CR$ manifold of the hypersurface type.\par
We assume that
$M$ has $CR$-dimension $n\geq{2}$ and $CR$-codimension ${1}$,
and is Lorentzian
and locally embeddable at $p_0\in{M}$.
We have $m=\dim_{\mathbb{R}}M=n+2$ and 
we set $\nuup=n+1$, 
 \par
We can fix 
a $CR$-chart  $(U,z_1,\hdots,z_{\nuup})$ centered at $p_0$, with 
\begin{equation}
  \label{eq:41a}
  \im{z}_{\nuup}+{\sum}_{i=2}^{\nuup}z_i\bar{z}_i
=z_1\bar{z}_1
+O(|z|^3)\quad\text{on $U$}.
\end{equation}
By shrinking, we get that $z_1\bar{z}_1
\geq\tfrac{1}{2}\im{z_{\nuup}}$
on $U$. 
Consider the map $\piup:U\ni{p}\to w=(z_{2}(p),\hdots,z_{\nuup}(p))
\in\mathbb{C}^n$. By a further shrinking, we can assume 
that there is an open ball $B\subset\mathbb{C}^n$, centered at $0$, 
such that
\begin{list}{-}{}
\item $\piup(U)=\bar{\omega}$, with $B\setminus\omega$ 
strictly convex, 
and $\partial\omega\cap{B}$ smooth;
\item if $\im{\tauup}\geq{0}$, 
then $\{w\in{B}\mid \im{w}_n=\tauup\}\subset\omega$; 
\item for all $w\in\omega$ the set $M_{w}=\piup^{-1}(w)$ is diffeomorphic
to the circle $S^1$;
\item for $w\in\partial\omega\cap{B}$ the set $M_{w}=\piup^{-1}(w)$ is a point. 
\end{list}
By repeating the proof of Lemma\,\ref{lem51}, we obtain
\begin{lem}\label{lem41}
If $u\in\Otm(U)$, then
\begin{equation}
  \label{eq:52}\qquad\qquad\qquad
  F(w)=\oint_{M_w}u\, dz_1=0,\;\;
\forall w\in\omega.\qquad\qquad\qquad \qed
\end{equation}
\end{lem}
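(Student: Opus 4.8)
The plan is to run the proof of Lemma~\ref{lem51} essentially verbatim, the only difference being that the fibers $M_w=\piup^{-1}(w)$ of $\piup\colon U\to\mathbb{C}^n$ are now circles $S^1$ rather than spheres $S^k$, so that the fiberwise integration is of the single $(1,0)$-form $dz_1$ in place of a form of degree $k$. Here $w=(z_2,\hdots,z_{\nuup})$ plays the role formerly played by $(z_1,\hdots,z_{n-1},z_{\nuup})$, and one has $\piup^*(dw_1\wedge\cdots\wedge dw_n)=dz_2\wedge\cdots\wedge dz_{\nuup}$.

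First I would prove that $F$ is holomorphic on $\omega$. Fix any closed polycylinder $D=D_1\times\cdots\times D_n\subset\omega$ with $D_i=\{\tauup\in\mathbb{C}\mid|\tauup-\tauup_i|\le\epsilon_i\}$, and for $1\le j\le n$ set $\partial_j(D)=\{w\in D\mid |w_j-\tauup_j|=\epsilon_j\}$ and $\gammaup_j=\tfrac{\partial}{\partial\bar w_j}\rfloor(d\bar w_1\wedge\cdots\wedge d\bar w_n)$. Write $N=\piup^{-1}(D)$ and $N_i=\piup^{-1}(\partial_iD)$, so that $\partial N=\sum_{i=1}^n\pm N_i$. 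The form $u\,dz_1\wedge\cdots\wedge dz_{\nuup}\wedge\piup^*\gammaup_j$ restricts to $0$ on $N_i$ for $i\neq j$, since it contains both $\piup^*dw_i$ and $\piup^*d\bar w_i$, whose product already vanishes on $\piup^{-1}(\partial_iD)$. Performing the fiber integration and then Stokes' theorem exactly as in Lemma~\ref{lem51} yields
\begin{equation*}
\oint_{\partial_jD}F(w)\,dw_1\wedge\cdots\wedge dw_n\wedge\gammaup_j
=\pm\int_N du\wedge dz_1\wedge\cdots\wedge dz_{\nuup}\wedge\piup^*\gammaup_j=0,
\end{equation*}
because $\piup^*\gammaup_j$ is closed and $du\in\langle dz_1,\hdots,dz_{\nuup}\rangle$ as $u\in\Otm(U)$. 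Since this holds for every such $D$ and every $j$, $F$ is holomorphic on $\omega$.

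It remains to show that $F$ extends continuously to $\bar\omega\cap B$ and vanishes on $\partial\omega\cap B$: this is immediate, because for $w_0\in\partial\omega\cap B$ the fiber $M_{w_0}$ degenerates to a single point, over which the integral of the bounded $1$-form $u\,dz_1$ is $0$. Then $F$ is holomorphic on the connected open set $\omega$, continuous up to the smooth piece $\partial\omega\cap B$ of its boundary and vanishing there; since $\partial\omega\cap B$ bounds the strictly convex set $B\setminus\omega$, it is non-characteristic for $\bar\partial$, and Holmgren's uniqueness theorem --- $\bar\partial$ has constant coefficients on $\mathbb{C}^n$ --- forces $F\equiv0$ on $\omega$.

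There is no genuine obstacle here: the substantive content of the lemma is exactly that the geometric normalization established just before its statement (fibers $\simeq S^1$ over $\omega$, collapsing to points along $\partial\omega\cap B$, with $B\setminus\omega$ strictly convex and $\omega$ connected) supplies precisely the ingredients needed for this Cauchy--Morera computation and the boundary-vanishing argument. The only two points deserving a little care are the vanishing of $u\,dz_1\wedge\cdots\wedge dz_{\nuup}\wedge\piup^*\gammaup_j$ on the side faces $N_i$ with $i\neq j$ and the application of Stokes' theorem on $N$, and both are routine and identical to the corresponding steps in the proof of Lemma~\ref{lem51}.
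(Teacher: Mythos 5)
Your proof is correct and follows essentially the paper's own route: the paper proves Lemma \ref{lem41} precisely by repeating the argument of Lemma \ref{lem51}, with the fibers now circles and the fiber integral taken of $u\,dz_1$, exactly as you do. The Morera-type computation over polycylinders, the Stokes step using $du\in\Jt^1_M$, the continuous extension with vanishing on $\partial\omega\cap B$ as the fibers collapse to points, and the appeal to Holmgren's uniqueness theorem all coincide with the paper's argument.
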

Since $2\,z_1\bar{z}_1\geq\im{z}_{\nuup}$ on $U$,
for any smooth function $\psi$ of a complex variable
$\tauup$, with $\supp\psi\subset\{\im\tauup\geq{0}\}$, the function
$z_1^{-1}\psi(z_{\nuup})$ can be extended to a smooth function on $U$, vanishing to
infinite order on $\{z_1=0\}\cap{U}$. 
\begin{lem}\label{lem32a}
If $\psi_i$, for $i=1,\hdots,\nuup$ are smooth fuctions of a complex variable
$\tauup$, with support contained in $\{\im\tauup\geq{0}\}$, then
\begin{equation}
  \label{eq:52a}
  \thetaup_1=dz_1+z_1^{-1}{\psi_1({z_{\nuup}})}d{\bar{z}_{\nuup}},
\;\hdots,\; \thetaup_{\nuup}=
dz_{\nuup}+z_1^{-1}{\psi_{\nuup}({z_{\nuup}})}d{\bar{z}_{\nuup}}
\end{equation}
(the functions $z_1^{-1}{\psi_i({z_{\nuup}})}$ are put $=0$ for $z_1=0$) 
generate the ideal sheaf $\Jt_{U'}'$ of a $CR$ structure of type $(n,1)$
in a neighborhood $U'$ of $p_0$ in $U$, which agrees to infinite order with
the original one at $p_0$. 
\end{lem}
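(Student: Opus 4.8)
The plan is to verify in turn that the forms $\thetaup_1,\dots,\thetaup_{\nuup}$ in \eqref{eq:52a} are genuine smooth $1$-forms, that they (together with $n$ of their conjugates) stay pointwise linearly independent near $p_0$, that the ideal they generate is closed under $d$, and that this ideal has the same infinite jet at $p_0$ as the ideal $\langle dz_1,\dots,dz_{\nuup}\rangle$ attached to the original $CR$-chart. First I would invoke the remark preceding the statement: because $2z_1\bar z_1\ge\im z_{\nuup}$ on $U$ while $\supp\psi_i\subset\{\im\tauup\ge 0\}$, the coefficient $g_i:=z_1^{-1}\psi_i(z_{\nuup})$ --- set equal to $0$ on $\{z_1=0\}$ --- belongs to $\mathcal{C}^{\infty}(U)$ and vanishes to infinite order along $\{z_1=0\}\cap U$; in particular, since the chart is centered at $p_0$, the function $g_i$ is flat at $p_0$. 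Hence each $\thetaup_i=dz_i+g_i\,d\bar z_{\nuup}$ is smooth, with $\thetaup_i(p_0)=dz_i(p_0)$ and $\bar\thetaup_j(p_0)=d\bar z_j(p_0)$. After a preliminary shrinking we may assume $dz_1,\dots,dz_{\nuup},d\bar z_1,\dots,d\bar z_n$ are linearly independent everywhere on $U$ (as in the proof of Lemma~\ref{lem52}); then, by continuity, $\thetaup_1,\dots,\thetaup_{\nuup},\bar\thetaup_1,\dots,\bar\thetaup_n$ remain linearly independent on a suitable neighborhood $U'$ of $p_0$. So on $U'$ the $\thetaup_i$ span a rank-$\nuup$ ($=m-n$) subbundle of $\mathbb{C}T^*U'$ whose sum with its conjugate is all of $\mathbb{C}T^*U'$, and the characteristic system $\Zt'$ of the ideal they generate is a complex distribution of rank $n$ with $\Zt'\cap\overline{\Zt'}=0$.

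The heart of the argument is the involutivity identity. Since $g_i$ is a smooth function of $z_1,z_{\nuup},\bar z_{\nuup}$ alone and is holomorphic in $z_1$, the differential $dg_i$ has no $d\bar z_1$-component, so
\begin{equation*}
d\thetaup_i=dg_i\wedge d\bar z_{\nuup}=\frac{\partial g_i}{\partial z_1}\,dz_1\wedge d\bar z_{\nuup}+\frac{\partial g_i}{\partial z_{\nuup}}\,dz_{\nuup}\wedge d\bar z_{\nuup}.
\end{equation*}
Substituting $dz_1=\thetaup_1-g_1\,d\bar z_{\nuup}$ and $dz_{\nuup}=\thetaup_{\nuup}-g_{\nuup}\,d\bar z_{\nuup}$, and using $d\bar z_{\nuup}\wedge d\bar z_{\nuup}=0$, yields
\begin{equation*}
d\thetaup_i=\Bigl(\frac{\partial g_i}{\partial z_1}\,\thetaup_1+\frac{\partial g_i}{\partial z_{\nuup}}\,\thetaup_{\nuup}\Bigr)\wedge d\bar z_{\nuup},
\end{equation*}
which lies in the algebraic ideal generated by $\thetaup_1,\dots,\thetaup_{\nuup}$; equivalently $d\thetaup_i\wedge\thetaup_1\wedge\dots\wedge\thetaup_{\nuup}=0$ for $i=1,\dots,\nuup$. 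Thus $\Jt_{U'}'$ is involutive (i.e. $d\Jt_{U'}'\subset\Jt_{U'}'$), its characteristic system $\Zt'$ is involutive, and being a rank-$n$ involutive distribution with $\Zt'\cap\overline{\Zt'}=0$ inside a real manifold of dimension $m=2n+1$, $\Zt'$ is a $CR$ structure of type $(n,1)$ whose differential ideal is precisely $\Jt_{U'}'$.

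Finally, $\thetaup_i-dz_i=g_i\,d\bar z_{\nuup}$ with $g_i$ flat along $\{z_1=0\}\cap U\ni p_0$, so $\thetaup_i$ and $dz_i$ have the same Taylor expansion at $p_0$ for every $i$; since $\Jtm|_U$ is generated by $dz_1,\dots,dz_{\nuup}$, the ideals $\Jt_{U'}'$ and $\Jtm|_{U'}$ --- hence the new and old $CR$ structures --- agree to infinite order at $p_0$, which is the assertion. The only genuinely delicate point is the smoothness and infinite-order flatness of the singular-looking coefficients $z_1^{-1}\psi_i(z_{\nuup})$ along $\{z_1=0\}$; this is exactly what the estimate $2z_1\bar z_1\ge\im z_{\nuup}$ buys us in the remark, and once it is granted the remaining steps are the one-line computation of $d\thetaup_i$ and a routine open-condition argument for linear independence.
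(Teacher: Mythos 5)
Your proposal is correct and follows essentially the same route as the paper: the flatness of $z_1^{-1}\psi_i(z_{\nuup})$ along $\{z_1=0\}$ from the remark, linear independence of $\thetaup_1,\hdots,\thetaup_{\nuup},\bar\thetaup_1,\hdots,\bar\thetaup_n$ near $p_0$, the computation $d\thetaup_i=-z_1^{-2}\psi_i\,dz_1\wedge d\bar z_{\nuup}+z_1^{-1}\partial_{z_{\nuup}}\psi_i\,dz_{\nuup}\wedge d\bar z_{\nuup}$ giving $d\thetaup_i\wedge\thetaup_1\wedge\cdots\wedge\thetaup_{\nuup}=0$, and infinite-order agreement at $p_0$. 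The only difference is cosmetic: you substitute $dz_1,dz_{\nuup}$ in terms of the $\thetaup$'s, while the paper wedges directly and replaces $\thetaup_1\wedge\cdots\wedge\thetaup_{\nuup}$ by $dz_1\wedge\cdots\wedge dz_{\nuup}$ — trivially equivalent.
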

\begin{proof} By the condition 
on the supports, the functions 
$z_1^{-1}{\psi_i(z_{\nuup})}$ are smooth on $U$ and
vanish to infinite for $z_1=0$, and in particular at $p_0$. 
Thus $\thetaup_1$, $\hdots$, $\thetaup_\nuup$, $\bar{\thetaup}_1$,
$\hdots$, $\bar{\thetaup}_n$ yield a basis of $\mathbb{C}T_pM$ for $p$
in a suitable neighborhood $U'$ of $p_0$, and agree with
$dz_1, \hdots, dz_{\nuup}, d\bar{z}_1,\hdots,d\bar{z}_n$ to infinite order
at $p_0$. \par
We have moreover
\begin{equation*}
  d\thetaup_i=z_1^{-1}
\dfrac{\partial{\psi_i({z_{\nuup}})}}{\partial{{z_{\nuup}}}}d{z_{\nuup}}\wedge
d{\bar{z}_{\nuup}}-{z_1^{-2}}\psi_i({z_{\nuup}})dz_1\wedge{d}{\bar{z}_{\nuup}}.
\end{equation*}
Hence
\begin{equation*}
  d\thetaup_i\wedge\thetaup_1\wedge\cdots\wedge\thetaup_{\nuup}=
d\thetaup_i\wedge{dz_1}\wedge\cdots\wedge{d}z_{\nuup}=0
\end{equation*}
shows that $\Jt'_{U'}$ is involutive. The proof is complete.
\end{proof}
Let us fix a sequence of distinct complex numbers
$\{\tauup_j\}$, such that
\begin{list}{}{}
\item $\im\tauup_j>0$ for all $j$, 
\quad $\tauup_j\to{0}$, \quad $\{w_{n}=\tauup_j\}
\cap\omega\neq\emptyset$ for all $j$.
\end{list}  
For each $j$ we choose an open disk
$\Delta_j$  in $\mathbb{C}$, centered
at $\tauup_j$, and such that
$\bar{\Delta}_j\cap{\bigcup}_{i\neq{j}}\bar{\Delta_i}=\emptyset$. 
Provided the $\tau_j$'s are sufficiently close to $0$,
for each $j$ we can fix a point $w^{(j)}\in\omega$, with $w^{(j)}_n=\tauup_j$,
and $w^{(j)}\to{0}$, 
 and take the functions $\psi_j$ in Lemma\,\ref{lem32a} in such a way that
\begin{gather*}
  \supp\psi_i={\bigcup}_{j=0}^{\infty}\bar{\Delta}_{i+j{(\nuup+1)}},\quad
\text{for}\quad i=1,\hdots,\nuup,\\
c_{i+j{(\nuup+1)}}=\int_{A_{i+j({\nuup}+1)}}\!\!\!
z_1^{-1}\psi_i({z_{\nuup}})d{\bar{z}_{\nuup}}
\wedge{d}z_{1}\wedge
dz_{\nuup}\;\;\text{is real and $>0$},
\end{gather*}
where 
\begin{equation*}
  A_j=\piup^{-1}(\{w^{(j)}+(\tauup-\tauup_j){e_n}\mid \tauup\in\Delta_j\}).
\end{equation*}
Here we denoted by 
$e_1,\hdots,e_n$ the canonical basis of $\mathbb{C}^n$. \par
Let $u$ be a $CR$ function on an open neighborhood $V$ of $p_0$ in
$U'$ for the structure defined by
\eqref{eq:52a}. This means that $du_{(p)}\in\Jt'_{U'\,(p)}$ for all
$p\in{V}$. 
Since $\Jtm$ and $\Jt_{U'}'$ agree to infinite order on $\piup(U')$ 
outside ${\bigcup}_{i}\supp{\psi_i(w)}$, and this set
does not disconnect $U$, by the argument of Lemma\,\ref{lem51} 
we have \eqref{eq:52} for all $w$ in the complement in $\piup(V)$ of
${\bigcup}_{j}\{w\in\omega\mid w_n\in\Delta_j\}$.
Thus we obtain
\begin{equation*}
0=\pm \oint_{\tauup\in\partial\Delta_j}d\tauup\oint_{M_{w^{(j)}+\tauup{e}_n}}
\!\!\!
u \,dz_n
=\pm\int_{\partial{A}_j}u\, dz_1\wedge{dz}_{\nuup}
=\pm\int_{A_j}du\wedge
dz_1\wedge{dz}_{\nuup}.  
\end{equation*}
This yields
\begin{equation}\label{eq:444}
  I_{i+j(\nuup+1)}(u)=\int_{A_i+j{(\nuup+1)}}\dfrac{\partial{u}}{\partial{z}_i}\,
z_1^{-1}\wedge{d\bar{z}_{\nuup}}\wedge dz_1\wedge{dz}_{\nuup}
=0,
\end{equation}
where, to compute $\dfrac{\partial{u}}{\partial{z}_i}$, we consider any
$\mathcal{C}^1$-extension of $u$ as a function of 
the complex variables $z_1,\hdots,z_{\nu}$ for which $\bar\partial{u}=0$
at all points of $V$. When $j\to\infty$, 
$c_{i+j{(\nuup+1)}}^{-1}I_{i+j{(\nuup+1)}}
\to \dfrac{\partial{u}(p_0)}{\partial{z}_i}$.
Hence, from
\eqref{eq:444} 
we obtain that 
$\dfrac{\partial{u}(p_0)}{\partial{z}_i}=0$ for $1\leq{i}\leq\nuup$,
which, together with \eqref{eq:d1} shows that $du(p_0)=0$. \par
We have proved:
\begin{thm} \label{thm43}
If $M$ is a $CR$ manifold of type $(n,1)$ and is 
Lorentzian at $p_0\in{M}$, then
we can find a new $CR$ structure of type $(n,1)$ on 
an open neighborhood $U$ of $p_0$ in $M$, which agrees with
the original one to infinite order at $p_0$, 
such that,
if $\Zt$ is the distribution of $(0,1)$-vector fields for this
new structure, 
 all solutions
$u\in\mathcal{C}^1$ on a neighborhood of $p_0$ to the homogeneous
system \eqref{eq:d1} satisfy $du(p_0)=0$.
\end{thm}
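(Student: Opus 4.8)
The plan is to reduce at once, via Lemma~\ref{lem25}, to the locally $CR$-embeddable case. Two $CR$ structures that agree to infinite order at $p_0$ in particular have the same $1$-jet there, so the property of being Lorentzian at $p_0$ is unaffected, and agreement to infinite order at $p_0$ is transitive; hence it suffices to prove the statement under the extra hypothesis that $M$ is locally $CR$-embeddable at $p_0$. That is exactly the situation analyzed in the discussion preceding the theorem, so the real task is to organize that discussion into a proof.

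First I would fix a $CR$-chart $(U,z_1,\dots,z_\nuup)$ centered at $p_0$ satisfying the Lorentzian normalization \eqref{eq:41a}, and shrink so that $2\,z_1\bar{z}_1\ge\im z_\nuup$ on $U$; the Lorentzian hypothesis then forces the projection $\piup\colon U\ni p\mapsto(z_2(p),\dots,z_\nuup(p))\in\bar\omega\subset\mathbb{C}^n$ to have circle fibres $M_w=\piup^{-1}(w)$ over $\omega$ and single-point fibres over $\partial\omega\cap B$. The analytic input is Lemma~\ref{lem41}: for every $u\in\Otm(U)$ the function $F(w)=\oint_{M_w}u\,dz_1$ is holomorphic on $\omega$ (Morera, after Stokes turns the boundary integral into $\int du\wedge dz_1$, which vanishes since $du\in\langle dz_1,\dots,dz_\nuup\rangle$) and tends to $0$ on $\partial\omega\cap B$ because there $M_w$ is a point, whence $F\equiv0$ on $\omega$ by Holmgren. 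The perturbation input is Lemma~\ref{lem32a}: since $2z_1\bar{z}_1\ge\im z_\nuup$, for $\psi_i$ smooth with $\supp\psi_i\subset\{\im\tauup\ge0\}$ the coefficients $z_1^{-1}\psi_i(z_\nuup)$ extend smoothly across $\{z_1=0\}$ with infinite-order vanishing there, so the forms $\thetaup_i=dz_i+z_1^{-1}\psi_i(z_\nuup)\,d\bar{z}_\nuup$, $i=1,\dots,\nuup$, generate an involutive ideal sheaf $\Jt'_{U'}$ of a $CR$ structure of type $(n,1)$ on a smaller neighborhood $U'$, agreeing with the original structure to infinite order at $p_0$.

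The combinatorial core is to pick pairwise disjoint closed disks $\bar\Delta_j$ with $\im\tauup_j>0$ shrinking to $0$, to distribute them among the $\nuup$ coefficients by residue classes modulo $\nuup+1$ so that $\supp\psi_i=\bigcup_j\bar\Delta_{i+j(\nuup+1)}$, and to choose each $\psi_i\ge0$ so that the mass $c_{i+j(\nuup+1)}=\int_{A_{i+j(\nuup+1)}}z_1^{-1}\psi_i(z_\nuup)\,d\bar{z}_\nuup\wedge dz_1\wedge dz_\nuup$ is real and positive, where $A_j=\piup^{-1}(\{w^{(j)}+(\tauup-\tauup_j)e_n\mid\tauup\in\Delta_j\})$ with $w^{(j)}\to0$. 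Given a $\mathcal{C}^1$ solution $u$ of \eqref{eq:d1} for the perturbed structure on a neighborhood $V$ of $p_0$, the two ideals coincide to infinite order off $\bigcup_i\supp\psi_i$, a set that does not disconnect $\piup(V)$, so the argument of Lemma~\ref{lem41} still yields $\oint_{M_w}u\,dz_1=0$ there. Integrating this over $\partial\Delta_j$ and applying Stokes on $A_j$, together with $du\in\langle\thetaup_1,\dots,\thetaup_\nuup\rangle$ on $V$, gives the identity \eqref{eq:444}; dividing $I_{i+j(\nuup+1)}(u)$ by $c_{i+j(\nuup+1)}$ and letting $j\to\infty$ (so that $A_j\to p_0$) gives $c_{i+j(\nuup+1)}^{-1}I_{i+j(\nuup+1)}(u)\to\partial u(p_0)/\partial z_i$, hence $\partial u(p_0)/\partial z_i=0$ for $1\le i\le\nuup$; together with \eqref{eq:d1}, which fixes the derivatives of $u$ in the $\bar{\thetaup}_j$ directions, this forces $du(p_0)=0$.

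I expect the genuine obstacle to be the limit step $c_{i+j(\nuup+1)}^{-1}I_{i+j(\nuup+1)}(u)\to\partial u(p_0)/\partial z_i$: one must check that the weight $z_1^{-1}$, though singular on $\{z_1=0\}$, stays bounded on each $A_j$ --- these sit where $2z_1\bar{z}_1\ge\im z_\nuup>0$ --- and that the $\mathcal{C}^1$-regularity of $u$ supplies enough uniformity near $p_0$ for the normalized integrals to converge to the pointwise value of $\partial u/\partial z_i$ at $p_0$, exactly as in the mean value argument of Lemma~\ref{lem:b2}. The reason codimension $1$ yields the sharper conclusion $du(p_0)=0$, rather than merely $du(p_0)\in\mathbb{C}\,dz_\nuup(p_0)$ as in \S\ref{sec:3x}, is that the singular coefficients $z_1^{-1}\psi_i(z_\nuup)\,d\bar{z}_\nuup$ keep $\Jt'_{U'}$ involutive even when the index $i$ runs all the way up to $\nuup$, so the direction $\partial/\partial z_\nuup$ too can be annihilated; arranging the supports, the positivity of the $c_j$, and the non-degeneracy of $\thetaup_1,\dots,\thetaup_\nuup,\bar{\thetaup}_1,\dots,\bar{\thetaup}_n$ simultaneously is the remaining bookkeeping, and it is supplied by Lemmas~\ref{lem41} and \ref{lem32a}.
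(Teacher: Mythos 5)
Your proposal is correct and follows essentially the same route as the paper: reduce to the locally $CR$-embeddable case via Lemma~\ref{lem25}, then invoke the discussion preceding the theorem (Lemma~\ref{lem41}, Lemma~\ref{lem32a}, the choice of disks $\Delta_j$ distributed among the $\psi_i$ by residue classes, and the normalized limit $c_{i+j(\nuup+1)}^{-1}I_{i+j(\nuup+1)}(u)\to\partial u(p_0)/\partial z_i$). Your explicit remarks on why the Lorentzian condition and the infinite-order agreement survive the reduction, and on the boundedness of $z_1^{-1}$ on the sets $A_j$, only make explicit what the paper leaves implicit.
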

\begin{proof}
We can apply the discussion above after reducing, by Lemma\,\ref{lem25},
to the case in which $M$ is locally $CR$-embeddable  at $p_0$.
\end{proof}
\section{The case of higher codimension}\label{sh}
In this section we extend the result of Theorem\,\ref{thm43}
to some $CR$ manifolds with $CR$ dimension and $CR$ codimension 
both greater than $1$. To this aim we will first 
recall some results on weak unique continuation and next consider 
morphisms of $CR$ manifolds.
\subsection{Minimal locally $CR$-embeddable 
$CR$ manifolds and unique continuation}
We recall that a $CR$ submanifold 
$M$ is \emph{minimal} at $p_0\in{M}$ if there is
no germ $(N,p_0)$  of $CR$ submanifold of $M$ at $p_0$, having 
the same $CR$ dimension, but smaller $CR$ codimension.
We have
\begin{lem} \label{lem:3.1}
Assume that $M$ is minimal and locally $CR$-embeddable at $p_0\in{M}$. \par
Let $(S,p_0)$ be a germ 
of a $CR$ submanifold of $M$, of type $(0,\nuup)$.
Then a germ $f\in\Ot_{M,(p_0)}$  of a 
$CR$ function at $p_0$, vanishing on $(S,p_0)$,
is equal to $0$. \par
If $M$ is minimal and locally $CR$-embeddable
at all points, then the $CR$ functions on $M$ satisfy the weak
unique continuation principle.
\end{lem}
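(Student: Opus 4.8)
The plan is to reduce the statement to a uniqueness assertion for holomorphic functions. By the local $CR$-embeddability hypothesis at $p_0$, fix a $CR$-chart $(U,z_1,\hdots,z_{\nuup})$ and identify a neighbourhood of $p_0$ in $M$ with a generic $CR$ submanifold of $\mathbb{C}^{\nuup}$ of type $(n,k)$. Under this identification a germ $(S,p_0)$ of a $CR$ submanifold of type $(0,\nuup)$ is the germ of a \emph{maximal totally real} submanifold of $\mathbb{C}^{\nuup}$ contained in $M$, since $\dim_{\mathbb{R}}S=\nuup=\dim_{\mathbb{C}}\mathbb{C}^{\nuup}$ and $T_pS\cap iT_pS=\{0\}$; moreover $\Ot_{M,(p_0)}$ is the ring of germs at $p_0$ of continuous (a fortiori $\mathcal{C}^1$) $CR$ functions, which is what we will use.

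Next I would bring in minimality. Since $M$ is minimal at $p_0$, Tumanov's wedge extension theorem provides an open neighbourhood $M_0$ of $p_0$ in $M$, an open wedge $W\subset\mathbb{C}^{\nuup}$ with edge $M_0$, and, for the given $CR$ function $f$, a function $\tilde f$ holomorphic on $W$ and continuous on $W\cup M_0$ with $\tilde f|_{M_0}=f$. The first assertion then amounts to: a holomorphic function on such a wedge, continuous up to its generic edge $M_0$ and vanishing on a maximal totally real submanifold $S\subset M_0$ through $p_0$, vanishes near $p_0$. I would prove this by the standard uniqueness mechanism for this configuration: along the complex-tangential directions $\tilde f$ is holomorphic and $S$ cuts out maximal totally real (hence uniqueness) sets, while in the remaining $k$ directions the wedge $W$ furnishes a one-sided, edge-of-the-wedge type uniqueness argument from the vanishing on $S$; the bookkeeping $\dim S=\nuup$, $\dim M_0-\dim S=n$, $\operatorname{codim}_{\mathbb{R}}M_0=k$, $n+k=\nuup$ is exactly what makes these two inputs dovetail. (Equivalently, one may simply quote the weak unique continuation theorem for $CR$ functions on minimal locally embeddable $CR$ manifolds.) This yields $\tilde f\equiv0$ on $W$, hence $f\equiv0$ near $p_0$, i.e. the germ is $0$.

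For the second assertion I would argue by connectedness on a connected $M$. Let $\Omega$ be the open, non-empty set of points near which the $CR$ function $f$ vanishes identically; it suffices to show $\Omega$ is closed. Take $p\in\overline{\Omega}$; by Tumanov's theorem again, $f$ extends to $\tilde f$ holomorphic on a wedge $W$ whose full edge is a neighbourhood $M_0$ of $p$ in $M$, continuous up to $M_0$, and $\tilde f$ vanishes on the non-empty open subset $\Omega\cap M_0$ of the generic edge. Attaching to $\Omega\cap M_0$ the usual family of small analytic discs with boundary in $\Omega\cap M_0$ and interior in $W$ — such discs fill a neighbourhood in $W$ of a point of $\Omega\cap M_0$ — the maximum principle forces $\tilde f$ to vanish on a non-empty open subset of $W$, hence on all of $W$ since $W$ is connected, hence $f\equiv0$ on $M_0$. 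Thus $p\in\Omega$, so $\Omega$ is open and closed, whence $\Omega=M$ and $f\equiv0$.

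The main obstacle is the extension step: passing from ``$f$ is $CR$ on $M$'' to ``$f$ is the trace on $M_0$ of a function holomorphic on a wedge'' is precisely where minimality is indispensable, via Tumanov's theorem, and without it both assertions fail, as the non-minimal model $\mathbb{C}^n\times\mathbb{R}^k$ already shows (a bump function in the $\mathbb{R}^k$ variables is $CR$, vanishes on an open set and on suitable maximal totally real submanifolds, yet is not identically $0$). The remaining ingredients — the local embedding, the dimension count, and the analytic-disc and connectedness arguments — are routine once the wedge extension is available.
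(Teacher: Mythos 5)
Your proposal is correct and follows essentially the same route as the paper: reduce via the local embedding to a generic $CR$ submanifold of $\mathbb{C}^{\nuup}$, use minimality and Tumanov's theorem to extend the $CR$ function holomorphically to a wedge, and then invoke a boundary-uniqueness (edge-of-the-wedge type) theorem for the vanishing on the maximally totally real germ $(S,p_0)$, with a connectedness argument for the global weak unique continuation. The only differences are matters of detail: the paper simply cites Pinchuk--Khasanov where you sketch the uniqueness mechanism and use attached analytic discs plus the maximum principle for the open-closed argument, and your parenthetical suggestion to ``quote weak unique continuation'' would not by itself give the first assertion (vanishing on $S$ is not vanishing on an open set), but your main argument does not rely on it.
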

\begin{proof}
In the first part of the proof, we can assume that $M$ is a generic
$CR$ submanifold of an open set in $\mathbb{C}^{\nuup}$. For
any open neighborhood $U$ of $p_0$ in $M$, 
there are an open neighborhood 
$U_0$ of $p_0$ in $U$, and an open wedge $W$ in $\mathbb{C}^{\nuup}$,
with edge $U_0$, such that, the restriction $u|_{U_0}$
of any 
$u\in{\Ot}_M(U)$  is the boundary value of
a holomorphic
function $\tilde{u}$, defined  
on  $W$ (see \cite{Tu88, Tu90, BR90}). 
Assume now that $u\in\Otm(U)$ vanishes on $S$.
Then $\tilde{u}=0$ by the edge of the wedge theorem (see \cite{PK87}),
and therefore $u=0$.
\par
The last statement follows by unique continuation 
for holomorphic functions 
on open subsets of $\mathbb{C}^{\nuup}$.
\end{proof}
\subsection{$CR$-maps with simple singularities}\label{sec:5xa}
Let $M,N$ be $CR$ manifolds.
A smooth map $\piup:M\to{N}$ is
$CR$ if $d\piup(T^{0,1}M)\subset{T}^{0,1}N$. We say that $\piup$ is
\begin{list}{-}{}
\item a $CR$-immersion if $\ker{d\piup}=0$ and 
$d\piup(T^{0,1}M)=d\piup(\mathbb{C}TM)\cap{T}^{0,1}N$;
\item a $CR$-submersion if $d\piup(T_pM)=T_{\piup(p)}N$ and
$d\piup(T^{0,1}_pM)={T}^{0,1}_{\piup(p)}N$, $\forall p\in{M}$;
\item a local $CR$-diffeomorphism if it is at the same time 
a $CR$-immersion and a $CR$-submersion.
\end{list} \par
Next we consider critical points of some $CR$-maps.\par
Let $k\geq{1}$ and $\piup:M\to{N}$ a $CR$-map, with 
$M$ of type $(n,k)$ and $N$ of type $(n,k-1)$.
\par
If $p_0\in{M}$ is not a critical point of $\piup$, then
$\piup$ is a $CR$-submersion near $p_0$.  
\par
Assume now that 
$p_0$ is a critical point of $\piup$, and $q_0=\piup(p_0)$ the corresponding
critical value. Then the rank of $d\piup(p_0)$ is less than $2n+k-1$.
Assume that it is exactly equal to $2n+k-2$.
Then the dual map $d\piup^*(p_0):T^*_{q_0}N\to{T}^*_{p_0}M$ is not injective,
and has a $1$-dimensional kernel. 
\begin{defn}
 If $\ker{d}\piup^*(p_0)\cap{H}^0_{q_0}N=\{0\}$,
we say that $\piup$ has at $p_0$ a \emph{$CR$-noncharacteristic}
singularity.
\end{defn}
Assume that this is the case and fix
$0\neq\eta_0\in\ker{d}\piup^*(p_0)$. Then there is 
$\eta'_0$, uniquely determined modulo $H^0_{q_0}N$, such that
$\eta_0+i\eta_0'\in\mathbf{Z}^0_{q_0}N$, and we obtain an element
$\xiup_0\in{H}^0_{p_0}M$, with
$0\neq\xiup_0=d\piup^*(p_0)(\eta_0')$.
\begin{defn} If we can choose $\eta_0'$ in such a way that
$\mathbf{L}_{\xiup_0}$ has $1$ positive and $n-1$ negative eigenvalues,
we say that $\piup$ has a \emph{Lorentzian $CR$-non characteristic
singularity} at $p_0$.   
\end{defn}
Assume now that $M$ and
$N$ are locally $CR$-embeddable at $p_0$, $q_0$, respectively, and that 
$\mathbf{L}_{\xiup_0}$ has $1$ positive and $(n-1)$ negative eigenvalues.
We set $\nuup=n+k$. 
We can choose $CR$-charts $(U;z_1,\hdots,z_{\nuup})$ of $M$, centered 
at $p_0$, and $(W;w_2,\hdots,w_{\nuup})$ of $N$, centered at $q_0$, 
with $\piup(U)\subset{W}$ and $z_j=\piup^*w_j$ for $j=2,\hdots,\nuup$,
such that $i\xiup_0=dz_{\nuup}(p_0)$, and 
\begin{equation*}
  \im{z}_{\nuup}=h(z) \quad\text{on $U$, with $h(z)=z_1\bar{z}_1-
{\sum}_{i=2}^{\nuup}z_i\bar{z}_i+O(|z|^3)$}.
\end{equation*}

\begin{lem} \label{lem516}
Let $D=\{p\in{U}\mid z_1(p)=0\}$. Then, there is
an open neighborhood $U'$ of $p_0$ in $U$, an open neighborhood
$\omega$ of $q_0$ in $N$, 
and an open domain $\omega_-$ in $\omega$, 
with $q_0\in\partial\omega_-$, such that
\begin{enumerate}
\item
 ${\omega}_-\subset \piup(U')\subset\omega$,\;\;
$\piup(D\cap{U}')\subset\partial\omega$ \;\; $\text{and
$\omega$ is strictly pseudoconcave at $q_0$}$;
\item $\piup:U'\to{N}$ is proper and, for $q\in\piup(U')$,
$\piup^{-1}(q)$ is either a point or is diffeomorphic to a 
circle. 
\end{enumerate}
\end{lem}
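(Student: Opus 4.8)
My plan is to argue entirely in the two $CR$-charts already fixed, so that $M$ and $N$ are generic $CR$ submanifolds of neighbourhoods of $0$ in $\mathbb{C}^{\nuup}$ and $\mathbb{C}^{\nuup-1}$, and $\piup$ is the restriction to $M$ of the coordinate projection $(z_1,\dots,z_{\nuup})\mapsto q:=(z_2,\dots,z_{\nuup})$. Of the $k$ real defining functions of $M$, $k-1$ are pulled back from those of $N$ and do not involve $z_1$, while the remaining one is $\im z_{\nuup}-h(z)$; hence for $q\in N$ near $0$ the fibre $\piup^{-1}(q)$ is the level set $\{\,z_1\mid h(z_1,q)=\im q_{\nuup}\,\}$ of the single real function $z_1\mapsto h(z_1,q)$ on a small disc. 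From the normal form $h(z_1,q)=|z_1|^{2}-\sum_{i=2}^{\nuup}|q_i|^{2}+O(|z|^{3})$ I would read off that, for each small $q$, this function has gradient $2z_1+O((|z_1|+|q|)^{2})$, hence exactly one critical point $z_1^{*}(q)=O(|q|^{2})$ on the disc $\{|z_1|<\delta_1\}$, and that this point is a nondegenerate minimum with value $-\mu(q)$, where $\mu(q)=\sum_{i=2}^{\nuup}|q_i|^{2}+O(|q|^{3})\ge 0$ (the quadratic part being positive definite). By the Morse lemma the level set $h(\cdot,q)^{-1}(t)$ on the disc is then empty, the point $z_1^{*}(q)$, or a smoothly embedded circle, according as $t$ lies below, at, or above $-\mu(q)$ (the value $t=\im q_{\nuup}=O(|q|)$ always lying well below the values of $h(\cdot,q)$ on $\{|z_1|=\delta_1\}$). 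This already shows that every nonempty fibre of $\piup$ near $p_0$ is a point or a circle, and that the fibre over $q_0$ is $\{p_0\}$.

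Next I would introduce the sets $\omega$ and $\omega_-$. Because $p_0$ is a critical point of $\piup$ of corank one, $\ker d\piup(p_0)$ is two-dimensional, and $dz_1$ restricts to a linear isomorphism of it onto $\mathbb{C}$ --- a real vector of this kernel killed by $dz_1$ would be annihilated by all of $dz_1,\dots,dz_{\nuup}$, hence would lie in $T^{0,1}_{p_0}M\cap\overline{T^{0,1}_{p_0}M}=0$. In particular $\ker d\piup(p_0)$ is transverse to $D=\{z_1=0\}$, so $\piup|_D$ is an immersion at $p_0$ and, for $U'$ small, $\piup(D\cap U')=\{\,q\in N\mid (0,q)\in M\,\}$ is the smooth real hypersurface $\{c(q)=0\}$ of $N$, with $c(q):=\im q_{\nuup}-h(0,q)=\im q_{\nuup}+\sum_{i=2}^{\nuup}|q_i|^{2}+O(|q|^{3})$. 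I would take $\omega:=\{\,q\in N\mid |q|<\delta_2,\ c(q)>0\,\}$, a one-sided neighbourhood of $q_0$ with $q_0\in\partial\omega$ and $\partial\omega\cap\{|q|<\delta_2\}=\piup(D\cap U')$, and let $\omega_-$ be $\omega$ (or a slightly smaller subdomain with $q_0$ on its boundary). Since $h(0,q)\ge\min_{z_1}h(\cdot,q)=-\mu(q)$, the fibres over $\omega$ are all circles. Finally the complex Hessian of $c$ at $q_0$ coincides with that of $\sum_{i=2}^{\nuup}|q_i|^{2}$, hence is positive definite, and stays positive definite when restricted to $T^{1,0}_{q_0}N\cap\ker\partial c(q_0)$; equivalently the Levi form of the defining function $-c$ of $\omega$ is negative definite at $q_0$, i.e.\ $\omega$ is strictly pseudoconcave at $q_0$.

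To conclude I would fix $\delta_1$ small and then $\delta_2$ with $\delta_2\ll\delta_1^{2}$, and put $U'=\{\,p\in U\mid |z_1(p)|<\delta_1\,\}\cap\piup^{-1}(\{|q|<\delta_2\})$. Every nonempty fibre over $\{|q|<\delta_2\}$ is a point $z_1^{*}(q)=O(|q|^{2})$ or a circle about it of radius $O(\sqrt{\delta_2})$, hence lies inside $\{|z_1|\le\tfrac{1}{2}\delta_1\}$; this gives simultaneously that $\piup:U'\to N$ is proper, that $\omega\subset\piup(U')$, and that $\piup(D\cap U')=\{c=0\}\subset\partial\omega$, which together with the pseudoconcavity of $\omega$ establishes (1), while the Morse description of the fibres establishes (2). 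I expect the genuinely delicate point to be not any single estimate but making the pieces of geometry fit together: above all, ensuring that the pseudoconcave side of $\piup(D\cap U')$ is the side containing $\piup(U')$ --- this is precisely where the Lorentzian hypothesis is used, for it is what forces the normal form in which \emph{every} variable $z_2,\dots,z_{\nuup}$ occurs in $h$ with coefficient $-1$, so that the quadratic part of $c$ is the full positive-definite form $\sum_{i=2}^{\nuup}|q_i|^{2}$; and, secondarily, the bookkeeping of the inclusions near $\partial\omega$, where (away from $q_0$) a fibre may degenerate to a small circle of radius $O(|q|^{2})$ rather than to a point, which is why the scaling $\delta_2\ll\delta_1^{2}$ is needed so that no fibre reaches the boundary $\{|z_1|=\delta_1\}$ of the disc.
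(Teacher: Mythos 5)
In substance your argument is the same as the paper's. The paper's proof also works entirely in the two charts: it observes that $\piup|_D$ is a diffeomorphism onto a hypersurface of a neighbourhood $\omega$ of $q_0$ in $N$, that on the side $\omega_-=\{\im w_{\nuup}+\sum_{i=2}^{\nuup}w_i\bar w_i+O(|w|^3)>0\}$ one has $\omega_-\subset\piup(U)$ with $\omega_-$ strictly pseudoconcave at $q_0$, and that the quadratic inequality $\im z_{\nuup}+\tfrac32\sum_{i=2}^{\nuup}z_i\bar z_i\ge\tfrac12 z_1\bar z_1$ on a small $U$ controls $|z_1|$ on the fibres and yields properness and the point/circle dichotomy after setting $U'=U\cap\piup^{-1}(\omega)$. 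Your Morse-lemma analysis of $h(\cdot,q)$ and the scaling $\delta_2\ll\delta_1^2$ are just a more explicit rendering of that inequality, and your Levi-form computation for $c$ is the paper's pseudoconcavity remark.

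The one point where your write-up does not deliver the stated conclusions is the choice of $\omega$. You take $\omega=\{q:|q|<\delta_2,\ c(q)>0\}$ one-sided and set $\omega_-=\omega$; but then the inclusion $\piup(U')\subset\omega$ demanded in (1) fails in general: the minimum of $h(\cdot,q)$ is attained at $z_1^*(q)=O(|q|^2)$, which need not be $0$, so $-\mu(q)\le h(0,q)$ with strict inequality unless the $z_1$--$q$ cross terms in the remainder vanish, and over the thin region $\{-\mu(q)\le\im q_{\nuup}<h(0,q)\}$ (where $c\le0$) the fibre is a point or a small circle avoiding $z_1=0$; such $q$ lie in $\piup(U')$ but not in your $\omega$, nor on $\{c=0\}$. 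The intended reading (forced by the paper's proof, since the statement as printed is self-contradictory: $q_0\in\piup(D\cap U')\subset\partial\omega$ cannot hold if $\omega$ is an open neighbourhood of $q_0$) is that $\omega$ is a genuine two-sided neighbourhood of $q_0$, that $\piup(D)$ splits it into $\omega_\pm$, and that the strict pseudoconcavity is a property of $\omega_-$ at $q_0\in\partial\omega_-$. This two-sidedness is not cosmetic: in the proof of Lemma~\ref{lemr54} one needs $\piup(V)\cup(\omega\setminus\piup(U))$ to be a neighbourhood of $q_0$ in $N$, which requires $\omega$ to contain the other side $\omega_+$ as well. The repair costs nothing: keep your analysis, take $\omega=\{q\in N:|q|<\delta_2\}$ and $\omega_-=\{c>0\}\cap\omega$; then $\omega_-\subset\piup(U')\subset\omega$, $\piup(D\cap U')\subset\partial\omega_-$, and the pseudoconcavity and fibre statements you proved give (1) and (2) exactly as in the paper.
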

\begin{proof} 
Provided $U$ is sufficiently small, 
the restriction of $\piup$ to $D$ is a smooth diffeomorphism of $D$
onto a closed hypersurface $\piup(D)$ 
in an open neighborhood $\omega$ of $q_0$ in $N$. 
By further shrinking, we can assume that 
$A\setminus \piup(D)$ consists of two connected components 
$\omega_+$ and $\omega_-$ and that
$\omega_-\subset\piup(U)$.
\par
Since $\omega_-=\{\im{w_{\nu}}+{\sum}_{i=2}^{\nuup}w_i\bar{w}_i+O(|w|^3)>0\}$
near $q_0$, we have $q_0\in\partial\omega_-$ and $\omega_-$
strictly pseudoconcave at $q_0$. Moreover, by taking $U$ small,
we can assume that 
\begin{equation*}
  \im{z}_{\nuup}+\tfrac{3}{2}{\sum}_{i=2}^{\nuup}
z_i\bar{z}_i\geq \tfrac{1}{2} {z}_1\bar{z}_1 \quad\text{on $U$},
\end{equation*}
and therefore we obtain an $U'$ satisfying (1) and (2) by setting
$U'=U\cap\piup^{-1}(\omega)$ for a smaller neighborhood
$\omega$ of $q_0$ in $N$. 
\end{proof}
\subsection{Perturbation of the $CR$ structure of $M$}
We keep the notation of \S\ref{sec:5xa}, and we shall assume that
(1) and (2) of Lemma\,\ref{lem516} hold true with $U'=U$.
\begin{lem}\label{lemr54} 
Assume that $N$ is a minimal $CR$ manifold.
If $u$ is a $CR$ function on a connected open neighborhood $V$
of $p_0$ in $U$, then
\begin{equation}
  \label{eq:r1}
  g(q):=\oint_{\piup^{-1}(q)}u\, dz_1=0,\quad\forall q\in\piup({V}).
\end{equation}
\end{lem}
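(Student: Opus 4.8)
The plan is to follow verbatim the argument of Lemma~\ref{lem51} (and of its codimension–one incarnation Lemma~\ref{lem41}), with the integral of $u\,dz_1$ over the circles $\piup^{-1}(q)$ in the role of the integral of a top form over the spheres $M_w$, and with the weak unique continuation property of Lemma~\ref{lem:3.1} taking over the job done there by Holmgren's theorem. Shrinking $U$ if necessary we may assume that $\piup$ is a proper submersion over $\omega_-$ with circle fibres (cf. Lemma~\ref{lem516}(2)), and, replacing $V$ by its $\piup$-saturation, that $V=\piup^{-1}(\piup(V))$; then $g$ is a well defined $\mathcal{C}^1$ function on $\piup(V)\cap\omega_-$, and it vanishes identically on $\piup(V)\cap\piup(D)$, where the fibres are points. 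By a further shrinking we may suppose $\piup(V)\cap\omega_-$ connected and, near $q_0$, simply connected, so that the circle fibres carry a coherent orientation.

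The first point is that \emph{$g$ is a $CR$ function on $\piup(V)\cap\omega_-$}. Since $u\in\Otm(V)$, $du$ annihilates $T^{0,1}M$, and $dz_1,\dots,dz_{\nuup}$ is a smooth frame for the annihilator of $T^{0,1}M$ in $\mathbb{C}T^{*}M$; hence $du={\sum}_{j=1}^{\nuup}a_j\,dz_j$ with continuous coefficients $a_j$ on $V$. Using $dz_j=\piup^{*}dw_j$ for $j\geq 2$ we get
\begin{equation*}
d(u\,dz_1)=du\wedge dz_1=-{\sum}_{j=2}^{\nuup}a_j\,dz_1\wedge\piup^{*}dw_j .
\end{equation*}
Integration over the closed oriented fibres commutes with $d$, so, by the projection formula,
\begin{equation*}
dg=\piup_{*}\bigl(d(u\,dz_1)\bigr)=-{\sum}_{j=2}^{\nuup}\Bigl({\oint}_{\piup^{-1}(q)}a_j\,dz_1\Bigr)dw_j\in\langle dw_2,\dots,dw_{\nuup}\rangle ,
\end{equation*}
which says exactly that $g$ is annihilated by $T^{0,1}N$. (This is the Stokes/Morera computation of Lemma~\ref{lem51} reorganized as a fibre integration: for a simple closed curve $\kappa$ bounding a disc $\Delta\subset\omega_-$ one has $\int_{\piup^{-1}(\Delta)}du\wedge dz_1=0$ by the $CR$ condition on $u$, and unwinding this through Stokes gives the Cauchy relations for $g$.)

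Next I would check that $g$ extends continuously to $\piup(V)\cap\overline{\omega_-}$, with boundary value $0$ on $\piup(V)\cap\piup(D)$: as $q\to q_0$ the circle $\piup^{-1}(q)$ shrinks to the point $p_0$, so $|g(q)|$ is bounded by $\sup|u|$ times the length of the curve $z_1(\piup^{-1}(q))\subset\mathbb{C}$, which tends to $0$. Define $\tilde g$ on a small neighborhood $\omega'$ of $q_0$ in $N$ by $\tilde g=g$ on $\omega'\cap\omega_-$ and $\tilde g=0$ on $\omega'\cap(\omega_{+}\cup\piup(D))$. Then $\tilde g$ is continuous, $CR$ off $\piup(D)$, and its one–sided limits along $\piup(D)$ coincide (both vanish); since $\piup(D)$ is noncharacteristic for the $CR$ structure of $N$ (equivalently, $\omega_-$ is strictly pseudoconcave at $q_0$), a Morera/jump argument shows that $\tilde g$ is $CR$ on all of $\omega'$. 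As $\tilde g\equiv 0$ on the nonempty open set $\omega'\cap\omega_{+}$ and $N$ is minimal and locally $CR$-embeddable near $q_0$, Lemma~\ref{lem:3.1} (whose proof applies equally to continuous $CR$ functions) gives $\tilde g\equiv 0$, hence $g\equiv 0$ on $\omega'\cap\omega_-$. A second application of the weak unique continuation of Lemma~\ref{lem:3.1}, on the connected set $\piup(V)\cap\omega_-$, then yields $g\equiv 0$ there; together with the vanishing on $\piup(V)\cap\piup(D)$ this proves \eqref{eq:r1}.

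The crux is this last step. In the model situations of Lemmas~\ref{lem51} and~\ref{lem41} the base is an open subset of $\mathbb{C}^{n}$ and one concludes at once by Holmgren's theorem for the constant–coefficient operator $\bar\partial$; over a genuine $CR$ manifold $N$ no such classical tool is available, and it is precisely here that the hypothesis that $N$ be \emph{minimal} enters — through the wedge extendability of $CR$ functions on minimal, locally embeddable manifolds, i.e. through Lemma~\ref{lem:3.1}. The remaining ingredients — discarding critical values away from $\omega_-$, saturating $V$, orienting the fibres, and keeping the auxiliary neighborhoods small and connected — are routine and only cost further shrinkings of $U$.
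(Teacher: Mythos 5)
Your proof is correct, and its skeleton is the same as the paper's: extend $g$ by zero past the locus where the fibres degenerate, get continuity because the circles shrink to points, show the extended function is CR, and conclude by the weak unique continuation of Lemma~\ref{lem:3.1}, with minimality of $N$ playing exactly the role you indicate (the substitute for Holmgren's theorem in Lemmas~\ref{lem51} and~\ref{lem41}). The one step you execute differently is the CR-ness of the extension. You prove it classically on the circle-bundle part of $\piup(V)$ by fibre integration ($d\piup_{*}=\piup_{*}d$, which needs local triviality of $\piup$ over $\omega_-$) and then glue across $\piup(D)$ by a continuity/jump argument; the paper instead verifies the \emph{weak} CR condition in one stroke, testing the zero-extended $g$ against compactly supported forms $\etaup\in\varOmega^{2n+k-2}_0(W)\cap\Jt_N^{n+k-1}(W)$: properness and CR-ness of $\piup$ give $\piup^{*}\etaup\in\varOmega^{2n+k-2}_0(V)\cap\Jt_M^{n+k-1}$, and a single Stokes/Fubini computation yields $\int_N dg\wedge\etaup=\int_M du\wedge dz_1\wedge\piup^{*}\etaup=0$ since $u$ is CR. The paper's formulation thus handles the degenerate fibres and the passage into the region where $g\equiv 0$ simultaneously, with no appeal to Ehresmann or to a separate removable-hypersurface lemma, while your version is more elementary pointwise but leaves the gluing step asserted rather than proved. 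Two small corrections to your write-up: the gluing across $\piup(D)$ works because the jump of the continuous function vanishes (so the potential layer term in the distributional derivative along $\piup(D)$ is zero), not because $\piup(D)$ is noncharacteristic; and one cannot literally replace $V$ by $\piup^{-1}(\piup(V))$, on which $u$ need not be defined --- what one takes is the union of the fibres entirely contained in $V$, which is open by properness and is a neighborhood of $p_0$ because the fibre through $p_0$ is a single point.
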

\begin{proof} First we note that $W=\piup(V)\cup(\omega\setminus\piup(U))$
is a neighborhood of $q_0$ in $N$. The function $g$, equal to the left
hand side of \eqref{eq:r1} for $w\in\piup(V)$ and $0$ on 
$W\setminus\piup(V)$ is continuous,
because  the fiber $\piup^{-1}(q)$ shrinks to a point when
$q\to \partial{\piup(V)}\cap\omega$. 
Since $\piup(V)$ is connected and its connected component in $W$ contains
an open subset where $g=0$, our contents follows by the
weak unique continuation principle (see Lemma\,\ref{lem:3.1})
if we show that $g$ is a $CR$ function on $W$. 
To this aim, it suffices to
show that
\begin{equation*}
  \int_N dg\wedge\eta=0,\quad\forall \eta\in\varOmega^{2n+k-2}_0(W)\cap\Jt_N^{n+k-1}(W),
\end{equation*}
where $\varOmega^{*}_0(W)$ means smooth exterior forms with compact support
in $W$. We note that 
$\piup^*\eta\in\varOmega^{2n+k-2}_0(V)\cap\Jt_M^{n+k-1}(W)$, because the map
$\piup$ is $CR$ and proper. Thus we obtain
\begin{equation*}
  \int_N dg\wedge\eta=\int_M du\wedge{dz_1}\wedge\piup^*\eta =0,
\end{equation*}
because $u$ is $CR$ on a neighborhood of the support of
${dz_1}\wedge\piup^*\eta\in\varOmega^{2n+k-2}_0(V)\cap\Jt_M^{n+k}(V)$. 
The proof is complete.
\end{proof}
By shrinking, we get
$2\,z_1\bar{z}_1\geq\im{z}_{\nuup}$ on $U$.
In particular, if $\psi$ is a smooth function of one complex variable
$\tauup$, with $\supp\psi\subset\{\im\tauup\geq{0}\}$, the function
$z_1^{-1}\psi(z_{\nuup})$ can be extended to a smooth function on $U$, vanishing to
infinite order on $\{z_1=0\}\cap{U}$. 
\begin{lem}\label{lemr32a}
If $\psi_i$, for $i=1,\hdots,\nuup$ are smooth fuctions of a complex variable
$\tauup$, with support contained in $\{\im\tauup\geq{0}\}$, then
\begin{equation}
  \label{eq:52a}
  \thetaup_1=dz_1+z_1^{-1}{\psi_1({z_{\nuup}})}d{\bar{z_{\nuup}}},
\;\hdots,\; \thetaup_{\nuup}=
dz_{\nuup}+z_1^{-1}{\psi_{\nuup}({z_{\nuup}})}d{\bar{z_{\nuup}}}
\end{equation}
(the functions $z_1^{-1}{\psi_i({z_{\nuup}})}$ are put $=0$ for $z_1=0$) 
generate the ideal sheaf $\Jt_{U'}'$ of a $CR$ structure of type $(n,k)$
in a neighborhood $U'$ of $p_0$ in $U$, which agree to infinite order with
the original one at $p_0$. 
\end{lem}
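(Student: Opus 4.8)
The statement and its proof coincide verbatim with those of Lemma~\ref{lem32a}: the enlargement of the $CR$-codimension from $1$ to $k$ and the fact that the generic fibre of $\piup$ is now a circle inside a larger sphere are irrelevant here, since what must be checked is the purely local algebraic assertion that the $1$-forms $\thetaup_1,\dots,\thetaup_\nuup$ generate an involutive ideal defining a structure of the right type. So the plan is to repeat that argument. The crucial preliminary point is that each $z_1^{-1}\psi_i(z_\nuup)$, extended by $0$ along $\{z_1=0\}$, is $\mathcal{C}^\infty$ on $U$ and flat on $\{z_1=0\}$. This I would deduce from the standing inequality $2\,z_1\bar z_1\ge\im z_\nuup$ on $U$, recorded just before the statement: a smooth $\psi_i$ supported in $\{\im\tauup\ge 0\}$ vanishes on the open set $\{\im\tauup<0\}$, hence is flat along $\{\im\tauup=0\}$, so that on the relevant locus $0\le\im z_\nuup\le 2|z_1|^2$ one has $|\psi_i(z_\nuup)|\le C_N(\im z_\nuup)^N\le C_N(2|z_1|^2)^N$ for all $N$, and likewise for derivatives; thus $z_1^{-1}\psi_i(z_\nuup)$ and all its derivatives are controlled by powers of $|z_1|$. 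In particular each $\thetaup_i-dz_i$ is flat at $p_0$, whence $\Jt_{U'}'$, the ideal generated by $\thetaup_1,\dots,\thetaup_\nuup$, agrees to infinite order at $p_0$ with the ideal generated by $dz_1,\dots,dz_\nuup$, i.e.\ with $\Jtm$.

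Next I would verify that $\Jt_{U'}'$ is the ideal of a $CR$ structure of type $(n,k)$ on a small enough $U'\Subset U$. Since $dz_1,\dots,dz_\nuup,d\bar z_1,\dots,d\bar z_n$ form a coframe near $p_0$ and the perturbations $z_1^{-1}\psi_i(z_\nuup)\,d\bar z_\nuup$ are flat at $p_0$, the $m$ forms $\thetaup_1,\dots,\thetaup_\nuup,\bar\thetaup_1,\dots,\bar\thetaup_n$ remain linearly independent on some $U'$; hence the characteristic system $\Zt'$ of $\Jt_{U'}'$ is a distribution of rank $m-\nuup=n$, and $\Zt'\cap\overline{\Zt'}=0$ because those $m$ forms span $\mathbb{C}T^*M$. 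So $\Zt'$ has type $(n,k)$.

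Finally, for involutivity I would use that $z_1^{-1}\psi_i(z_\nuup)$ is holomorphic in $z_1$ and depends only on $z_1,z_\nuup$, so that
\begin{equation*}
  d\thetaup_i=z_1^{-1}\frac{\partial\psi_i(z_\nuup)}{\partial z_\nuup}\,dz_\nuup\wedge d\bar z_\nuup-z_1^{-2}\psi_i(z_\nuup)\,dz_1\wedge d\bar z_\nuup ,
\end{equation*}
a form lying in $\langle dz_1,dz_\nuup\rangle\wedge d\bar z_\nuup$. Wedging first by $d\bar z_\nuup$ turns each $\thetaup_j$ into $dz_j$, so
\begin{equation*}
  d\thetaup_i\wedge\thetaup_1\wedge\cdots\wedge\thetaup_\nuup=d\thetaup_i\wedge dz_1\wedge\cdots\wedge dz_\nuup=0\qquad(i=1,\dots,\nuup),
\end{equation*}
the second equality because $dz_1$ and $dz_\nuup$ both already occur. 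By the standard criterion this yields $d\Jt_{U'}'\subset\Jt_{U'}'$. I do not expect a genuine obstacle: the higher-codimension hypotheses are used only in the fibration geometry of $\piup$ (Lemmas~\ref{lem516},~\ref{lemr54}), not in this local computation. The one point deserving care is that the normalisation of the $CR$-charts of $M$ and $N$ in \S\ref{sec:5xa} does force $2\,z_1\bar z_1\ge\im z_\nuup$ on the shrunken $U$, since it is this inequality that legitimises the extension of $z_1^{-1}\psi_i(z_\nuup)$ in the first step.
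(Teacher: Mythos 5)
Your proof is correct and takes essentially the same route as the paper's own proof (which just repeats the argument of Lemma~\ref{lem32a}): flatness of $z_1^{-1}\psi_i(z_{\nuup})$ along $\{z_1=0\}$ from the support condition together with $2\,z_1\bar z_1\ge\im z_{\nuup}$, linear independence of $\thetaup_1,\hdots,\thetaup_{\nuup},\bar\thetaup_1,\hdots,\bar\thetaup_n$ on a shrunken $U'$ to get a structure of type $(n,k)$ agreeing to infinite order at $p_0$, and the computation $d\thetaup_i\wedge\thetaup_1\wedge\cdots\wedge\thetaup_{\nuup}=d\thetaup_i\wedge dz_1\wedge\cdots\wedge dz_{\nuup}=0$ for involutivity. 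The only difference is that you make explicit the Taylor estimates behind the smoothness/flatness of $z_1^{-1}\psi_i(z_{\nuup})$, which the paper leaves implicit.
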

\begin{proof} By the condition 
on the supports, the functions 
$z_1^{-1}{\psi_i(z_{\nuup})}$ are smooth on $U$ and
vanishing to infinite for $z_1=0$, and in particular at $p_0$. 
Thus $\thetaup_1$, $\hdots$, $\thetaup_\nuup$, $\bar{\thetaup}_1$,
$\hdots$, $\bar{\thetaup}_n$ yield a basis of $\mathbb{C}T_pM$ for $p$
in a suitable neighborhood $U'$ of $p_0$, and agree with
$dz_1, \hdots, dz_{\nuup}, d\bar{z}_1,\hdots,d\bar{z}_n$ to infinite order
at $p_0$. 
We have moreover
\begin{equation*}
  d\thetaup_i=z_1^{-1}
\dfrac{\partial{\psi_i({z_{\nuup}})}}{\partial{{z_{\nuup}}}}d{z_{\nuup}}\wedge
d{\bar{z}_{\nuup}}-{z_1^{-2}}\psi_i({z_{\nuup}})dz_1\wedge{d}{\bar{z}_{\nuup}}.
\end{equation*}
Hence
\begin{equation*}
  d\thetaup_i\wedge\thetaup_1\wedge\cdots\wedge\thetaup_{\nuup}=
d\thetaup_i\wedge{dz_1}\wedge\cdots\wedge{d}z_{\nuup}=0
\end{equation*}
shows that $\Jt'_{U'}$ is involutive. The proof is complete.
\end{proof}
Let us fix a sequence of distinct complex numbers
$\{\tauup_j\}$, such that
\begin{list}{}{}
\item $\im\tauup_j>0$ for all $j$, 
\quad $\tauup_j\to{0}$, \quad $\{w_{n}=\tauup_j\}
\cap\omega\neq\emptyset$ for all $j$.
\end{list}  
For each $j$ we choose an open disk
$\Delta_j$  in $\mathbb{C}$, centered
at $\tauup_j$, in such a way that
$\bar{\Delta}_j\cap{\bigcup}_{i\neq{j}}\bar{\Delta_i}=\emptyset$. Next
we choose
balls $B_j$ in $\mathbb{C}^{\nuup-2}$ with 
\begin{equation*}
  K_j=\{q\in\omega\mid (w_2(q),\hdots,w_{\nuup-1}(q))\in{B}_j,\;\; w_{\nuup}\in
\bar{\Delta}_j\}\Subset\omega.
\end{equation*}
We set $\partial_0K_j=\{q\in{K}_j\mid w_\nuup(q)\in\partial\Delta_j\}$,
$A_j=\piup^{-1}(K_j)$. Note that the $A_j$'s are compact because
$\piup$ is proper.\par
Then
we take the functions $\psi_i$ in Lemma\,\ref{lemr32a} in such a way that,
for suitable forms $\eta_j\in\varOmega_0^{n-1}(B_j)$, we have 
\begin{gather*}
  \supp\psi_i={\bigcup}_{j=0}^{\infty}\bar{\Delta}_{i+j{(\nuup+1)}},\quad
\text{for}\quad i=1,\hdots,\nuup ,\\
 c_{i+j(\nuup+1)}=\int_{A_{i+j(\nuup+1)}}
z_1^{-1}\psi_i(z_{\nuup})
d\bar{z}_{\nuup}\wedge{d}z_1\wedge\cdots\wedge{d}z_{\nuup}\wedge\piup^*\eta_j
\;\;\text{is real and $>0$.}
\end{gather*}
\par
Let $u$ be a $CR$ function on a
connected open neighborhood $V$ of $p_0$ in
$U'$ for the structure defined by
\eqref{eq:52a}.
Since $\Jtm$ and $\Jt_{U'}'$ agree to infinite order on $\piup(U')$ 
outside $E=\{w_\nuup\in{\bigcup}_{i}\supp{\psi_i}\}$, and this set
does not disconnect $\piup(V)$, by the argument of Lemma\,\ref{lemr54} 
we have \eqref{eq:r1} for all $q$ in the complement in $\piup(V)$ of
$E$. 
Thus we obtain
\begin{align*}
  0&=\int_{\partial{K}_j}\left(\oint_{\piup^{-1}(q)}
\!\!\!
u \,dz_1\right){dw}_2\wedge\cdots\wedge{d}w_{\nuup}\wedge\eta_j
=\int_{\partial{A}_j}u\, dz_1\wedge\cdots\wedge{dz}_{\nuup}\wedge
\piup^*\eta_j\\
&=\int_{A_j}du\wedge dz_1\wedge\cdots\wedge{dz}_{\nuup}\wedge
\piup^*\eta_j,\qquad\text{yielding}
\end{align*}
\begin{equation}\label{eq:578}
  \int_{A_i+j{(\nuup+1)}}\dfrac{\partial{u}}{\partial{z}_i}\,
z_1^{-1}\psi_i(z_{\nuup})\,d\bar{z}_{\nuup}\wedge{d}z_1\wedge
\cdots\wedge{dz}_{\nuup}\wedge\piup^*\eta_{i+j{(\nuup+1)}}
=0,
\end{equation}
where, to compute $\dfrac{\partial{u}}{\partial{z}_i}$, we consider any
$\mathcal{C}^1$-extension of $u$ as a function of 
the complex variables $z_1,\hdots,z_{\nu}$ for which $\bar\partial{u}=0$
at all points of $V$. But
\begin{equation*}
  c_{i+j{(\nuup+1)}}^{-1}\int_{A_i+j{(\nuup+1)}}\dfrac{\partial{u}}{\partial{z}_i}\,
z_1^{-1}\psi_i(z_{\nuup})\,d\bar{z}_{\nuup}\wedge{d}z_1\wedge
\cdots\wedge{dz}_{\nuup}\wedge\piup^*\eta_{i+j{(\nuup+1)}}
\end{equation*}
converges to $\dfrac{\partial{u}(p_0)}{\partial{z}_i}$ when $j\to+\infty$. 
Therefore $\dfrac{\partial{u}(p_0)}{\partial{z}_i}=0$ for $i=1,\hdots,
\nuup$.
Together with \eqref{eq:d1}, this shows that $du(p_0)=0$. \par
We have proved:
\begin{thm} Let $M$, $N$, be $CR$ manifolds of types $(n,k)$ and $(n,k-1)$,
respectively, with $k\geq{1}$. Assume that $N$ is minimal and that
there is a $CR$ map
$\piup:M\to{N}$ having a Lorentzian $CR$-non characteristic singularity
at $p_0\in{M}$. Then
we can find a new $CR$ structure of type $(n,k)$ on 
an open neighborhood $U$ of $p_0$ in $M$, which agrees with
the original one to infinite order at $p_0$, 
such that,
if $\Zt$ is the distribution of $(0,1)$-vector fields for this
new structure, 
 all solutions
$u\in\mathcal{C}^1$ on a neighborhood of $p_0$ to the homogeneous
system \eqref{eq:d1} satisfy $du(p_0)=0$.
\end{thm}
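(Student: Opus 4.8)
The whole machinery needed for this statement has been set up in \S\ref{sec:5xa} and in the present section; the plan is to reduce the general case to the locally $CR$-embeddable situation treated there, and then to assemble the pieces.

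First I would reduce to the case in which both $M$ and $N$ are locally $CR$-embeddable, at $p_0$ and at $q_0=\piup(p_0)$ respectively. This is where Lemma~\ref{lem25} enters: it permits replacing the $CR$ structures of $M$ and $N$ by locally $CR$-embeddable ones that agree to infinite order with the originals at $p_0$ and $q_0$. All the data we use are jet-theoretic at these two points --- the rank of $d\piup(p_0)$, the line $\ker d\piup^{*}(p_0)$, the signature $(1,n-1)$ of $\mathbf{L}_{\xiup_0}$, and the minimality of $N$ (absence of a germ of $CR$ submanifold of the same $CR$ dimension and smaller $CR$ codimension) --- so they are preserved. The only point requiring care is to carry out the two deformations \emph{compatibly}, so that a suitable restriction of $\piup$ remains a $CR$ map with the same type of singularity at $p_0$; a convenient device is to deform only $N$, say to $\widetilde{N}$, and then to endow $M$ near $p_0$ with the $CR$ structure whose degree-one ideal is generated by $dz_1$ together with the $\piup$-pullbacks of the generators of $\Jt^{1}_{\widetilde{N}}$, checking that this is involutive of type $(n,k)$. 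After this, shrinking if necessary, we may assume that conclusions (1) and (2) of Lemma~\ref{lem516} hold with $U'=U$, which is exactly the standing assumption of the construction above.

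Then I would run that construction verbatim: fix $CR$-charts $(U;z_1,\dots,z_{\nuup})$ of $M$ and $(W;w_2,\dots,w_{\nuup})$ of $N$ with $z_j=\piup^{*}w_j$, choose the sequence $\tauup_j\to 0$, the discs $\Delta_j$, the balls $B_j$, the compact sets $K_j$, the sets $A_j=\piup^{-1}(K_j)$ and the forms $\etaup_j$ as there, and apply Lemma~\ref{lemr32a} with smooth functions $\psi_i$ supported on $\bigcup_{j}\bar{\Delta}_{i+j(\nuup+1)}$ and normalized so that each period $c_{i+j(\nuup+1)}$ is real and positive. Since the coefficients $z_1^{-1}\psi_i(z_{\nuup})$ vanish to infinite order on $\{z_1=0\}\ni p_0$, the resulting $CR$ structure of type $(n,k)$ on a neighborhood $U'$ of $p_0$ agrees to infinite order with the original at $p_0$. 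Finally, for any $u\in\mathcal{C}^{1}$ near $p_0$ solving \eqref{eq:d1} for the new structure, the two structures coincide off the set $E=\{w_{\nuup}\in\bigcup_i\supp\psi_i\}$, which does not disconnect $\piup(V)$; hence Lemma~\ref{lemr54} --- through the weak unique continuation principle of Lemma~\ref{lem:3.1}, where the minimality and embeddability of $N$ are used --- gives $\oint_{\piup^{-1}(q)}u\,dz_1=0$ on the complement of $E$. Integrating over $\partial\Delta_j$ and applying Stokes' theorem over $A_j$ produces \eqref{eq:578}; dividing by $c_{i+j(\nuup+1)}$ and letting $j\to\infty$ extracts $\partial u(p_0)/\partial z_i=0$ for $i=1,\dots,\nuup$, and together with \eqref{eq:d1} this forces $du(p_0)=0$.

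I expect the only real obstacle to be the compatibility in the first step --- guaranteeing that after the infinite-order modifications $\piup$ is still a $CR$ map with a Lorentzian $CR$-noncharacteristic singularity at $p_0$. Everything afterwards is a routine repetition of Lemmas~\ref{lemr54} and~\ref{lemr32a} and the accompanying estimates, which were designed for exactly this purpose; the secondary verifications --- the existence of the $\psi_i$ realizing prescribed positive periods $c_{i+j(\nuup+1)}$ and the convergence of the normalized integrals --- are the same as in \S\ref{sec4} and present no difficulty once the geometry of Lemma~\ref{lem516} is in place.
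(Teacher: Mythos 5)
Your proposal coincides with the paper's own proof: the paper also treats first the case where $M$ and $N$ are locally $CR$-embeddable at $p_0$ and $q_0$ (the construction via Lemmas~\ref{lemr32a} and~\ref{lemr54}, the discs $\Delta_j$, the sets $A_j$ and the normalized periods $c_{i+j(\nu+1)}$), and then reduces the general case exactly by your compatibility device --- formal power series $CR$ solutions $\{w\}_2,\hdots,\{w\}_{\nu}$ on $N$ at $q_0$ are pulled back through $\pi$ to $M$, one further formal solution $\{z_1\}$ is adjoined, and smooth functions with these Taylor series give the new charts, so that the same map $\pi$ keeps its Lorentzian $CR$-noncharacteristic singularity for the new structures. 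The one overstatement in your write-up is the claim that minimality of $N$ is ``jet-theoretic'' and hence automatically preserved: minimality is not determined by the infinite-order jet of the structure at a single point, although the paper's own reduction is equally silent about the minimality of the deformed structure on $N$.
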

\begin{proof}
The discussion above proves the theorem in the case where both
$M$ and $N$ are locally $CR$-embeddable at $p_0$ and at $q_0=\piup(p_0)$,
respectively. In general, we can reduce to this case by taking
formal power series solutions $\{z\}_1,\hdots,\{z\}_{\nuup}$ at $p_0$,
$\{w\}_2,\hdots,\{w\}_{\nuup}$ at $q_0$, to the homogeneous tangential
Cauchy-Riemann systems on $M$ and $N$, respectively, with
$\{z_j\}=\piup^*\{w\}_j$ for $j=2,\hdots,\nuup$. Then we take
smooth functions $w_2,\hdots,w_{\nuup}$ on $N$ having Taylor series
$\{w\}_2,\hdots,\{w\}_{\nuup}$ at $q_0$, define $z_j=\piup^*w_j$ for
$j=2,\hdots,\nuup$, and choose a smooth function $z_1$ on $M$ 
with Taylor series $\{z_1\}$ at $p_0$. By restricting to 
a suitable neighborhood $U$ of $p_0$ in $M$ and $W$ of $q_0$ in $N$,
we obtain $CR$-charts $(U;z_1,\hdots,z_{\nuup})$ and
$(W;w_2,\hdots,w_{\nuup})$ for new $CR$ structures which agree to
infinite order with the original ones at $p_0$ in $M$ and at
$q_0$ in $N$. The same map $\piup$ has a Lorentizian $CR$-noncharacteristic
singularity at $p_0$ also for the new locally $CR$-embeddable 
$CR$ structures, so that
the previous discussion applies. 
\end{proof}
\subsection{Example} Let $n\geq{1}$, $k\geq{1}$,  
$\nuup=n+k$, and
$N$ any $CR$ manifold of type $(n,k-1)$, contained in an
open neighborhood $G$ of $0$ in $\mathbb{C}^{\nuup-1}$, and minimal.
Let $w_1,\hdots,w_{\nuup-1}$ be the canonical holomorphic coordinates
of $\mathbb{C}^{\nuup-1}$ and assume that $dw_1$ and $d\bar{w}_1$
are linearly independent on $N$.\par
Let $\phiup:\mathbb{C}^{\nuup}\ni (z_1,\hdots,z_{\nuup})\to
(z_1,\hdots,z_{\nuup-1})\in\mathbb{C}^{\nuup-1}$ be the projection onto
the first $\nuup-1$ coordinates. If
\begin{equation*}
  M=\big\{z\in\mathbb{C}^{\nuup}\mid \phiup(z)\in{N},\;
\im{z}_1+{\sum}_{i=1}^{\nuup-1}z_i\bar{z}_i=z_\nuup\bar{z}_\nuup\big\},
\end{equation*}
then $M$ is a minimal $CR$ submanifold of type $(n,k)$ of
$\piup^{-1}(G)\subset\mathbb{C}^{\nuup}$, and 
the restriction of $\phiup$ describes a $CR$ map
$\piup:M\to{N}$ which has at $0$ a Lorentzian $CR$-noncharacteristic 
singularity. \par
In particular, there are $CR$ structures on a minimal Lorentzian
$CR$ manifol $M$ of arbitrary $CR$ codimension such that
a point $p_0\in{M}$ is critical for all $CR$ functions defined
on a neighborhood of $p_0$. 
\section{Tangential Cauchy-Riemann complexes 
and a global example}
\label{sec:4}
Throughout this section,
$M$ is a smooth $CR$ manifold, of positive $CR$ dimension $n$,
and arbitrary $CR$ codimension $k\geq{0}$.  
We set $\nuup=n+k$,
and denote by ${\Zt}$ the distribution of smooth complex vector fields
of type $(0,1)$ on $M$. \par
We recall that $\Ot_M(U)$ is the space of $CR$ functions
of class $\mathcal{C}^1$ on $U^{\text{open}}\subset{M}$. We set
$\Ot_M^{\infty}(U)=\Ot_M(U)\cap\mathcal{C}^{\infty}(U)$, and
$\Ot_M^{\infty}(\bar{U})=\Ot_M(U)\cap\mathcal{C}^{\infty}(\bar{U})$
for the restrictions to $\bar{U}$ of smooth functions on $M$, which are
$CR$ in $U$.
Likewise, when $\Omega$ is an open subset of a complex manifold $\mathbb{X}$,
we write ${\Ot}(\bar{\Omega})$ for
$\mathcal{C}^{\infty}(\bar{\Omega})\cap{\Ot}(\Omega)$.
\subsection{Definition of the $\bar\partial_M$-complexes}
Let $\Jtm$ be the ideal sheaf, corresponding to the characteristic
distribution $\Zt$ of $(0,1)$-vector fields on $M$.
Formal integrability of $\Zt$ is equivalent 
(see Lemma\,\ref{lem52}) to
\begin{equation}
  \label{eq:212}
  d\Jtm\subset\Jtm.
\end{equation}
This implies that also $d(\Jtm)^a\subset(\Jtm)^{a}$, where,
for each positive integer $a$,  $(\Jtm)^{a}$ is the
$a$-th exterior power of the ideal $\Jtm$, and we set
$(\Jtm)^0=\varOmega^*_M$. 
We can define cochain
complexes on $M$ by considering the quotients
$\mathscr{Q}^{a,*}_M=(\Jtm)^{a}/(\Jtm)^{a+1}$ and the map
$\bar{\partial}_M:\mathscr{Q}^{a,*}_M\to\mathscr{Q}^{a,*}_M$ induced 
on the quotients by
the exterior differential. We have \begin{equation*}
\mathscr{Q}^{a,*}_M={\bigoplus}_{q\geq{0}}\mathscr{Q}^{a,q}_M,\quad\text{
with 
$\mathscr{Q}^{a,q}_M=((\Jtm)^a\cap\varOmega_M^{a+q})/((\Jtm)^{a+1}
\cap\varOmega_M^{a+q})$},
\end{equation*}
and $\bar{\partial}_M(\mathscr{Q}^{a,q}_M)\subset\mathscr{Q}^{a,q+1}_M$.
This indeed was the intrinsic definition for the tangential Cauchy-Riemann
complexes on $CR$ manifolds given in \cite{N84}.\par
Let $\mathcal{Z}^{a,q}(U)=\{f\in\mathscr{Q}^{a,q}(U)\mid \bar{\partial}_Mf=0\}$ 
and $\mathcal{B}^{a,q}(U)=\bar{\partial}_M(\mathscr{Q}^{a,q-1}_M(U))$.
The quotient $\mathrm{H}^{a,q}(U)=\mathcal{Z}^{a,q}(U)/\mathcal{B}^{a,q}(U)$
is the 
\emph{cohomology group} of the smooth cohomology of $\bar{\partial}_M$ on $U$
in bidegree $(a,q)$. We set
\begin{equation*}
  \mathrm{H}^{a,q}(p_0)={\varinjlim}_{\; p_0\in{U}^{\text{open}}}\mathrm{H}^{a,q}(U)
\end{equation*}
for the group of germs of bidegree $(a,q)$-cohomology classes at $p_0$. 
\par\smallskip
Let us give a more explicit description of the equations involved in the
$\bar{\partial}_M$-complexes. 
\par
An element of $\mathcal{Z}^{a,q}(U)$ has
a representative $f\in\varOmega^{p+q}_M(U)$. The
conditions that $f\in(\Jtm)^a$ and that its class $[f]\in\mathcal{Z}^{a,q}(U)$ 
satisfies the 
integrability condition $\bar{\partial}_M[f]=0$
are expressed by
\begin{equation}\label{eq:213}
\begin{cases}
f\wedge\etaup_1\wedge\cdots\wedge\etaup_{\nuup-a+1}=0,\;
 \forall \etaup_1,\hdots,\etaup_{\nuup-a+1}\in\Jt_M^1(U),
 \\
 df\wedge\etaup_1\wedge\cdots\wedge\etaup_{\nuup-a+1}=0,\;
 \forall \etaup_1,\hdots,\etaup_{\nuup-a+1}\in\Jt_M^1(U),
 \end{cases}
\end{equation}
and the equation $\bar{\partial}_M\alpha=[f]$ for $\alpha\in\mathscr{Q}^{a,q-1}(U)$
is equivalent to finding $u\in\varOmega^{a+q-1}_M(U)$ such that 
\begin{equation}\label{eq:214}
 \begin{cases}
u\wedge\etaup_1\wedge\cdots\wedge\etaup_{\nuup-a+1}=0,\;
 \forall \etaup_1,\hdots,\etaup_{\nuup-a+1}\in\Jt_M^1(U),\\
 (du-f)\wedge\etaup_1\wedge\cdots\wedge\etaup_{\nuup-a+1}=0,\;
 \forall \etaup_1,\hdots,\etaup_{\nuup-a+1}\in\Jt_M^1(U).
 \end{cases}
\end{equation}
Both equation \eqref{eq:213} and \eqref{eq:214} are meaningful when 
$f,u$ are currents, and therefore we can consider the 
$\bar{\partial}_M$-complexes on currents, or require different degrees of
regularity on the data and the solution.
\subsection{Absence of Poincar\'e lemma}\label{sec:62}
In general, the $\bar\partial_M$-complexes are not acyclic (see e.g.
\cite{AFN81,AH72,HN06, N84}). In fact, the perturbations we used in the
previous section to deduce non complete-integrability results utilize 
elements of $\mathrm{H}^{0,1}(p_0)$. 
The arguments of \S\ref{sec4} provide simpler proofs of
the absence of the Poincar\'e lemma in some special cases.
We have e.g. (see \S\ref{sec4})
\begin{prop}\label{prop61}
Let $M$ be a $CR$ manifold of type $(n,1)$, which is locally $CR$-embeddable 
and Lorentzian at $p_0$, and let $(U;z_1,\hdots,z_{\nuup})$,
with $\nuup=n+1$,  be a $CR$ chart
centered at $p_0$ for which \eqref{eq:41a} holds and
$2z_1\bar{z}_1\geq\im{z}_{\nuup}$  on $U$. \par
 Let $\psi$ be a smooth function of one complex variable $\tauup$,
with compact support contained in 
$\{\im\tauup\geq{0}\}$. Then 
$\omegaup=z_1^{-1}\psi(z_{\nuup})d\bar{z}_{\nuup}$, continued by $0$ where
$z_1=0$, defines a smooth $\bar{\partial}_M$-closed $1$-form. A
necessary and sufficient condition for $\omegaup$ to be cohomologous
to $0$ is that
\begin{equation}\label{mom}
  \iint \tauup^h\psi(\tau)d\tau\wedge{d}\bar\tau=0,\;\forall h\in\mathbb{Z},\;
h\geq{0}.
\end{equation}
\end{prop}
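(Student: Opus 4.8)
\emph{Plan.} The plan is to prove the two implications separately, working throughout in the geometric picture of \S\ref{sec4}: the fibration $\piup\colon U\to\bar\omega\subset\mathbb{C}^n$ whose generic fibre $M_w=\piup^{-1}(w)$ is a circle winding once around $\{z_1=0\}$ and which degenerates to a point over $\partial\omega\cap B$, together with the vanishing of $\oint_{M_w}u\,dz_1$ for $CR$ functions $u$ (Lemma\,\ref{lem41}). Here ``$\omegaup$ cohomologous to $0$'' means that some smooth $u$ on a neighbourhood $V$ of $p_0$ satisfies $du-\omegaup\in\Jt^1_M$, i.e.\ $du=\omegaup+\sum_{i=1}^{\nuup}c_i\,dz_i$ with smooth $c_i$; and we may assume $\supp\psi\subset z_{\nuup}(U)$, as $\omegaup$ only feels $\psi$ there. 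I shall use repeatedly that, $\supp\psi$ lying in $\{\im\tauup\ge0\}$, $\psi$ vanishes to infinite order on $\mathbb{R}$, so that by $2z_1\bar{z}_1\ge\im z_{\nuup}$ each of $z_1^{-1}\psi(z_{\nuup})$, $z_1^{-1}\hat\psi(z_{\nuup})$, $z_1^{-1}(\partial_{\tauup}\hat\psi)(z_{\nuup})$ extends smoothly across $\{z_1=0\}$ on $U$, vanishing there to infinite order; here $\hat\psi$ is the Cauchy transform of \S\ref{sec:3x}, with $\partial_{\bar\tauup}\hat\psi=\psi$.

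\emph{Sufficiency.} Assume \eqref{mom}. Expanding $\hat\psi(\tauup)=(2\pi i)^{-1}\iint\psi(\zetaup)(\zetaup-\tauup)^{-1}\,d\zetaup\wedge d\bar{\zetaup}$ in powers of $1/\tauup$ for large $|\tauup|$ shows that the left--hand sides of \eqref{mom} are, up to a common constant, the Laurent coefficients of $\hat\psi$ at $\infty$; hence $\hat\psi\equiv0$ for large $|\tauup|$, and, $\hat\psi$ being holomorphic on the connected open set obtained from $\{\im\tauup<0\}$ by adjoining $\{|\tauup|\text{ large}\}$, also $\hat\psi\equiv0$ on $\{\im\tauup<0\}$. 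Thus $\hat\psi$ is a $\mathcal{C}^\infty$ function with compact support contained in $\{\im\tauup\ge0\}$, the function $v:=z_1^{-1}\hat\psi(z_{\nuup})$ (set $=0$ on $\{z_1=0\}$) is smooth on $U$, and
\[
 dv=-z_1^{-2}\hat\psi(z_{\nuup})\,dz_1+z_1^{-1}(\partial_{\tauup}\hat\psi)(z_{\nuup})\,dz_{\nuup}+z_1^{-1}\psi(z_{\nuup})\,d\bar{z}_{\nuup};
\]
hence $dv-\omegaup\in\langle dz_1,dz_{\nuup}\rangle\subset\Jt^1_M$, i.e.\ $\bar\partial_M v=[\omegaup]$, so $\omegaup$ is cohomologous to $0$.

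\emph{Necessity.} Suppose $du=\omegaup+\sum_i c_i\,dz_i$ on $V$. Where $z_{\nuup}\notin\supp\psi$ we have $\omegaup=0$, so $u$ is $CR$ there, and since $\partial\omega\cap B$ consists of points with $\im w_n\le0$, hence meets $\supp\psi$ in at most $p_0$, the argument of Lemma\,\ref{lem41} gives $F(w):=\oint_{M_w}u\,dz_1=0$ for all $w\in\piup(V)$ with $w_n\notin\supp\psi$. Fix $h\ge0$ and pick a bounded region $R\subset\mathbb{C}$ with $\supp\psi$ relatively compact in $R$ and $R$ so small that $A:=\piup^{-1}(\{(0,\dots,0,\tauup):\tauup\in R\})$ is a compact solid torus contained in $V$ (with at most the one pinched fibre $M_0=\{p_0\}$). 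On $A$ we have $z_2=\dots=z_{\nuup-1}=0$, so $(\sum_i c_i\,dz_i)\wedge dz_1\wedge dz_{\nuup}=0$ and
\[
 d(z_{\nuup}^{\,h}u\,dz_1\wedge dz_{\nuup})=z_{\nuup}^{\,h}\,du\wedge dz_1\wedge dz_{\nuup}=z_{\nuup}^{\,h}\,\omegaup\wedge dz_1\wedge dz_{\nuup}=z_{\nuup}^{\,h}z_1^{-1}\psi(z_{\nuup})\,d\bar{z}_{\nuup}\wedge dz_1\wedge dz_{\nuup}.
\]
Integrating over the circle fibres, where $\oint z_1^{-1}\,dz_1=2\pi i$, the right--hand side integrates over $A$ to $\pm2\pi i\iint\tauup^{h}\psi(\tauup)\,d\tauup\wedge d\bar{\tauup}$, while by Stokes it equals $\int_{\partial A}z_{\nuup}^{\,h}u\,dz_1\wedge dz_{\nuup}=\oint_{\partial R}\tauup^{h}\,F(0,\dots,0,\tauup)\,d\tauup=0$, since $F$ vanishes on $\partial R$ (where $w_n\notin\supp\psi$). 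Hence \eqref{mom} holds for all $h\ge0$.

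\emph{Main difficulty.} No single step is deep; the work is in the geometric bookkeeping near $p_0$ --- identifying which part of the fibration lies over $\omega$ (so that $F$ vanishes off $\supp\psi$ and $A\subset V$) and checking the smoothness of $z_1^{-1}\hat\psi(z_{\nuup})$ across $\{z_1=0\}$ --- which is controlled, exactly as in \S\ref{sec4}, by \eqref{eq:41a} and $2z_1\bar{z}_1\ge\im z_{\nuup}$; the degenerate fibre $M_0=\{p_0\}$ is harmless, contributing only a point to the contour integral, and $\psi$ is flat at $\tauup=0$.
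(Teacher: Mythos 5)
Your sufficiency half is correct, and it actually supplies a construction the paper leaves implicit: from \eqref{mom} the Laurent expansion of $\hat\psi$ at infinity vanishes, so $\hat\psi\equiv 0$ on the unbounded component of $\mathbb{C}\setminus\supp\psi$, hence on $\{\im\tauup<0\}$ and to infinite order on $\mathbb{R}$, and then $v=z_1^{-1}\hat\psi(z_{\nuup})$ is smooth on $U$ with $dv-\omegaup\in\langle dz_1,dz_{\nuup}\rangle\subset\Jt^1_M$. (One presentational slip: your preliminary blanket claim that $z_1^{-1}\hat\psi(z_{\nuup})$ and $z_1^{-1}(\partial_{\tauup}\hat\psi)(z_{\nuup})$ extend smoothly across $\{z_1=0\}$ is not true for an arbitrary $\psi$ --- $\hat\psi$ does not vanish on the lower half-plane in general --- but in the sufficiency argument you derive that vanishing from \eqref{mom} before using it, so no harm is done.) Your necessity route is also genuinely different from the paper's: you integrate $z_{\nuup}^{h}u\,dz_1\wedge dz_{\nuup}$ over a solid torus lying over the line $\{w'=0\}$ and read off the moments directly, whereas the paper pushes $u$ down to $g(w)=\tfrac{-1}{2\pi i}\oint_{M_w}u\,dz_1$, shows $\bar\partial g=\psi(w_n)d\bar w_n$ with $g=0$ on $\partial\omega\cap B$, restricts to $w'=0$ and quotes the classical criterion for compactly supported solvability of $\partial v/\partial\bar\tauup=\psi$; your Stokes computation in effect reproves that criterion in situ, which is a perfectly good trade.

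The genuine defect is in how you frame necessity. You declare that ``$\omegaup$ cohomologous to $0$'' means a primitive $u$ on \emph{some} neighbourhood $V$ of $p_0$, and you then ask for a region $R$ with $\supp\psi\Subset R$ such that $A=\piup^{-1}(\{(0,\hdots,0,\tauup)\mid\tauup\in R\})\subset V$. These two demands are incompatible once $V$ is small: $A$ contains the fixed compact set $\piup^{-1}(\{(0,\hdots,0,\tauup)\mid\tauup\in\supp\psi\})$, which does not shrink with $V$. Worse, under the germ-at-$p_0$ reading the necessity assertion itself is false: if $\supp\psi$ stays at positive distance from $\tauup=0$, then $\omegaup\equiv 0$ on a small neighbourhood of $p_0$, so it is trivially a coboundary there while its moments can be arbitrary. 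The proposition must be read as the paper's proof reads it, namely with a primitive $u\in\mathcal{C}^1(U)$ on the whole chart domain $U$ (together with the normalization, which you rightly make explicit, that $\supp\psi$ lies in the range of $z_{\nuup}$). With $V=U$ your torus does fit inside the domain of $u$, the vanishing $F(0,\tauup)=0$ for $\tauup\in\partial R$ follows from the argument of Lemma\,\ref{lem41} provided you take $R$ with connected complement, so that $\partial R$ lies in the component of $\{w\in\omega\mid w_n\notin\supp\psi\}$ reached by the boundary-uniqueness argument, and your Stokes computation then yields \eqref{mom}. So the necessity argument is sound once the meaning of ``cohomologous to $0$'' is corrected; as written, the choice of $R$ is a step that cannot be carried out.
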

\begin{proof}
Assume indeed that there is $u\in\mathcal{C}^1(U)$ such that 
$\bar\partial_Mu=[\omegaup]$. We keep the notation of \S\ref{sec4}
for $\piup,\;\omega,\; B$, and use integration on the fiber to define
\begin{equation*}
g(w)=\tfrac{-1}{2\pi i}\oint_{M_w}u\,dz_1,\quad\text{i.e.}\quad 
  \int_{\omega}g\phiup=\int_Uu\wedge 
dz_1\wedge\piup^*\phiup,\quad
\forall \phiup\in\varOmega^{2n}.
\end{equation*}
Clearly $g=0$ on $\partial\omega\cap{B}$ and furthermore we obtain
\begin{align*}
  \int_{\omega}dg\wedge\etaup=
\tfrac{-1}{2\pi{i}}\int_Udu\wedge{dz_1}\wedge\piup^*\etaup
\qquad\qquad\qquad\qquad\qquad
\\
=
\tfrac{-1}{2\pi{i}}\int z_1^{-1}\psi(z_{\nuup})d\bar{z}_{\nuup}\wedge{dz}_1\wedge
\piup^*\etaup
=\int_{\omega}\psi(w_n)d\bar{w}_n\wedge\etaup\qquad\\
\forall \etaup\in\varOmega^{n-1,n}_0(\omega).
\end{align*}
Thus $g$ satisfies
\begin{equation*}
  \begin{cases}
    \bar\partial{g}=\psi(w_n)d\bar{w}_n,&\text{on $\partial\omega$},\\
g=0, &\text{on $\partial\omega\cap{B}$}.
  \end{cases}
\end{equation*}
These equations imply that $\tauup\to g(0,\tauup)$ has compact support 
and hence that the equation $\partial{v}/\partial\bar{\tauup}=\psi(\tauup)$
has a solution with compact support. This is equivalent to the momentum
conditions \eqref{mom}
in the statement.
\end{proof}
Wy can repeat the same argument to show that the $\bar\partial_M$ complexes
are not acyclic in dimension $q$ when $M$ is 
strictly $q$-pseudoconvex at $p_0$.
Still restraining to type $(n,1)$ this condition means that, for
a suitable
$CR$ chart $(U;z_1,\hdots,z_{\nuup})$ centered at $p_0$ we have
\begin{equation}\label{eq:qx}
  \im{z}_{\nuup}+{\sum}_{i=q+1}^{\nuup}z_i\bar{z}_i={\sum}_{i=1}^qz_i\bar{z}_i+
O(|z|^3)\quad\text{on $U$}. 
\end{equation}
Here $\nuup=n+1$. By taking $U$ small we can assume that
$\im{z}_{\nuup}\leq 2{\sum}_{i=1}^qz_i\bar{z}_i$ on $U$. 
Set $\ell=\nuup-q$. By taking $U$ sufficiently small
we obtain a proper map
\begin{equation}\label{eq:pq}
  \piup:U\ni p\to (z_{q+1}(p),\hdots,z_{\nuup}(p))\in\omega\subset\mathbb{C}^{\ell},
\end{equation}
with $\omega$ the complement of a strictly convex open subset in an open
ball $B$ of $\mathbb{C}^{\ell}$, and $M_{w}\sim{S}^{2q-1}$ for $w\in\ring{\omega}$
and $M_{w}\sim{\text{a point}}$ for $w\in\partial\omega\cap{B}$. 
Let 
\begin{equation*}K_{q-1}(z_1,\hdots,z_q)
=\dfrac{{\sum}_{i=1}^q\bar{z}_i\,
\omegaup_i(z_1,\hdots,z_q)}{({\sum_{i=1}^qz_i\bar{z}_i})^q}\;\;\text{with}
\;\;
\omegaup_i(z_1,\hdots,z_q)=\dfrac{\partial}{\partial\bar{z}_i}\rfloor 
d\bar{z}_1\wedge\cdots{d\bar{z}_q}.
\end{equation*}
\begin{prop}
Let $M$ be a $CR$ manifold of type $(n,1)$, which is locally $CR$-embeddable 
and strictly $q$-pseudoconvex at $p_0$. Take
a $CR$-chart $(U,z_1,\hdots,z_{\nuup})$ with \eqref{eq:qx},
$\im{z}_{\nuup}\leq{2}z_1\bar{z}_1$ on $U$, and let
$\piup:M\to\omega$ be given by \eqref{eq:pq}.\par
If $\psi$ is a smooth function with compact support 
of one complex variable $\tauup$, with
$\supp\psi\subset\{\im\tauup\geq{0}\}$,  then 
$\omegaup=\psi(z_{\nuup})d\bar{z}_{\nuup}\wedge{K}_{q-1}(z_1,\hdots,z_q)$, 
continued by $0$ where
$z_1=0$, defines a smooth $\bar{\partial}_M$-closed $q$-form on $U$,
and \eqref{mom} is a 
necessary and sufficient condition for $\omegaup$ to be cohomologous
to $0$.
\end{prop}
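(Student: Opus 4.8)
The plan is to imitate the proof of Proposition\,\ref{prop61}, with two substitutions: replace the Cauchy kernel $z_1^{-1}$ by the Bochner--Martinelli type kernel $K_{q-1}(z_1,\hdots,z_q)$, and replace the circle fibers of $\piup$ by the $(2q-1)$-sphere fibers of the map \eqref{eq:pq}. Two classical facts about $K_{q-1}$ will be used throughout: it is $\bar\partial$-closed on $\mathbb{C}^q\setminus\{0\}$ (it is the $d\bar z$-component of the Bochner--Martinelli kernel of $\mathbb{C}^q$), and the constant $c_q=\oint_{S^{2q-1}}K_{q-1}\wedge dz_1\wedge\cdots\wedge dz_q$ is different from $0$ (its normalization).

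First I would verify that $\omegaup$ is a well-defined smooth $\bar\partial_M$-closed $q$-form. On $U$, \eqref{eq:qx} gives, after shrinking, $\im z_\nuup\le 2\sum_{i=1}^q z_i\bar z_i$; and since $\psi\in\mathcal{C}^\infty$ has compact support contained in $\{\im\tauup\ge 0\}$, it vanishes to infinite order on $\{\im\tauup=0\}$, so $\psi(z_\nuup)$ and all its derivatives are $O\big((\sum_{i=1}^q z_i\bar z_i)^N\big)$ for every $N$ near $\{z_1=\cdots=z_q=0\}$; this absorbs the order $1-2q$ singularity of $K_{q-1}$ there, so the continuation by $0$ produces a smooth form on $U$. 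Closedness then follows as in Lemma\,\ref{lem52} and Lemma\,\ref{lemr54}: since $z_1,\hdots,z_q$ are $CR$ functions, $\bar\partial_M K_{q-1}$ is the pull-back of $\bar\partial K_{q-1}=0$, while $\bar\partial_M(\psi(z_\nuup))=[(\partial\psi/\partial\bar\tauup)(z_\nuup)\,d\bar z_\nuup]$ and $d\bar z_\nuup\wedge d\bar z_\nuup=0$.

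For the necessity of \eqref{mom} I would assume $\bar\partial_M u=[\omegaup]$ for some $u\in\mathcal{C}^1$ on a connected neighborhood $V$ of $p_0$, let $z_{q+1},\hdots,z_\nuup$ be the base coordinates of $\piup$, and define $g$ on $\piup(V)\subset\omega$ by
\begin{equation*}
  \int_\omega g\,\phiup=\int_U u\wedge dz_1\wedge\cdots\wedge dz_q\wedge\piup^*\phiup,
  \qquad\phiup\in\varOmega^{2\ell}_0(\omega),
\end{equation*}
so that $g(w)$ is, up to sign, the integral of $u\wedge dz_1\wedge\cdots\wedge dz_q$ over the fiber $M_w$; as $M_w$ shrinks to a point on $\partial\omega\cap B$, $g$ extends there by $0$. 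Testing against compactly supported forms $\eta$ of maximal holomorphic bidegree on $\omega$ and running the computation in the proof of Proposition\,\ref{prop61}, the ambiguity $du-\omegaup\in\Jtm$ drops out because $\piup^*\eta$ already carries $dz_{q+1}\wedge\cdots\wedge dz_\nuup$, whence $(du-\omegaup)\wedge dz_1\wedge\cdots\wedge dz_q\wedge\piup^*\eta=0$; integrating the remaining term over the fibers, where $\oint_{M_w}K_{q-1}\wedge dz_1\wedge\cdots\wedge dz_q=c_q\neq 0$, gives $\bar\partial g=c_q\,\psi(w_\ell)\,d\bar w_\ell$ on $\omega$ in the weak sense. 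Hence $g$ is holomorphic in $w_1,\hdots,w_{\ell-1}$, and, exactly as in Proposition\,\ref{prop61}, its restriction to the $w_\ell$-line through $q_0$ is a compactly supported solution of $\partial v/\partial\bar\tauup=c_q\,\psi$, which forces \eqref{mom}.

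Conversely, if \eqref{mom} holds then $\hat\psi$ (the function introduced just before Lemma\,\ref{lem52}) has compact support; being holomorphic and vanishing at infinity on the connected set $\{\im\tauup<0\}$, it vanishes identically there, so $\chi:=\hat\psi$ has compact support contained in $\{\im\tauup\ge 0\}$ and $\partial\chi/\partial\bar\tauup=\psi$. By the vanishing estimate of the second paragraph, $u:=\chi(z_\nuup)\,K_{q-1}(z_1,\hdots,z_q)$, continued by $0$ on $\{z_1=\cdots=z_q=0\}$, is a smooth $(0,q-1)$-form on $U$, and $\bar\partial_M u=\bar\partial_M(\chi(z_\nuup))\wedge K_{q-1}=[\psi(z_\nuup)\,d\bar z_\nuup\wedge K_{q-1}]=[\omegaup]$ since $\bar\partial_M K_{q-1}=0$; thus $\omegaup$ is cohomologous to $0$. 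I expect the real work, beyond transcribing the argument of Proposition\,\ref{prop61}, to lie in the necessity direction: making the fiber integration over the degenerating family of $(2q-1)$-spheres rigorous with only $\mathcal{C}^1$ data, checking that only the $\omegaup$-term survives the pairing against forms of maximal holomorphic bidegree, and pinning down the non-zero Bochner--Martinelli constant $c_q$ together with the boundary vanishing $g|_{\partial\omega\cap B}=0$.
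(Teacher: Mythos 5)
Your proposal is correct and follows essentially the same route as the paper: the paper's own proof just declares the argument analogous to Proposition~\ref{prop61}, with Cauchy's formula replaced by the Koppelman identity \eqref{eq:kop} --- the $d$-closedness of $K_{q-1}\wedge dz_1\wedge\cdots\wedge dz_q$ and the non-zero constant value of its integral over the fibers $\piup^{-1}(w)\simeq S^{2q-1}$ --- which is exactly the mechanism you invoke (you only use $c_q\neq 0$, while the paper records the explicit constant from Koppelman). The details you add (the infinite-order vanishing of $\psi(z_{\nuup})$ absorbing the singularity of $K_{q-1}$, the ambiguity $du-\omegaup\in\Jtm$ dying against $\piup^{*}\etaup$, and the explicit primitive $\hat\psi(z_{\nuup})\,K_{q-1}$ for sufficiency) are consistent with what the paper leaves implicit.
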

\begin{proof}
The proof is analogous to that of Proposition\,\ref{prop61}.
Here we need to utilize, instead of Cauchy's formula, 
the identity
\begin{align}\label{eq:kop}
  \int_{\piup^{-1}(w)}K_{q-1}(z_1,\hdots,z_q)
\wedge{d}z_1\wedge\cdots\wedge{d}z_q=\tfrac{(-1)^{q(q-1)/2}(q-1)!}{(2\pi i)^q}
,\qquad \qquad\\ \notag
\forall w\in
\ring{\omega}\cap\supp\psi(w_{\ell}).
\end{align}
We have indeed
$d(K_{q-1}(z_1,\hdots,z_q)
\wedge{d}z_1\wedge\cdots\wedge{d}z_q)=0$ and then the value of the 
left hand side of \eqref{eq:kop} 
is a homology invariant. For all $w\in\ring{\omega}$, the fiber
$\piup^{-1}(w)$ is equivalent to the sphere 
$S^{2q-1}$. Then the value in  \eqref{eq:kop} can be computed by
integrating on $S^{2q-1}$ (see \cite{K67}).
\end{proof}
\subsection{Strictly pseudoconvex subdomains  of $CR$ manifolds}
Let $\Omega$ be an open set in $M$. 
Saying that 
$\partial\Omega$ is smooth at a point $p_0\in\partial\Omega$ means that
there is an open neighborhood $U$ of $p_0$ in $M$ and a smooth
real valued function $\phi\in\mathcal{C}^{\infty}(U,\mathbb{R})$
such that
\begin{equation}
  \label{eq:3.1}
  \Omega\cap{U}=\{p\in{U}\mid \phi(p)<0\},\quad d\phi(p_0)\neq{0}.
\end{equation}
\begin{defn}
We say that $\Omega$ is \emph{strictly pseudoconvex} at $p_0$ if
$\mathfrak{L}_{d\phi(p_0)}>0$ on $\mathbf{Z}_{p_0}M\cap\ker{d}\phi(p_0)$.
\end{defn}
Note that $\mathbf{Z}_{p_0}M\cap\ker{d}\phi(p_0)=\mathbf{Z}_{p_0}M$
when $d\phi(p_0)\in{H}^0_{p_0}M$.\par
We need to introduce a weaker notion of $CR$ function on $M$. 
\begin{defn}
If 
$U^{\text{open}}\subset{M}$, we say that $u\in\mathcal{C}^0(U)$ 
is $CR$ if
\begin{equation*}
  \int_U u\,d\eta=0,\quad\forall 
\eta\in(\Jtm)^{\nuup}(U)\cap\varOmega^{2n+k-1}_{M,0}(U),
\end{equation*}
where we indicate by $\varOmega^*_{M,0}(U)$ smooth exterior forms having 
compact support in~$U$.
\end{defn}
If $M$ is a $CR$ submanifold
of a complex manifold $\mathbb{X}$ and is minimal at $p_0$, then
all germs of continuous 
$CR$ functions extend holomorphically to wedges in $\mathbb{X}$ whose 
edges contain neighborhoods of $p_0$
(see \cite{Tu88, Tu90}). We set $\Ot_M^{\text{cont}}$ for the sheaf of
germs of continuous $CR$ functions on $M$. For every $U^{\text{open}}\subset{M}$,
the space
$\Ot_M^{\text{cont}}(U)$ is Fr\'echet for the topology of uniform
convergence on the compact subsets of~$U$.
\begin{defn}
Let $\Omega$ be an open subset of $M$,
$f\in{\Ot}_M(\Omega)$, and $p_0\in\partial\Omega$.
We say that $f$ 
weakly $CR$-extends beyond $p_0$ if there exists a connected
open neighborhood $U$ of $p_0$ in $M$ and 
$g\in{\Ot}_M^{\text{cont}}(U)$ such that $\{p\in\Omega\cap{U}
\mid g(p)=f(p)\}$ has a non empty interior.  
\end{defn}

\begin{prop}\label{prop:3.2}
  Let $M$ be a $CR$ submanifold of 
$CR$-dimension $n\geq{1}$ and arbitrary $CR$ codimension $k\geq{0}$ of
a Stein manifold $\mathbb{X}$, 
and $\Omega$ a smooth strictly pseudoconvex domain in $\mathbb{X}$,
with $\partial\Omega\cap{M}\neq\emptyset$.
Let $M_0$ be the set of points of $M\cap\partial\Omega$
at which $M$ is minimal.
Consider on ${\Ot}(\bar{\Omega})$
the natural Fr\'echet topology of uniform convergence with all derivatives
on the compact subsets of $\bar\Omega$. 
\par
Then the set
$\mathbb{E}$ of the elements 
$f\in{\Ot}(\bar{\Omega})$ such that 
$f|_{M\cap\bar{\Omega}}$ weakly $CR$-extends beyond some point $p\in{M}_0$ 
is of the first Baire category in 
${\Ot}(\bar{\Omega})$.
\end{prop}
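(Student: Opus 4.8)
The plan is to run the Baire-category scheme of the proof of Theorem~\ref{thm:b1}, with ``$f$ weakly $CR$-extends beyond a point of $M_0$'' playing the role that ``$Lu=0$ has a solution with nonzero differential'' played there. First I would fix a countable family $\{(W_\nu,V_\nu,p_\nu)\}_{\nu\in\mathbb{N}}$ in which $p_\nu\in M_0$, $W_\nu$ is a connected open neighborhood of $p_\nu$ in $M$, and $V_\nu$ is a nonempty open subset of $M$ with $\overline{V}_\nu\Subset W_\nu\cap\Omega$, chosen dense enough that every instance of a weak $CR$-extension of some $f$ beyond some point of $M_0$, say via a pair $(U,g)$, is caught by one of these triples ($p_\nu\in W_\nu\subset U$ and $\overline{V}_\nu$ inside the open set where $g=f$). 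For $\nu,h\in\mathbb{N}$ let $\mathbb{E}(\nu,h)$ be the closure in ${\Ot}(\bar\Omega)$ of the set of $f\in{\Ot}(\bar\Omega)$ for which there is $g\in\Ot_M^{\mathrm{cont}}(W_\nu)$ with $\sup_{W_\nu}|g|\le h$ and $g|_{V_\nu}=f|_{V_\nu}$. Then $\mathbb{E}\subset\bigcup_{\nu,h}\mathbb{E}(\nu,h)$, so it suffices to show each $\mathbb{E}(\nu,h)$ is closed with empty interior; closedness holds by construction.

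Next I would reduce the empty-interior claim to a statement about honest holomorphic extension. Because $M$ is minimal and locally $CR$-embeddable at $p_\nu$, a uniformly bounded sequence of continuous $CR$ functions on $W_\nu$ extends, by the wedge-extension theorem of Tumanov (see \cite{Tu88,Tu90,BR90}), to holomorphic functions on a fixed wedge with edge a neighborhood $W_\nu'\ni p_\nu$; by Montel's theorem a subsequence converges there, and its boundary value is again a continuous $CR$ function on $W_\nu'$. Combined with the closure in the definition of $\mathbb{E}(\nu,h)$, this shows that every $f\in\mathbb{E}(\nu,h)$ lies in
\[
\mathbb{F}_{p}\ =\ \bigl\{\,f\in{\Ot}(\bar\Omega)\ \bigm|\ f|_{M\cap\Omega}\ \text{weakly $CR$-extends beyond }p\,\bigr\}
\]
with $p=p_\nu$, so it is enough to prove that $\mathbb{F}_p$ has empty interior for each $p\in M_0$. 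The crucial step is the implication \emph{if $p_0\in M_0$ and $f|_{M\cap\Omega}$ weakly $CR$-extends beyond $p_0$, then $f$ extends holomorphically to a neighborhood of $p_0$ in $\mathbb{X}$}: given a witness $g\in\Ot_M^{\mathrm{cont}}(W)$ of the extension, with $g=f$ on a nonempty open $V_0\subset M\cap\Omega$, Lemma~\ref{lem:3.1} forces $g=f|_{M\cap\Omega}$ on $W\cap\Omega\cap M$ after shrinking $W$ so that this set is connected; then $g$ extends to $\tilde g$ holomorphic on a wedge $\mathcal{W}$ with edge a neighborhood of $p_0$ in $M$, and strict pseudoconvexity of $\Omega$ at $p_0$ is exactly what makes $\mathcal{W}\cup\Omega$ a full neighborhood of $p_0$ in $\mathbb{X}$, so that an edge-of-the-wedge argument glues $f$ and $\tilde g$ to a holomorphic function near $p_0$. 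I expect this last point — matching the direction cone of Tumanov's wedge against the geometry of the strictly pseudoconvex boundary $\partial\Omega$ — to be the main obstacle, and the place where the hypotheses on $M_0$ and on $\Omega$ are genuinely used.

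Granting that implication, $\mathbb{F}_{p_0}$ has empty interior by a peak-function perturbation. Since $\Omega$ is strictly pseudoconvex at $p_0$ and $\mathbb{X}$ is Stein, there is a peak function $\chi\in{\Ot}(\bar\Omega)$ at $p_0$, with $\chi(p_0)=1$ and $|\chi(p)|<1$ for all $p\in\bar\Omega\setminus\{p_0\}$, built from a local holomorphic support function of $\partial\Omega$ at $p_0$ together with a globalizing $\bar\partial$-solution; by the maximum principle $\chi$ does not extend holomorphically to any neighborhood of $p_0$ in $\mathbb{X}$. Given any $f_0\in{\Ot}(\bar\Omega)$, the functions $f_\delta=f_0+\delta\chi$ lie in ${\Ot}(\bar\Omega)$ and tend to $f_0$ as $\delta\to0$, since $\delta\chi$ is a scalar multiple of the fixed element $\chi$. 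If $f_0$ and $f_\delta$ both belonged to $\mathbb{F}_{p_0}$ for some $\delta\neq0$, the implication would force both to extend holomorphically past $p_0$, hence so would $\delta\chi=f_\delta-f_0$ and therefore $\chi$ — impossible. Thus if $f_0\in\mathbb{F}_{p_0}$ then $f_\delta\notin\mathbb{F}_{p_0}$ for every $\delta\neq0$, so $f_0$ is not an interior point of $\mathbb{F}_{p_0}$; and trivially $f_0$ is not interior when $f_0\notin\mathbb{F}_{p_0}$. Hence every $\mathbb{E}(\nu,h)\subset\mathbb{F}_{p_\nu}$ has empty interior, $\mathbb{E}$ is a countable union of closed nowhere dense sets, and so $\mathbb{E}$ is of the first Baire category, as claimed.
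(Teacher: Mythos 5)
Your scheme stands or falls on the pointwise implication you yourself flag as ``the main obstacle'': that a weak $CR$-extension of $f|_{M\cap\Omega}$ beyond $p_0$ forces $f$ to extend holomorphically to a full neighborhood of $p_0$ in $\mathbb{X}$. This is not proved, and there is no reason it should hold: the Tumanov wedge attached to $M$ has its edge on $M$ and its direction cone dictated by the Levi geometry of $M$, not by $\partial\Omega$, so nothing guarantees that $\mathcal{W}\cup\Omega$ covers a neighborhood of $p_0$ (the wedge may well point into $\Omega$); moreover $M$ is not assumed generic in $\mathbb{X}$, so the wedge need not even be an open subset of $\mathbb{X}$, and gluing $\tilde g$ to $f$ across the overlap requires a boundary-uniqueness argument you have not supplied. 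A second, independent flaw is the peak-function step: peaking at $p_0$ does not obstruct holomorphic extension across $p_0$ (on the disc, $(1+z)/2$ peaks at $1$ and is entire; near a strictly pseudoconvex boundary point, $e^{\phi}$ with $\phi$ a local holomorphic support function is a local peak function holomorphic in a full neighborhood), so ``by the maximum principle $\chi$ does not extend'' is false as stated; you would need an element of $\Ot(\bar\Omega)$ genuinely non-extendable past $p_0$, which is a separate (nontrivial) construction. There is also a smaller gap in passing from the closure $\mathbb{E}(\nu,h)$ to $\mathbb{F}_{p_\nu}$: a uniform bound on the $g_j$ plus Montel in the wedge does not by itself give a continuous $CR$ boundary value for the limit without equicontinuity up to the edge.

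The paper's proof of Proposition~\ref{prop:3.2} avoids any such pointwise extension claim. It fixes $p_0\in M_0$, forms for each neighborhood $V_\nu$ the Fr\'echet space $\mathbb{F}_\nu$ of pairs $(f,g)$ with $g\in\Ot_M^{\text{cont}}(M\cap V_\nu)$ agreeing with $f$ on $M\cap\Omega\cap V_\nu$, and argues by contradiction: if the extendable $f$'s were of second category, the Banach--Schauder open mapping theorem (together with uniqueness of $g$ from minimality and Lemma~\ref{lem:3.1}) would yield a uniform estimate $|g(p)|\leq\|f\|_{\ell,K}$ with $K\subset\bar\Omega$ compact; applying this to powers $f^h$ of entire functions and taking $h$-th roots gives $|f(p)|\leq\sup_K|f|$ for all $f\in\Ot(\mathbb{X})$ and $p\in M\cap V$, contradicting the holomorphic convexity of $\bar\Omega$ in the Stein manifold $\mathbb{X}$. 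That functional-analytic mechanism is precisely what lets one conclude without ever showing that any individual $f$ extends, and it is the ingredient your proposal is missing.
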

\begin{proof} 
Fix a point $p_0\in{M}_0$ and a 
countable fundamental system of open neighborhoods $\{V_\nu\}_{\nu\in\mathbb{N}}$
of $p_0$ in $\mathbb{X}$. 
We can assume that for every $\nu\in\mathbb{N}$ 
the intersection $M\cap{V}_{\nu}$ is connected and contained in the set
of minimal points of $M$. In particular, two $CR$ functions on $M\cap{V}_{\nu}$
which agree on some non empty open subset of $M\cap{V}_{\nu}$, are equal on
all $M\cap{V}_{\nu}$.
For each $\nu$ the set 
\begin{equation*}
 \mathbb{F}_{\nu}=\{(f,g)\in{\Ot}(\bar{\Omega})
\times{\Ot}_M^{\text{cont}}(M\cap
 V_{\nu})\mid g=f\;\text{on}\; M\cap\Omega\cap{V}_{\nu}\}
\end{equation*}
is a Fr\'echet space, being a closed subspace of 
${\Ot}(\bar{\Omega})\times
{\Ot}_M^{\text{cont}}(M\cap V_{\nu})$.\par
Let $\pi_{\nu}:\mathbb{F}_{\nu}\to{\Ot}(\bar\Omega)$ be the
projection into the first component.\par
Assume by contradiction that the set of $f\in{\Ot}(\bar\Omega)$ 
such that $f|_{M\cap\Omega}$ can
be weakly $CR$-continued beyond $p_0$ 
is of the second Baire category.  Then 
$ \bigcup_{\nu\in\mathbb{N}}\pi_{\nu}(\mathbb{F}_{\nu})$
is of the second Baire category, and hence 
at least one $\pi_{\nu_0}(\mathbb{F}_{\nu_0})$ is of the second
Baire category in ${\Ot}(\bar\Omega)$.
Hence
$\pi_{\nu_0}(\mathbb{F}_{\nu_0})={\Ot}(\bar\Omega)$ and,
by Banach-Shauder's theorem, the map 
$\pi_{\nu_0}:\mathbf{F}_{\nu_0}\to{\Ot}(\bar\Omega)$ is open
(see e.g \cite[\S{2.1}]{SW99}). 
By the assumption of minimality and 
the fact that
Lemma \ref{lem:3.1} also applies to continuous $CR$ functions, for each
$f\in{\Ot}(\bar\Omega)$ there is at most one 
$g\in{\Ot}_M^{\text{cont}}(M\cap{V_{\nu}})$ such that
$(f,g)\in\mathbb{F}_{\nu}$. \par
Thus, we conclude that 
there is a relatively compact open neighborhood $V$ of
$p_0$ in $\mathbb{X}$ such that  
\begin{gather}
 \forall f\in{\Ot}(\bar\Omega)\;\;\exists ! g\in{\Ot}_M(
 M\cap{V}) \quad\text{with $f=g$ on $M\cap\Omega\cap{V}$ and}\\
 \label{eq:3.3}
 |g(p)|\leq \|f\|_{\ell,K}, \quad \forall p\in{M}\cap{V},
\end{gather}
where $K$ is a compact subset of $\bar\Omega$ and $\|f\|_{\ell,K}$ a 
seminorm involving the derivatives of $f$ up to order $\ell$ on $K$.
Let $K_{\nu}$ be a sequence of compact subsets of $\mathbb{X}$
such that $K_{\nu+1}\Subset\text{int}({K}_{\nu})$, 
for all $\nu\in\mathbb{N}$, and
$\bigcap_{\nu}K_{\nu}=K$. By Cauchy's inequalities, there are constants
$C_{\nu}>0$ such that
\begin{equation*}
 \|f\|_{\ell,K}\leq C_{\nu}{\sup}_{K_{\nu}}|f|,\quad\forall f\in{\Ot}
 (\mathbb{X}).
\end{equation*}
Using \eqref{eq:3.3} we obtain 
\begin{equation*}
 |f(p)|\leq C_{\nu}{\sup}_{K_{\nu}}|f|,\quad \forall f\in{\Ot}(\mathbb{X}),\;\;
 \forall p\in{M}\cap{V}.
\end{equation*}
By applying this inequality to the positive integral powers of 
the entire functions on $\mathbb{X}$, we obtain that
\begin{gather*}
 |f(p)|\leq 
 C_{\nu}^{{1/h}}\;
 {\sup}_{K_{\nu}}|f|,\quad \forall f\in{\Ot}(\mathbb{X}),\;\;\forall
 p\in{M}\cap{V},\;\;
 \forall 0<h\in\mathbb{N}\\
  \Longrightarrow 
  |f(p)|\leq 
  {\sup}_{K_{\nu}}|f|,\quad \forall f\in{\Ot}(\mathbb{X}),\;\;
  \forall p\in{M}\cap{V}.
\end{gather*}
Since ${\bigcap}_{\nu}K_{\nu}=K$, we obtain 
\begin{equation*}
 |f(p)|\leq{\sup}_{K}|f|,\quad\forall f\in{\Ot}(\mathbb{X}).
\end{equation*}
But this gives a contradiction, because the fact that
$\Omega$ is strictly pseudoconvex implies that $\bar{\Omega}$
is holomorphically convex in $\mathbb{X}$. \par
We showed that,
for every point $p\in{M}_0$,  
the set $\mathbb{E}_{p}$ of
$f\in{\Ot}(\bar\Omega)$ 
for which $f|_{M\cap\Omega}$ weakly $CR$-extends beyond $p$
is of the first Baire category
in ${\Ot}(\bar\Omega)$. The set $M_0$ is separable. Then
we take a dense sequence $\{p_{\nu}\}$ in $M_0$ and we observe
that $\mathbb{E}=\bigcup_{\nu\in\mathbb{N}}\mathbb{E}_{p_{\nu}}$
is of the first Baire category,
being a countable union of sets of the first Baire category.
\end{proof}

\subsection{The Cauchy problem for $\bar{\partial}_M$}
Let $F^{\text{closed}},U^{\text{open}}\subset{M}$. Set, for all $a=0,\hdots,\nuup$,
$q=0,\hdots,n$, 
\begin{displaymath}
\mathscr{Q}^{a,q}(U;F)=\{f\in\mathscr{Q}^{a,q}(U)\mid f=0^{\infty}\;\text{on}\;
F\cap{U}\}. 
\end{displaymath}
Clearly $(\mathscr{Q}^{a,*}(U;F),\bar{\partial}_M)$ is a subcomplex
of $(\mathscr{Q}^{a,*}(U),\bar{\partial}_M)$
and we can consider the cohomology groups
\begin{equation*}
  \mathrm{H}^{a,q}(U,F)=\ker(\bar{\partial}_M:\mathscr{Q}^{a,q}(U;F)\to
\mathscr{Q}^{a,q+1}(U;F))/\bar\partial_M\mathscr{Q}^{a,q-1}(U;F)
\end{equation*}
and also their germs
\begin{equation*}
   \mathrm{H}^{a,q}(p_0,F)={\varinjlim}_{\; p_0\in{U}^{\text{open}}}\mathrm{H}^{a,q}(U,F).
\end{equation*}

We obtain from Proposition \ref{prop:3.2}
\begin{prop}
  \label{prop:3.3}
Let $M$ be a $CR$ submanifold of positive $CR$ dimension $n$ of a
Stein manifold $\mathbb{X}$. 
Let $M'$ be the set of points of $M$ where $M$ is minimal.
Let $\Omega$ be a strictly pseudoconvex 
open subset of $\mathbb{X}$, with 
$M_0=M'\cap\partial\Omega\neq\emptyset$.
Then the set of $\alpha\in
\mathrm{H}^{0,1}(M,\bar{\Omega}\cap{M})$ 
that restrict to the zero class of
$\mathrm{H}^{0,1}(p,\bar{\Omega}\cap{M})$ for some
$p\in{M}_0$
is infinite dimensional.\par
More precisely, if we consider the Fr{\'e}chet space
\begin{equation*}
  \mathscr{F}=\{f\in\varOmega^1({M})\mid f|_{\bar{\Omega}\cap{M}}=0^{\infty},
\;\; df\in\Jtm\},
\end{equation*}
then the set of $f\in\mathscr{F}$ for which we can find $p\in{M}_0$ and
$u_{(p)}\in{\mathcal{C}}^0_{(p)}$ with $u_{(p)}$ vanishing on
$\bar{\Omega}\cap{M}$, and 
$\bar{\partial}_Mu_{(p)}=f_{(p)}$, span a linear subspace of infinite
dimensional codimension in 
$\mathscr{F}$. Here we wrote $f_{(p)}$ for the germ of $f$ at $p$, and 
the equality $\bar{\partial}_Mu_{(p)}=f_{(p)}$
has to be interpreted in the weak sense: it means that there is 
an open neighborhood $U_p$ of $p$ and a continuous function 
$u\in\mathcal{C}(U_p)$, vanishing on $\bar{\Omega}\cap{U}_p$, such that
\begin{equation*}
  \int_M u\, d\etaup = -\int_M f\wedge \etaup,\quad\forall
\etaup\in (\Jtm)^{\nuup}(U_p)\cap\varOmega^{2n+k-1}_{0}(U_p). 
\end{equation*}
\end{prop}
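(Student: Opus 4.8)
The plan is to set up a dictionary between the relative $\bar\partial_M$--Cauchy problem on $M$ with infinite--order vanishing on $\bar\Omega\cap M$ and the weak $CR$--extension problem for functions in $\Ot(\bar\Omega)$, and then to feed this into Proposition \ref{prop:3.2} (and into the mechanism of its proof). First I would fix once and for all a continuous linear extension operator $E\colon\Ot(\bar\Omega)\to\mathcal C^{\infty}(M)$ producing, for $h\in\Ot(\bar\Omega)$, a smooth $\tilde h=E(h)$ on $M$ whose Taylor expansion at every point of $\bar\Omega\cap M$ agrees with that of $h$. Since $h$ is $CR$ on $\Omega\cap M$ and smooth up to the boundary, its jet along $\bar\Omega\cap M$ is formally annihilated by $\bar\partial_M$, so $h^{\flat}:=\bar\partial_M\tilde h$ is a $\bar\partial_M$--closed $1$--form vanishing to infinite order on $\bar\Omega\cap M$, i.e. $h^{\flat}\in\mathscr{F}$, and $h\mapsto h^{\flat}$ is continuous and linear.

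\textbf{The dictionary.} The key observation is: for $p\in M_0$, the Cauchy problem $\bar\partial_Mu_{(p)}=h^{\flat}_{(p)}$ with $u_{(p)}$ vanishing on $\bar\Omega\cap M$ is solvable near $p$ \emph{if and only if} $h$ weakly $CR$--extends beyond $p$. Indeed, a solution $u$ on a connected neighbourhood $U\ni p$ gives the $CR$ function $g:=\tilde h-u$ on $U$, which equals $h$ on the nonempty open set $\Omega\cap U$, hence (by minimality and weak unique continuation, Lemma \ref{lem:3.1}) on $\bar\Omega\cap U$; conversely a $CR$ extension $g$ of $h|_{\bar\Omega\cap U}$ yields the solution $u:=\tilde h-g$, which vanishes on $\bar\Omega\cap U$. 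Composing with Proposition \ref{prop:3.2} gives at once that the set of $h$ whose $h^{\flat}$ is solvable at some point of $M_0$ is meagre in $\Ot(\bar\Omega)$; more precisely, the proof of Proposition \ref{prop:3.2} shows that for each $p\in M_0$ and each member $U_{p,j}$ of a neighbourhood basis of $p$ the Fréchet space of pairs (Cauchy datum, continuous solution on $U_{p,j}$ vanishing on $\bar\Omega\cap M$) projects onto the datum with meagre image --- otherwise, by Banach--Schauder and weak unique continuation, an open solution operator tested on the powers of entire functions of $\mathbb X$, together with the Cauchy inequalities, would force $|h(q)|\le\sup_K|h|$ for $q$ in a neighbourhood of $p$ sticking out of $\bar\Omega$, contradicting that $\bar\Omega$ is holomorphically convex. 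Hence each $V_p:=\{f\in\mathscr{F}\mid f \text{ is solvable at }p\}$ is a countable union of meagre subspaces, and so is meagre.

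\textbf{Infinite dimensionality of the solvable classes.} For the first assertion I would exhibit, for one fixed $p\in M_0$, an infinite linearly independent family $h_1,h_2,\dots\in\Ot(\bar\Omega)$ that extend $CR$ to a common neighbourhood of $p$ in $M$ but not globally (e.g. restrictions of holomorphic functions defined on neighbourhoods of $\bar\Omega\cup U$ but not on all of $\mathbb X$); then $h_j^{\flat}$ is solvable at $p$, while its class in $\mathrm H^{0,1}(M,\bar\Omega\cap M)$ is nonzero --- and these classes are independent because two distinct such $h$'s cannot differ by a function extending $CR$ to all of $M$.

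\textbf{Infinite codimension of the span, and the main obstacle.} For the refined statement I would first reduce to a countable index set: a solution of $\bar\partial_Mu=f$ on a neighbourhood $U$ of $p$ with $u$ vanishing on $\bar\Omega\cap M$ is \emph{a fortiori} a solution near every $p'\in M_0\cap U$, so solvability propagates to nearby points of $M_0$; choosing a countable dense $\{p_\nu\}\subset M_0$, the solvable data are $\bigcup_\nu V_{p_\nu}$ and their span is $\sum_\nu V_{p_\nu}$. To bound its codimension I would produce, for each $\nu$, an infinite independent family of continuous linear functionals vanishing on $V_{p_\nu}$ of moment type \eqref{mom} --- obtained by integrating $f$ over the fibres of a proper local fibration attached to the strictly pseudoconvex hypersurface $\partial\Omega$ near $p_\nu$, in the spirit of Proposition \ref{prop61} --- and then, shrinking neighbourhoods coherently along $\{p_\nu\}$, arrange that infinitely many such functionals survive simultaneously for all $\nu$, exhibiting an infinite--dimensional space of functionals annihilating $\sum_\nu V_{p_\nu}$. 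The main difficulty is precisely this coordination: passing from the meagreness of each summand $V_{p_\nu}$ to genuine infinite codimension of the sum, since meagreness of the pieces does not by itself bound the codimension of $\sum_\nu V_{p_\nu}$; this is where the uniformity of the strictly pseudoconvex local model along $M_0$ and a Baire--category argument applied simultaneously to the countably many pairs--spaces must be used carefully, in the spirit of the proof of Theorem \ref{thm:b1}.
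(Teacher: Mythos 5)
Your central reduction is exactly the paper's own proof. Given $g\in\Ot(\bar\Omega)$, the paper takes a Whitney extension $\tilde g\in\mathcal{C}^\infty(\mathbb{X})$ with $\tilde g=g$ on $\bar\Omega$, sets $f=\bar\partial\tilde g|_M\in\mathscr{F}$ and $w=\tilde g|_M$, and observes that a germ $u_{(p)}$ vanishing on $\bar\Omega\cap M$ with $\bar{\partial}_Mu_{(p)}=f_{(p)}$ turns the correction of $w$ by $u$ into a weak $CR$ extension of $g|_{M\cap\Omega}$ beyond $p$; it then invokes Proposition \ref{prop:3.2}. Only the forward direction of your ``dictionary'' (solvability implies extension) is used, and no continuous linear extension operator is needed, since the argument is run one $g$ at a time; your converse direction and Lemma \ref{lem:3.1} play no role in the paper's proof.

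Where you go beyond this reduction, the added material is either not needed or not a proof. First, the claimed meagreness of each $V_p$ \emph{inside} $\mathscr{F}$ cannot be obtained by ``testing on powers of entire functions'': elements of $\mathscr{F}$ are $1$-forms and cannot be raised to powers, so the multiplicative trick of Proposition \ref{prop:3.2} only makes sense after pulling back through $g\mapsto\bar\partial\tilde g|_M$, i.e. it yields meagreness in $\Ot(\bar\Omega)$ --- which is precisely what the paper uses and all it uses. Second, in your argument for the first assertion, the step ``$h_j$ does not extend holomorphically to all of $\mathbb{X}$, hence the class of $h_j^{\flat}$ in $\mathrm{H}^{0,1}(M,\bar\Omega\cap M)$ is non-zero'' is unjustified: triviality of the relative class is equivalent to a global $CR$ extension problem on $M$, not to holomorphic extendability to $\mathbb{X}$, and the asserted linear independence of these classes is likewise not argued. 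Third, the difficulty you flag at the end --- that meagreness of each $V_{p_\nu}$ does not by itself bound the codimension of $\sum_\nu V_{p_\nu}$ --- is a genuine issue, but your proposed remedy (moment-type functionals in the spirit of Proposition \ref{prop61} attached to $\partial\Omega$ near each $p_\nu$ and coordinated along a dense sequence) is only a plan, not an argument, and it is not what the paper does: the paper contains no such construction, its proof consisting solely of the reduction above together with the appeal to Proposition \ref{prop:3.2}, the passage to the quantitative ``infinite codimension'' statement being left implicit there. So your proposal coincides with the paper's route exactly where it is complete, and at the point where you depart from it you leave an acknowledged gap rather than closing one.
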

\begin{proof}
  Let $g\in{\Ot}(\bar\Omega)$. By Whitney's extension theorem,
there is a smooth complex valued function 
$\tilde{g}$ on $\mathbb{X}$ with
$\tilde{g}=g$ on $\bar{\Omega}$. Then $f=\bar\partial\tilde{g}|_M$ 
is an element of $\mathscr{F}$. Let $w=\tilde{g}|_{M}$.
Then $w$ is a smooth function on $M$ and its restriction to $M\cap\Omega$
is $CR$.
If $p\in{M}_0$ and 
$u_{(p)}\in{{}}{\mathcal{C}}^0_{(p)}$ solves $\bar{\partial}_Mu_{(p)}=
f_{(p)}$,  then $w_{(p)}+u_{(p)}$ yields a 
weak $CR$ extension
of $w|_{M\cap\Omega}$ beyond $p$. Therefore 
the thesis follows by Proposition\,\ref{prop:3.2}.
\end{proof}
Let $N$ be a smooth submanifold of $M$. 
Its conormal bundle in $M$ at $p\in{N}$ is
\begin{equation*}
  T^*_{N,p}M=\{\xi\in{T}^*_pM\mid \xi(v)=0,\;\forall v\in{T}_pN\}.
\end{equation*}
\begin{dfn} We say that $N$ is \emph{characteristic} at $p\in{N}$ if
$T^*_{N,p}M\cap{H}^0_pM\neq\{0\}$.
\end{dfn}
Equivalently, $N$ is non characteristic at $p\in{N}$ if it contains a germ
$(N',p)$ of $CR$ submanifold of type $(0,\nuup)$ of $M$.
We have the following
\begin{lem} \label{lem:3.4}
Let $M$ be a $CR$ manifold of positive $CR$ dimension $n$, and
$N$ a smooth submanifold of $M$. Let $U$ be an open neighborhood
of a non characteristic point $p_0$ of ${N}$.
\par
If $f\in\mathscr{Q}^{0,1}(U,N)$ and $u\in\mathcal{C}^{\infty}(U)$
solves
\begin{equation}
  \label{eq:3.7}\begin{cases}
  \bar{\partial}_Mu=f &\text{on}\;\;U,\\
  u=0 &\text{on}\;\; N
\end{cases}
\end{equation}
then $u$ vanishes to infinite order at $p_0$.
\end{lem}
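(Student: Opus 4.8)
The plan is to reduce the statement to a purely formal downward induction on the derivatives of $u$ in directions transverse to $N$, after a short geometric preparation. First I would invoke the characterization recalled just before the Lemma: since $p_0$ is non characteristic for $N$, there is a germ $(N',p_0)$ of a $CR$ submanifold of $M$ of type $(0,\nuup)$ with $(N',p_0)\subset(N,p_0)$, so $\dim_{\mathbb{R}}N'=\nuup=n+k$. Because $N'$ has $CR$ dimension $0$ we have $\mathbb{C}T_{p_0}N'\cap\mathbf{Z}_{p_0}M=\{0\}$, and since $\dim_{\mathbb{C}}\mathbf{Z}_{p_0}M=n$ while $(n+k)+n=2n+k=\dim_{\mathbb{C}}\mathbb{C}T_{p_0}M$, this sum is direct and equals $\mathbb{C}T_{p_0}M$. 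The hypotheses $f\in\mathscr{Q}^{0,1}(U,N)$ and $u|_{N}=0$ restrict to $N'$, so I may replace $N$ by $N'$; after shrinking $U$ I may also assume that $\Zt$ carries a smooth frame $L_1,\dots,L_n$ on $U$ and that $U$ has real coordinates $(s,t)=(s_1,\dots,s_{\nuup},t_1,\dots,t_n)$ centered at $p_0$ with $N=\{t=0\}$. Since $\partial_{s_1},\dots,\partial_{s_{\nuup}}$ span $\mathbb{C}T_{p_0}N'$ at $p_0$, the fields $\partial_{s_1},\dots,\partial_{s_{\nuup}},L_1,\dots,L_n$ form a basis of $\mathbb{C}T_{p_0}M$, hence, after one further shrinking of $U$, a frame of $\mathbb{C}TM$ over all of $U$.

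Unwinding the definition of $\bar{\partial}_M$ on functions, the equation $\bar{\partial}_Mu=f$ with $f\in\mathscr{Q}^{0,1}(U,N)$ amounts to saying that the functions $f_j:=L_ju\in\mathcal{C}^{\infty}(U)$ vanish to infinite order on $N\cap U$, i.e. $\partial_s^{\beta}\partial_t^{\gamma}f_j(s,0)=0$ for all $j$ and all multi-indices $\beta,\gamma$. Expanding each coordinate field $\partial_{t_l}$ in the frame $\{\partial_{s_m},L_j\}$ and letting it act on $u$ gives, for suitable $a_{lj},b_{lm}\in\mathcal{C}^{\infty}(U)$,
\begin{equation}\tag{$\dagger$}\label{eq:dagger}
  \partial_{t_l}u=\sum_{j=1}^{n}a_{lj}\,f_j+\sum_{m=1}^{\nuup}b_{lm}\,\partial_{s_m}u,\qquad l=1,\dots,n.
\end{equation}

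The core is then the claim that $\partial_t^{\gamma}u(s,0)=0$ for every $t$-multi-index $\gamma$ and all $s$ near $0$, proved by induction on $|\gamma|$. For $|\gamma|=0$ this is $u|_{N}=0$. For the inductive step, write $\gamma=\gamma'+e_l$ with $|\gamma'|=|\gamma|-1$, apply $\partial_t^{\gamma'}$ to \eqref{eq:dagger} and evaluate at $t=0$. By Leibniz, in the first sum every factor $\partial_t^{\gamma'-\delta}f_j$ vanishes on $\{t=0\}$ because $f_j=0^{\infty}$ there, while in the second sum every factor $\partial_t^{\gamma'-\delta}\partial_{s_m}u=\partial_{s_m}\!\big(\partial_t^{\gamma'-\delta}u\big)$ vanishes on $\{t=0\}$ because $|\gamma'-\delta|\le|\gamma'|<|\gamma|$ and $\partial_t^{\gamma'-\delta}u(s,0)=0$ by the inductive hypothesis. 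Hence $\partial_t^{\gamma}u(s,0)=\partial_t^{\gamma'}\partial_{t_l}u(s,0)=0$. Finally, for any multi-index $\alpha=(\beta,\gamma)$ in the $(s,t)$-coordinates, $\partial^{\alpha}u(p_0)=\partial_s^{\beta}\big(\partial_t^{\gamma}u(\cdot,0)\big)(0)=0$, so $u$ vanishes to infinite order at $p_0$.

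The one genuinely delicate step is the geometric preparation: producing the germ $N'$ of type $(0,\nuup)$ from the non characteristic hypothesis, verifying the splitting $\mathbb{C}T_{p_0}M=\mathbb{C}T_{p_0}N'\oplus\mathbf{Z}_{p_0}M$, and thereby obtaining the frame $\{\partial_{s_m},L_j\}$ of $\mathbb{C}TM$ near $p_0$ which makes \eqref{eq:dagger} available; once this is in hand the induction is entirely formal, and in particular the argument requires no local $CR$-embeddability of $M$.
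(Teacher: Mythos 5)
Your proof is correct and follows the same route as the paper: the paper's proof consists of the single remark that the claim ``follows by the uniqueness in the formal non characteristic Cauchy problem,'' and your argument---passing to the totally real germ $N'$ of type $(0,\nuup)$ guaranteed by the non characteristic hypothesis, splitting $\mathbb{C}T_{p_0}M=\mathbb{C}T_{p_0}N'\oplus\mathbf{Z}_{p_0}M$ to get the frame $\{\partial_{s_m},L_j\}$, and running the Leibniz induction on the transverse derivatives---is exactly the standard verification of that formal uniqueness. So you have simply supplied the details the paper leaves to the cited principle; no gap.
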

\begin{proof}
The statement follows by the uniqueness in the formal
non characteristic Cauchy problem. 
\end{proof}
\subsection{The canonical bundle} 
The sheaf $\mathscr{Q}^{\nuup,0}=
(\Jtm)^\nuup\cap{\varOmega}^{\nuup}_M$
is the sheaf of germs of smooth sections of a line bundle $KM$ on $M$,
that is called the \emph{canonical bundle} of $M$.
\par
Let $U^{\text{open}}\subset{M}$, and assume that
$\thetaup_1,\hdots,\thetaup_{\nuup}\in\Jtm(U)$ give a basis of
$\mathbf{Z}^0_pM$ at all points $p\in{U}$. Since
$d\thetaup_j\in\Jtm(U)$, we obtain
\begin{equation*}
  d(\thetaup_1\wedge\cdots\wedge\thetaup_{\nuup})=\alphaup\wedge
\thetaup_1\wedge\cdots\wedge\thetaup_{\nuup},\;\;\text{with}\;\;
\alphaup\in\varOmega^1(U).
\end{equation*}
The form $\alphaup$ is uniquely determined modulo $\Jtm$, and thus
defines a unique element $[\alphaup]\in\mathscr{Q}^{0,1}(U)$. 
By differentiating we get
\begin{align*}
  0&=d^2(\thetaup_1\wedge\cdots\wedge\thetaup_{\nuup})=
(d\alpha)\wedge \thetaup_1\wedge\cdots\wedge\thetaup_{\nuup}-
\alpha\wedge{d}(\thetaup_1\wedge\cdots\wedge\thetaup_{\nuup})\\
&=(d\alpha)\wedge \thetaup_1\wedge\cdots\wedge\thetaup_{\nuup}
-\alpha\wedge\alphaup\wedge
\thetaup_1\wedge\cdots\wedge\thetaup_{\nuup}\\
&=(d\alpha)\wedge \thetaup_1\wedge\cdots\wedge\thetaup_{\nuup}
\;\Longleftrightarrow \bar{\partial}_M[\alpha]=0.
\end{align*}
If $\omegaup$ is another non zero 
section of $KM$, defined in a neighborhood
of a point $p_0\in{U}$, we have 
$\omegaup=e^u\thetaup_1\wedge\cdots\wedge\thetaup_{\nuup}$
on a neighborhood $U_{p_0}$ of $p_0$ in $U$ and hence
\begin{equation*}
  d\omegaup = e^u(du+\alphaup)\wedge\thetaup_1\wedge\cdots\wedge\thetaup_{\nuup}
\;\;\text{on $U_{p_0}$.}
\end{equation*}
Thus we obtain
\begin{lem}
There is a section $\psiup\in\Gamma(M,{{}}{H}^{0,1})$ of the sheaf
of germs of cohomology classes of bidegree $(0,1)$ such that
\begin{equation}
  \label{eq:35} \begin{cases} 
\forall p\in{M},\;\;\forall \omegaup\in{{}}{\Gamma}_{(p)}(KM),\;\;
\text{with $\omegaup(p)\neq{0}$},\;\;
\exists \alpha\in{{}}{\varOmega}^1_{(p)}
\;\;\text{such that}\\
d\omegaup(p)=\alpha(p)\wedge\omegaup(p).\;\;
 \bar\partial_M[\alpha]=0,\;\; \llbracket\alpha
\rrbracket\in\psiup(p).
\end{cases}
\end{equation}
Here $\llbracket\alpha
\rrbracket$ is the element of $\mathrm{H}^{0,1}(p)$ defined by
$[\alpha]\in{{}}{\mathcal{Z}}^{0,1}_{(p)}$.
\end{lem}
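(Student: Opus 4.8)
The plan is to notice that the computation carried out immediately before the statement already produces, on each small enough open set, a $\bar\partial_M$‑closed element of $\mathscr{Q}^{0,1}$ whose class in $\mathrm{H}^{0,1}$ does not depend on any choice, and then to glue these classes into a global section of the sheaf $\mathrm{H}^{0,1}$. First I would cover $M$ by open sets $U$ over which the line bundle $KM$ is trivial (possible since $KM$ is a line bundle, hence locally trivial), and on each such $U$ choose a frame $\thetaup_1,\dots,\thetaup_{\nuup}\in\Jtm(U)$ of $\mathbf{Z}^0M$, so that $\omegaup_U:=\thetaup_1\wedge\cdots\wedge\thetaup_{\nuup}$ is a nowhere vanishing section of $KM$ over $U$; note that any nowhere vanishing section of $KM$ over $U$ has this form after rescaling one of the $\thetaup_j$. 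Writing $d\omegaup_U=\alpha_U\wedge\omegaup_U$ with $\alpha_U\in\varOmega^1(U)$ pins down $\alpha_U$ uniquely modulo $\Jt_M^1(U)$, hence a well‑defined $[\alpha_U]\in\mathscr{Q}^{0,1}(U)$, and the $d^2=0$ computation displayed above shows $\bar\partial_M[\alpha_U]=0$. Thus $U$ carries a class $\llbracket\alpha_U\rrbracket\in\mathrm{H}^{0,1}(U)$.

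Next I would check compatibility on overlaps. If $U,V$ are two sets of the cover and $p\in U\cap V$, then near $p$ we have $\omegaup_V=e^{h}\omegaup_U$ for a smooth germ $h$ (a nowhere vanishing smooth factor has a local logarithm), and the identity $d\omegaup=e^{h}(dh+\alpha)\wedge\thetaup_1\wedge\cdots\wedge\thetaup_{\nuup}$ from the discussion above gives $[\alpha_V]=[\alpha_U]+[dh]=[\alpha_U]+\bar\partial_M[h]$ as germs at $p$ in $\mathscr{Q}^{0,1}$, since $\bar\partial_M$ on $\mathscr{Q}^{0,0}$ (the sheaf of smooth functions, as $(\Jtm)^1$ has no degree‑$0$ part) is induced by $d$. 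Hence $\llbracket\alpha_U\rrbracket$ and $\llbracket\alpha_V\rrbracket$ have the same germ at every point of $U\cap V$, so the rule $p\mapsto\psiup(p):=\text{(germ at }p\text{ of }\llbracket\alpha_U\rrbracket\text{, for any }U\ni p)$ is a well‑defined $\psiup\in\Gamma(M,\mathrm{H}^{0,1})$.

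Finally I would read off \eqref{eq:35}: given $p\in M$ and a germ $\omegaup\in\Gamma_{(p)}(KM)$ with $\omegaup(p)\neq 0$, pick a chart $U\ni p$ of the cover; after shrinking, $\omegaup=e^{h}\omegaup_U$ for a smooth germ $h$, so $d\omegaup=\alpha\wedge\omegaup$ with $\alpha:=dh+\alpha_U\in\varOmega^1_{(p)}$, whence $\bar\partial_M[\alpha]=\bar\partial_M[\alpha_U]=0$ and $\llbracket\alpha\rrbracket=\llbracket\alpha_U\rrbracket=\psiup(p)$ because $[dh]$ is a $\bar\partial_M$‑coboundary; evaluating $d\omegaup=\alpha\wedge\omegaup$ at $p$ gives the stated pointwise identity. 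The proof is in effect already contained in the preceding text, so there is no genuine obstacle; the only point deserving a little care — and it is routine — is the sheaf bookkeeping in the overlap step, namely that since the presheaf $U\mapsto\mathrm{H}^{0,1}(U)$ need not be a sheaf, a section of $\mathrm{H}^{0,1}$ must be understood as a compatible family of germs, which is precisely what the overlap computation produces.
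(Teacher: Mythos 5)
Your proposal is correct and follows essentially the same route as the paper, whose proof of this Lemma is precisely the computation displayed just before the statement: write $d(\thetaup_1\wedge\cdots\wedge\thetaup_{\nuup})=\alphaup\wedge\thetaup_1\wedge\cdots\wedge\thetaup_{\nuup}$, use $d^2=0$ to get $\bar\partial_M[\alphaup]=0$, and use $\omegaup=e^{u}\thetaup_1\wedge\cdots\wedge\thetaup_{\nuup}$ to see the class changes only by the coboundary $\bar\partial_M[u]$. The only thing you add is the explicit overlap/gluing bookkeeping interpreting a section of $\mathrm{H}^{0,1}$ as a compatible family of germs, which the paper leaves implicit.
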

\begin{lem}
If $M$ is locally $CR$-embeddable at $p$, then $\psiup(p)=0$. \par  
If $\Zt$ is completely integrable at $p$, then, for 
$\alpha\in{{}}{\varOmega}^1_{(p)}$ satisfying \eqref{eq:35}
there is $u\in{{}}{\mathcal{C}}^0_{(p)}$ such that
$\bar\partial_Mu=[\alpha]$. 
\end{lem}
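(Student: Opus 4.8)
The strategy is to read off both assertions from the construction of $\psiup$ in the preceding lemma, by evaluating that construction on a well chosen nowhere-zero section of $KM$ near $p$; recall from that lemma that the class $\llbracket\alpha\rrbracket\in\mathrm{H}^{0,1}(p)$ produced by a section is independent of the section used.

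\textbf{The locally embeddable case.} Suppose $M$ is locally $CR$-embeddable at $p$ and pick a $CR$-chart $(U;z_1,\dots,z_{\nuup})$ centered at $p$. Each $dz_j$ is a smooth $1$-form with $dz_j(Z)=Zz_j=0$ for $Z\in\Zt$, hence $dz_j\in\Jt^1_M(U)$; since moreover $dz_1(p)\wedge\cdots\wedge dz_{\nuup}(p)\neq 0$, after shrinking $U$ the form $\omegaup:=dz_1\wedge\cdots\wedge dz_{\nuup}$ is a smooth nowhere-zero section of $KM=\mathscr{Q}^{\nuup,0}$ over $U$. As $d\omegaup=d(dz_1\wedge\cdots\wedge dz_{\nuup})=0$, the representative $\alpha=0$ satisfies \eqref{eq:35} for this $\omegaup$, so $\psiup(p)=\llbracket 0\rrbracket=0$.

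\textbf{The completely integrable case.} Assume $\Zt$ is completely integrable at $p$. Applying the definition to a basis $\etaup_1,\dots,\etaup_{\nuup}$ of $\mathbf{Z}^0_pM$ produces $\mathcal{C}^1$ $CR$ functions $z_1,\dots,z_{\nuup}$ near $p$ with $dz_j(p)=\etaup_j$, hence with $dz_1\wedge\cdots\wedge dz_{\nuup}\neq 0$ on some simply connected neighbourhood $V$ of $p$. The continuous $\nuup$-form $\zetaup:=dz_1\wedge\cdots\wedge dz_{\nuup}$ annihilates $\Zt$ at each point of $V$ (every factor does), so it is a continuous nowhere-zero section of $KM$ over $V$, and it is $d$-closed in the sense of currents because locally $\zetaup=d\bigl(z_1\,dz_2\wedge\cdots\wedge dz_{\nuup}\bigr)$. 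Let now $\alpha$ satisfy \eqref{eq:35} for some smooth nowhere-zero section $\omegaup$ of $KM$; replacing $\alpha$ by a cohomologous representative amounts to multiplying $\omegaup$ by $e^{g}$ with $g$ smooth, which changes the $u$ we seek by $g$, so we may assume $d\omegaup=\alpha\wedge\omegaup$ on $V$. Writing $\zetaup=e^{-u}\omegaup$ with $u$ a continuous branch of $\log(\omegaup/\zetaup)$ on $V$, one has formally
\[ 0=d\zetaup=d(e^{-u}\omegaup)=-e^{-u}\,du\wedge\omegaup+e^{-u}\,d\omegaup=e^{-u}\,(\alpha-du)\wedge\omegaup, \]
whence $(du-\alpha)\wedge\omegaup=0$, i.e.\ $\bar\partial_Mu=[\alpha]$ with $u\in\mathcal{C}^0_{(p)}$.

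\textbf{The point that needs care.} Since $u$ is merely continuous, the displayed computation must be interpreted distributionally. Precisely, one must verify the weak identity $\int_M u\,d\etaup=-\int_M\alpha\wedge\etaup$ for all $\etaup\in(\Jtm)^{\nuup}(V)\cap\varOmega^{2n+k-1}_0(V)$, which after an integration by parts (legitimate for continuous $u$) reduces to $(du-\alpha)\wedge\omegaup=0$ in the sense of currents, and this is to be extracted from the weak closedness of $\zetaup=dz_1\wedge\cdots\wedge dz_{\nuup}$ together with $d\omegaup=\alpha\wedge\omegaup$. The delicacy is that $u$ (hence $d(e^{-u})$) and $\zetaup$ are not smooth, so the chain rule $d(e^{-u})=-e^{-u}\,du$, the cancellation of the factor $e^{-u}$, and products like $du\wedge\zetaup$ are not literally available: the computation has to be organised so that only smooth forms are differentiated, or pushed through a regularisation of the $z_j$. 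This is the only genuine obstacle; the rest is the bookkeeping above.
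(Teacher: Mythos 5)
Your proof of the first assertion is complete and correct, and it is clearly the intended one (the paper in fact states this lemma without proof): a $CR$-chart centered at $p$ gives the $d$-closed, nowhere vanishing section $dz_1\wedge\cdots\wedge dz_{\nuup}$ of $KM$, so $\alpha=0$ is an admissible representative in \eqref{eq:35} and $\psiup(p)=0$, the class being independent of the chosen section.

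For the second assertion, however, the step you set aside in your last paragraph is not bookkeeping: it is the whole content of the statement, and it cannot be disposed of by ``organising the computation so that only smooth forms are differentiated.'' Complete integrability only provides $\mathcal{C}^1$ solutions $z_j$, so $\lambda:=\zetaup/\omegaup=e^{-u}$ is merely continuous, and no hypoellipticity is available to improve it. What you can legitimately extract (linearly in $\lambda$) is this: $d\zetaup=0$ as a current (your remark $\zetaup=d(z_1\,dz_2\wedge\cdots\wedge dz_{\nuup})$ is fine, since mollifying the $z_j$ only uses uniform convergence of products of continuous functions), and hence, testing against forms $\omegaup\wedge\gamma$ --- which exhaust $(\Jtm)^{\nuup}\cap\varOmega^{2n+k-1}_0$ near $p$ --- the distributional equations $Z\lambda+\alpha(Z)\,\lambda=0$ for every smooth section $Z$ of $\Zt$. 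But the passage from this to $Z(\log\lambda)=-\alpha(Z)$, i.e.\ to $\bar\partial_Mu=[\alpha]$ weakly for $u=-\log\lambda$, is a nonlinear chain rule applied to a weak solution that is only continuous, and it is not a formal consequence of the previous identity: turning $e^{-u}$ into $u$ is not a linear operation on distributions, and a naive regularisation of the $z_j$ produces commutator terms (products of a vanishing quantity with an unbounded derivative of the mollified coefficient) that do not obviously vanish in the limit. The gap can be closed, but it requires an actual device, for instance Friedrichs regularisation of $\lambda$ itself together with the commutator lemma for vector fields with smooth (locally Lipschitz) coefficients acting on $L^{\infty}$ functions: writing $\lambda_{\epsilon}=\rho_{\epsilon}\ast\lambda$, one has $Z\lambda_{\epsilon}=\rho_{\epsilon}\ast(Z\lambda)+r_{\epsilon}$ with $r_{\epsilon}\to0$ in $L^1_{\mathrm{loc}}$, hence $Z\,F(\lambda_{\epsilon})=F'(\lambda_{\epsilon})\,Z\lambda_{\epsilon}\to -F'(\lambda)\,\alpha(Z)\,\lambda$ in $L^1_{\mathrm{loc}}$ for a branch $F$ of $\log$ holomorphic near the compact range of $\lambda$ (holomorphy of $F$ is what spares you any control of $Z\bar\lambda$), while $F(\lambda_{\epsilon})\to F(\lambda)$ locally uniformly; in the limit $Z(\log\lambda)=-\alpha(Z)$ weakly, which is the desired $\bar\partial_Mu=[\alpha]$. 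Without some such argument your proof stops exactly where the smooth computation preceding the lemma in the paper ceases to apply.
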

\subsection{$CR$-foldings} Let us recall the notion of a fold
singularity for a smooth map (see e.g. \cite{E1970}).
Let $M$, $N$ be real smooth manifolds of the same real dimension. 
A map $\piup:M\to{N}$ is a 
\emph{fold-map}
if there is a smooth submanifold $V$ of $M$ such that:
\begin{list}{-}{}
\item the restriction of $\piup$ to $M\setminus{V}$ is a two-sheeted covering;
\item the restriction of $\piup$ to ${V}$ is a smooth immersion;
\item there is an involution  $\sigmaup:U\to{U}$ of a 
tubular neighborhood $U$ of $V$ in $M$ such that 
$\piup\circ\sigmaup=\piup$ on $U$.
\end{list}
The corresponding notion in $CR$ geometry will be 
a folding about a $CR$-divisor.
Let us introduce the notions that we will utilize in the sequel.
\begin{defn}[Smooth $CR$-divisors] A \emph{smooth $CR$-divisor} 
of a $CR$ manifold $M$
is a smooth submanifold $D$ of $M$, having real codimenison $2$, 
such that
for each $p_0\in{D}$ there is an open neighborhood $U_{p_0}$ 
of $p_0$ in $M$ and a
function $f\in\Ot^{\infty}_M(U_{p_0})$ such that
$D\cap{U}_{p_0}=\{p\in {U}_{p_0}\mid f(p)=0\}$ and 
$df(p_0)\wedge{d}\bar{f}(p_0)\neq{0}$.
\end{defn}
\begin{defn}[$CR$-folding] Let $M,N$ be $CR$ manifolds having the same 
$CR$ dimension. A $CR$-folding is a proper 
smooth $CR$ map $\piup:M\to{N}$ such that
there exists a smooth $CR$-divisor $D$ on $M$ with the properties:
\begin{list}{-}{}
\item the restriction of $\piup$ to $M\setminus{D}$ is 
a $CR$-submersion and a smooth circle bundle;
\item 
the restriction of $\piup$ to $D$ is a 
 smooth $CR$ immersion;
\item there is a tubular neighborhood $U$ of $D$ in $M$ and 
an $\mathbf{S}^1$-action on the fibers of $\piup|_{U}:U\to\piup(U)$,
for which $D$ is the set of fixed points.
\end{list}
\end{defn}
\begin{exam} The map
$S^3=\{(z,w)\mid z\bar{z}+w\bar{w}=1\}\ni(z,w)
\xrightarrow{\;\piup\;}{z}\in\mathbb{C}$ is a $CR$-folding. 
The set $D=\{(z,0)\mid z\bar{z}=1\}=\{(z,w)\in{S}^3\mid w=0\}$ 
is a smooth
$CR$-divisor in $S^3$, and $\piup^{-1}(z_0)=\{(z_0,w)
\mid w\bar{w}=1-z_0\bar{z}_0\}$.
We can define, in the
tubular neighborhood $U=S^3\cap\{|z|>0\}$, an 
$\mathbf{S}^1$-action 
by $e^{i\theta}\cdot (z,w)=
(z,e^{i\theta}w)$.
\end{exam}
\begin{exam} \label{ex45} Let $Q\subset\mathbb{CP}^{\nuup}$,
with $\nuup\geq{3}$, be
the ruled real projective  quadric, which is a $CR$ submanifold of
type $(n,1)$, with $n=\nuup-1$, and 
Levi signature $(1,n-1)$. For a suitable choice of homogeneous
coordinates, $Q$ has equation
\begin{equation}
  \label{eq:5.1}
  z_0\bar{z}_0+z_1\bar{z}_1={\sum}_{j=2}^{\nuup}z_j\bar{z}_j.
\end{equation}
The point $p_0\equiv(1,0,\hdots,0)$ does
not belong to $Q$. Hence, by 
associating to each $p\in{Q}$ the complex line
$p_0p$, we obtain a map $\pi:Q\to\mathbb{CP}^{n}$,
where $\mathbb{CP}^{n}$ is the set of complex lines through $p_0$
of $\mathbb{CP}^{n+1}$. After identifying 
$\mathbb{CP}^{n}$ with the
hyperplane $\{z_0=0\}$, the map $\pi$ is described in homogeneous coordinates by
\begin{equation}
  \label{eq:5.2}
  \pi:(z_0,z_1,\hdots,z_\nuup)\longrightarrow (z_1,\hdots,z_\nuup).
\end{equation}
This map is a $CR$-folding of $M$ into $\mathbf{CP}^n$, 
with divisor $Q\cap\{z_0=0\}$, 
whose image
is the complement of
an open  ball in the projective space:
\begin{equation}
  \label{eq:5.3}
  \pi(Q)=\big\{z_1\bar{z}_1\leq {\sum}_{j=2}^{\nuup}z_j\bar{z}_j\big\}.
\end{equation}
\par 
To describe the generators of the ideal sheaf 
$\Jt_Q$ of $Q$, we consider the covering $\{U,V\}$ of $Q$
with $U=\{z_0\neq{0}\}$, $V=\{z_1\neq{0}\}$. Then $\Jt_Q$ 
is defined by the generators
\begin{gather*}
  z_0^{-1}dz_1,\;z_0^{-1}dz_2,\hdots,\;z_0^{-1}dz_\nuup\quad\text{on}\; U\cap{Q},\\
z_1^{-1}dz_0,\; z_1^{-1}dz_2,\hdots,\;z_1^{-1}dz_\nuup\quad\text{on}\; V\cap{Q}.
\end{gather*}
The canonical bundle is generated by $z_0^{-\nuup}dz_1\wedge
dz_2\wedge\cdots{dz_\nuup}$
on $U\cap{Q}$ and by $z_1^{-\nuup}dz_0\wedge
dz_2\wedge\cdots{dz_\nuup}$ on $V\cap{Q}$. 
\end{exam}

\subsection{Construction of a locally non $CR$-embeddable
perturbation}\label{s:5}
In this subsection we describe a procedure to define 
a non locally $CR$-embeddable $CR$ structure on a neighborhood of
$p_0$ in $M$ that we will use in \S\ref{s6} to produce a
\textit{global} example.\par
Let $M$, $N$ be $CR$ manifolds of type $(n,k)$, $(n,k-1)$ respectively.
Assume that there is a $CR$ map $\piup:M\to{N}$ having a Lorentzian
singularity and a $CR$-folding 
at $p_0$, and that $M$, $N$ are locally
$CR$-embeddable at $p_0$, $q_0$, respectively. 
We are in fact in the situation of Lemma\,\ref{lem516} and we keep the
notation therein. 
\par
Let $\alphaup$ be a $(0,1)$-form on $\omega$,
with $\bar{\partial}_N\alphaup=0$ and $\alphaup=0$ on
$\omega_+$. Then
 $z_1^{-1}\piup^*\alphaup$ is well defined and smooth because
$\piup^*\alphaup$ vanishes to infinite order on $D=\{z_1=0\}$, and
we may consider on $U$ the $CR$-structure
with ideal sheaf $\Jt'_U$ generated by
\begin{equation}\label{eq:62}
  dz_1-z_1^{-1}\piup^*\alphaup, \; dz_2,\; \hdots,\; dz_{\nuup}.
\end{equation}
This new $CR$ structure agrees with the original one
to infinite order at all points of~$D$. 
If this new $CR$ structure admits a $CR$-chart centered at $p_0$,
then there is a smooth function $u$, defined on a neighborhood
of $p_0$, with $u(p_0)=0$ and 
\begin{equation}\label{eq63}
  d(e^u(dz_1-z_1^{-1}\piup^*\alphaup)\wedge dz_2\wedge \cdots\wedge
dz_{\nuup})=0.
\end{equation}
Then we obtain
\begin{equation*}
  du\wedge (dz_1-z_1^{-1}\piup^*\alphaup)\wedge dz_2\wedge \cdots\wedge
dz_{\nuup}+z_1^{-2}dz_1\wedge\piup^*\alphaup\wedge dz_2\wedge \cdots\wedge
dz_{\nuup}=0,
\end{equation*}
from which we get
\begin{equation*}
 (d(z_1^2u)-\piup^*\alphaup^*) \wedge\frac{dz_1}{z_1}\wedge
 dz_2\wedge \cdots\wedge
dz_{\nuup} +d(u\,\piup^*\omega^*\wedge
 dz_2\wedge \cdots\wedge
dz_{\nuup})=0.
\end{equation*}
Next we integrate on the fiber. For $q\in{W}\cap\omega_-$,
where $W$ is a suitable small neighborhood of $q_0$ in $\omega$, 
 we obtain
\begin{equation*}
  w(q)=\piup_*(z_1^2u)(q)=\tfrac{1}{2\pi{i}}\oint_{\piup^{-1}(q)}z_1u \,dz_1,
\end{equation*}
and therefore 
\begin{align*}
& dw(q)\wedge
dz_2\wedge \cdots\wedge
dz_{\nuup} =\tfrac{1}{2\pi i}\oint_{\piup^{-1}(q)}d(z_1^2u)\frac{dz_1}{z_1}\wedge
dz_2\wedge \cdots\wedge
dz_{\nuup} \\
&=\tfrac{1}{2\pi i} \oint_{\piup^{-1}(q)} \piup^*\alphaup^*\frac{dz_1}{z_1}\wedge
dz_2\wedge \cdots\wedge
dz_{\nuup}  
=\alphaup\wedge
dz_2\wedge \cdots\wedge
dz_{\nuup},
\end{align*}
because 
\begin{equation*}
  \oint_{\piup^{-1}(q)}d(u\,\piup^*\omega^*\wedge
 dz_2\wedge \cdots\wedge
dz_{\nuup})=0.
\end{equation*}
We observe that $w=0^2$ on $W\cap\partial\omega_-$ and that the equality
established above means that $w$ satisfies
\begin{equation*}
  \bar\partial_Nw=[\alphaup]\quad\text{on $N$}. 
\end{equation*}
By Lemma\,\ref{lem:3.4}, we actually have $w=0^{\infty}$ on $\partial{N}$. 
By Proposition\,\ref{prop:3.3} there are $\bar{\partial}_N$-closed forms
$\alphaup$, of type $(0,1)$, 
vanishing to infinite order on $W\cap\partial\omega_-$, for which the
Cauchy problem
\begin{equation*}
  \begin{cases}
    \bar{\partial}_Nw=\alphaup&\text{on $W\cap\omega_-$},\\
w=0^{\infty}&\text{on $W\cap\partial\omega_-$}
  \end{cases}
\end{equation*}
has no solution when $W$ is any open neighborhood of $q_0$,
hence yielding non $CR$-embeddable $CR$ structures on $U$ which agree to infinite
order with the original one.
\subsection{A global example}\label{s6}
In this section we construct a 
non locally $CR$-embeddable $CR$ structure on
the Lorentzian quadric of Example\,\ref{ex45}. \par
In $\mathbb{CP}^{\nuup}$, with 
homogeneous coordinates $z_0,z_1,\hdots,z_{\nuup}$,  
for $\nuup\geq{2}$, we consider the quadric
$Q=\{z_0\bar{z}_0+z_1\bar{z}_1 = z_2\bar{z}_2+\cdots+z_{\nuup}\bar{z}_{\nuup}\}$.  
Fix the divisor $D=\{z_0=0\}$ and the global $CR$-folding
$Q\to{N}$ where 
$N=\{z_1\bar{z}_1\leq z_2\bar{z}_2+\cdots+z_{\nuup}\bar{z}_{\nuup}\}
\subset\mathbb{CP}^{\nuup-1}$
is a strictly pseudoconcave closed domain in $\mathbb{CP}^{\nuup -1}$. 
The closure of its complement is  an Euclidean ball 
${B}$ of $\mathbb{C}^{\nuup-1}=\mathbb{CP}^{\nuup-1}\setminus\{z_1=0\}$.
Fix a smooth function $f$, with compact support in $\mathbb{C}^{\nuup -1}$,
which is holomorphic on $B$, but cannot be continued holomorphically beyond
any point of $\partial{B}$, and let $\alphaup$ be the restriction of
$\bar{\partial}f$ to $N$. We observe that $\zeta=\dfrac{z_1}{z_0}$ 
is meromorphic on $\mathbf{CP}^m$ and that $\zeta\piup^*\alphaup$
is well-defined on $Q$. We define a new $CR$ structure on $Q$
by the ideal sheaf having 
generators
\begin{gather*}
z_1^{-1}dz_0-\zeta\piup^*\alphaup^*,\; z_1^{-1}dz_2,\;\hdots,\; z_1^{-1}dz_{\nuup}
\quad\text{on $M\cap\{z_1\neq{0}\}$},\\
z_0^{-1}dz_1,\; z_0^{-1}dz_2,\;\hdots,\; z_0^{-1}dz_{\nuup}
\quad\text{on $M\setminus\supp\piup^*\alphaup$}.
\end{gather*}
Note that the hyperplane $\{z_1=0\}$ in $\mathbb{CP}^{\nuup -1}$ does not intersect
the support of $\alphaup$, and hence the ideal sheaf is well defined.
By the argument in \S\ref{s:5}, with this new $CR$ structure
$Q$ is not locally $CR$-embeddable at all points of the divisor $D$. 
Thus we have obtained
\begin{thm}
There are $CR$ structures of type $(m-1,1)$ on the Lorentzian quadric
$Q$ that are not locally $CR$-embeddable at all points of a hyperplane section 
of~$Q$. \qed
\end{thm}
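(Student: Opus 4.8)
The plan is to run the local construction of \S\ref{s:5} along the entire fold locus of the $CR$-folding $\piup\colon Q\to N$ of Example~\ref{ex45}, using one \emph{explicit} bad Cauchy datum on $N$ in place of the abstract one furnished by Proposition~\ref{prop:3.3}. Write $\nuup=m$. By Example~\ref{ex45}, $Q\subset\mathbb{CP}^{\nuup}$ is a Lorentzian $CR$ manifold of type $(m-1,1)$, $D=Q\cap\{z_0=0\}$ is a smooth $CR$-divisor which is the fold locus of $\piup$ and along which $\piup$ has a Lorentzian singularity, and $N=\{z_1\bar z_1\le z_2\bar z_2+\cdots+z_{\nuup}\bar z_{\nuup}\}\subset\mathbb{CP}^{\nuup-1}$ is a strictly pseudoconcave closed domain with $\partial N=\piup(D)$, its complement $B=\mathbb{CP}^{\nuup-1}\setminus N$ being the unit ball of the affine chart $\{z_1\ne0\}\cong\mathbb{C}^{\nuup-1}$. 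As an open subset of a complex manifold, $N$ is trivially minimal and locally $CR$-embeddable, and $Q$ is globally $CR$-embedded, so the hypotheses of \S\ref{s:5} hold at every point of $D$. For the perturbation datum I would fix $f\in\mathcal{C}^{\infty}_0(\mathbb{C}^{\nuup-1})$ whose restriction to $B$ is holomorphic and extends holomorphically to no point of $\partial B$ --- take $f$ agreeing on $\bar B$ with a function of $A^{\infty}(B)$ having $\partial B$ as natural boundary and extend it smoothly with compact support, which forces $\bar\partial f$ to be flat on $\partial B$ --- and set $\alphaup=\bar\partial f|_{N}$. Then $\alphaup$ is $\bar\partial_N$-closed, flat on $\partial N=\piup(D)$, and supported away from $\{z_1=0\}$; as $\zetaup=z_1/z_0$ is meromorphic on $\mathbb{CP}^{\nuup}$ with polar divisor cutting $Q$ exactly in $D$ and $\piup^*\alphaup$ is flat on $D$, the form $\zetaup\,\piup^*\alphaup$ extends to a global smooth $(0,1)$-form on $Q$, supported in $Q\cap\{z_1\ne0\}$ and flat on $D$.

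Next I would verify that the two prescriptions
\[
z_1^{-1}dz_0-\zetaup\,\piup^*\alphaup,\ z_1^{-1}dz_2,\ \hdots,\ z_1^{-1}dz_{\nuup}\quad\text{on }Q\cap\{z_1\ne0\},
\]
\[
z_0^{-1}dz_1,\ z_0^{-1}dz_2,\ \hdots,\ z_0^{-1}dz_{\nuup}\quad\text{on }Q\setminus\supp\piup^*\alphaup,
\]
glue into one global degree-one generating system of an ideal sheaf $\Jt'_Q$: the two open sets cover $Q$ because their common complement is $\{z_1=0\}\cap\supp\piup^*\alphaup=\emptyset$; on the overlap the perturbation term vanishes, so the two lists span the same line subbundle of $\mathbb{C}T^*Q$ by the description of $\Jt_Q$ in Example~\ref{ex45}; and $\supp\piup^*\alphaup\supset D$ (because $f|_B$ is nowhere extendable), so $z_0\ne0$ holds on $Q\setminus\supp\piup^*\alphaup$ and the second list makes sense there. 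Involutivity is the computation of \S\ref{s:5}: if $\thetaup_1,\hdots,\thetaup_{\nuup}$ denote the generators, then $d\thetaup_i\wedge\thetaup_1\wedge\cdots\wedge\thetaup_{\nuup}=d\thetaup_i\wedge dz_0\wedge dz_2\wedge\cdots\wedge dz_{\nuup}=0$, using $\bar\partial_N\alphaup=0$. Hence $\Jt'_Q$ is a $CR$ structure of type $(m-1,1)$ on $Q$, coinciding to infinite order with the original one along $D$.

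The substance of the proof is the failure of local $CR$-embeddability along $D$, which I would prove by contradiction. Fix $p_0\in D$ and $q_0=\piup(p_0)\in\partial N$; near $p_0$ we are exactly in the situation of Lemma~\ref{lem516} and \S\ref{s:5}, with perturbation datum the germ of $\alphaup$ at $q_0$, which vanishes on the side $\omega_+$ not contained in $\piup(Q)$. If $\Jt'_Q$ admitted a $CR$-chart at $p_0$, the computation of \S\ref{s:5}, obtained by integrating the embedding relation over the circle fibers of $\piup$, would yield a function $w$ near $q_0$ solving $\bar\partial_N w=[\alphaup]$ with $w$ vanishing to second order on $\partial N$; by Lemma~\ref{lem:3.4} (uniqueness in the non-characteristic Cauchy problem) $w$ would be flat on $\partial N$. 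Then $g:=f-w$ is holomorphic on $N$ near $q_0$ (there $\bar\partial_N$ is the ordinary $\bar\partial$ and $\bar\partial g=\alphaup-\alphaup=0$), is continuous up to $\partial N$, and equals $f$ on $\partial N$ near $q_0$; since $f$ is holomorphic on the opposite side $B$ and agrees with $g$ along the common boundary hypersurface $\partial B=\partial N$, Morera's theorem --- equivalently the edge-of-the-wedge theorem --- assembles $f|_B$ and $g$ into one holomorphic function on a full neighbourhood of $q_0$, contradicting the choice of $f$. Thus $\Jt'_Q$ is not locally $CR$-embeddable at any point of the hyperplane section $D=Q\cap\{z_0=0\}$, as asserted.

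I expect the main obstacle to be the uniformity of the \S\ref{s:5} argument along all of $D$: one needs that $\piup$ is proper near $D$ and that its fibers over the pseudoconcave side form a uniform circle bundle, so that the fiber-integrated function $w$ is globally well defined and regular enough up to $\partial N$ for both the non-characteristic Cauchy uniqueness and the boundary gluing to apply --- controlling the regularity of $w$ at $\partial N$ (only finitely controlled a priori) in the edge-of-the-wedge step is the delicate point. A secondary difficulty is producing the global $f$ with all three properties simultaneously --- compact support, holomorphic and nowhere extendable on $B$, and $\bar\partial f$ flat on $\partial B$ --- which is where the smooth extension of an $A^{\infty}(B)$-function with $\partial B$ as natural boundary enters.
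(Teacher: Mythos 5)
Your proposal is correct and follows essentially the same route as the paper's \S\ref{s:5}--\S\ref{s6}: the same folding $\piup:Q\to N$, the same explicit nowhere-extendable $f$ with $\alphaup=\bar\partial f|_N$, the same glued generators, and the same fiber-integration argument reducing a hypothetical $CR$-chart at a point of $D$ to a flat-boundary solution of $\bar\partial_N w=\alphaup$, contradicting non-extendability of $f|_B$. The only difference is that you spell out the final gluing contradiction (which the flatness of $w$ on $\partial N$ makes immediate), whereas the paper leaves it implicit in the reference to the argument of \S\ref{s:5} and Proposition~\ref{prop:3.3}.
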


\bibliographystyle{amsplain}
\renewcommand{\MR}[1]{}
\providecommand{\bysame}{\leavevmode\hbox to3em{\hrulefill}\thinspace}
\providecommand{\MR}{\relax\ifhmode\unskip\space\fi MR }
\providecommand{\MRhref}[2]{%
  \href{http://www.ams.org/mathscinet-getitem?mr=#1}{#2}
}
\providecommand{\href}[2]{#2}


\begin{thebibliography}{10}

\bibitem{Ak87}
Takao Akahori, \emph{A new approach to the local embedding theorem of
  {CR}-structures for {$n\geq 4$} (the local solvability for the operator
  {$\overline\partial_b$} in the abstract sense)}, Mem. Amer. Math. Soc.
  \textbf{67} (1987), no.~366, xvi+257. \MR{MR888499 (88i:32027)}

\bibitem{AHNP}
Andrea Altomani, C.~Denson Hill, Mauro Nacinovich, and Egmont Porten,
  \emph{Complex vector fields and hypoelliptic partial differential operators},
  Ann. Inst. Fourier (Grenoble) \textbf{60} (2010), no.~3, 987--1034.
  \MR{2680822}

\bibitem{AFN81}
Aldo Andreotti, Gregory Fredricks, and Mauro Nacinovich, \emph{On the absence
  of {P}oincar\'e lemma in tangential {C}auchy-{R}iemann complexes}, Ann.
  Scuola Norm. Sup. Pisa Cl. Sci. (4) \textbf{8} (1981), no.~3, 365--404.
  \MR{MR634855 (83e:32021)}

\bibitem{AH72}
Aldo Andreotti and C.~Denson Hill, \emph{E. {E}. {L}evi convexity and the
  {H}ans {L}ewy problem. {II}. {V}anishing theorems}, Ann. Scuola Norm. Sup.
  Pisa (3) \textbf{26} (1972), 747--806. \MR{MR0477150 (57 \#16693)}

\bibitem{BR87}
M.~S. Baouendi and L.~P. Rothschild, \emph{Embeddability of abstract {CR}
  structures and integrability of related systems}, Ann. Inst. Fourier
  (Grenoble) \textbf{37} (1987), no.~3, 131--141. \MR{MR916277 (89c:32053)}

\bibitem{BR90}
M.~S. Baouendi and Linda~Preiss Rothschild, \emph{Cauchy-{R}iemann functions on
  manifolds of higher codimension in complex space}, Invent. Math. \textbf{101}
  (1990), no.~1, 45--56. \MR{MR1055709 (91j:32020)}

\bibitem{BT81}
M.~S. Baouendi and F.~Tr{\`e}ves, \emph{A property of the functions and
  distributions annihilated by a locally integrable system of complex vector
  fields}, Ann. of Math. (2) \textbf{113} (1981), no.~2, 387--421. \MR{MR607899
  (82f:35057)}

\bibitem{BCH}
Shiferaw Berhanu, Paulo~D. Cordaro, and Jorge Hounie, \emph{An introduction to
  involutive structures}, New Mathematical Monographs, vol.~6, Cambridge
  University Press, Cambridge, 2008. \MR{2397326 (2009b:32048)}

\bibitem{BdM74}
L.~Boutet~de Monvel, \emph{Int\'egration des \'equations de {C}auchy-{R}iemann
  induites formelles}, S\'eminaire {G}oulaouic-{L}ions-{S}chwartz 1974--1975;
  \'{E}quations aux deriv\'ees partielles lin\'eaires et non lin\'eaires,
  Centre Math., \'Ecole Polytech., Paris, 1975, pp.~Exp. No. 9, 14.
  \MR{MR0409893 (53 \#13645)}

\bibitem{BHN02}
J.~Brinkschulte, C.~Denson Hill, and M.~Nacinovich, \emph{Obstructions to
  generic embeddings}, Ann. Inst. Fourier (Grenoble) \textbf{52} (2002), no.~6,
  1785--1792. \MR{MR1952531 (2003k:32050)}

\bibitem{BHN03}
Judith Brinkschulte, C.~Denson Hill, and Mauro Nacinovich, \emph{The
  {P}oincar\'e lemma and local embeddability}, Boll. Unione Mat. Ital. Sez. B
  Artic. Ric. Mat. (8) \textbf{6} (2003), no.~2, 393--398. \MR{MR1988212
  (2004c:32073)}

\bibitem{C94}
David Catlin, \emph{Sufficient conditions for the extension of {CR}
  structures}, J. Geom. Anal. \textbf{4} (1994), no.~4, 467--538. \MR{MR1305993
  (95j:32028)}

\bibitem{E1970}
Ja.~M. {\`E}lia{\v{s}}berg, \emph{Singularities of folding type}, Izv. Akad.
  Nauk SSSR Ser. Mat. \textbf{34} (1970), 1110--1126. \MR{0278321 (43 \#4051)}

\bibitem{H88}
C.~Denson Hill, \emph{What is the notion of a complex manifold with a smooth
  boundary?}, Algebraic analysis, {V}ol.\ {I}, Academic Press, Boston, MA,
  1988, pp.~185--201. \MR{MR992454 (90e:32009)}

\bibitem{H91}
\bysame, \emph{Counterexamples to {N}ewlander-{N}irenberg up to the boundary},
  Several complex variables and complex geometry, {P}art 3 ({S}anta {C}ruz,
  {CA}, 1989), Proc. Sympos. Pure Math., vol.~52, Amer. Math. Soc., Providence,
  RI, 1991, pp.~191--197. \MR{MR1128593 (92m:32027)}

\bibitem{HN93}
C.~Denson Hill and Mauro Nacinovich, \emph{Embeddable {CR} manifolds with
  nonembeddable smooth boundary}, Boll. Un. Mat. Ital. A (7) \textbf{7} (1993),
  no.~3, 387--395. \MR{MR1249115 (95d:32019)}

\bibitem{HN99}
\bysame, \emph{Solvable {L}ie algebras and the embedding of {CR} manifolds},
  Boll. Unione Mat. Ital. Sez. B Artic. Ric. Mat. (8) \textbf{2} (1999), no.~1,
  121--126. \MR{MR1794546 (2001j:32039)}

\bibitem{HN06}
\bysame, \emph{On the failure of the {P}oincar\'e lemma for
  {$\overline\partial_M$}. {II}}, Math. Ann. \textbf{335} (2006), no.~1,
  193--219. \MR{MR2217688 (2006m:32043)}

\bibitem{Ho61}
Lars H{\"o}rmander, \emph{On existence of solutions of partial differential
  equations}, Partial differential equations and continuum mechanics, Univ. of
  Wisconsin Press, Madison, Wis., 1961, pp.~233--240. \MR{MR0124601 (23
  \#A1913)}

\bibitem{J86}
Howard Jacobowitz, \emph{Simple examples of nonrealizable {CR} hypersurfaces},
  Proc. Amer. Math. Soc. \textbf{98} (1986), no.~3, 467--468. \MR{MR857942
  (87k:32034)}

\bibitem{J88}
\bysame, \emph{Homogeneous solvability and {CR} structures}, Notas de Curso
  [Course Notes], vol.~25, Universidade Federal de Pernambuco Departamento de
  Matem\'atica, Recife, 1988. \MR{934571 (89f:35154)}

\bibitem{JT82}
Howard Jacobowitz and Fran{\c{c}}ois Tr{\`e}ves, \emph{Nonrealizable {CR}
  structures}, Invent. Math. \textbf{66} (1982), no.~2, 231--249.

\bibitem{JT83}
\bysame, \emph{Nowhere solvable homogeneous partial differential equations},
  Bull. Amer. Math. Soc. (N.S.) \textbf{8} (1983), no.~3, 467--469.

\bibitem{K67}
Walter Koppelman, \emph{The {C}auchy integral for functions of several complex
  variables}, Bull. Amer. Math. Soc. \textbf{73} (1967), 373--377. \MR{0209519
  (35 \#416)}

\bibitem{K82a}
Masatake Kuranishi, \emph{Strongly pseudoconvex {CR} structures over small
  balls. {I}. {A}n a priori estimate}, Ann. of Math. (2) \textbf{115} (1982),
  no.~3, 451--500. \MR{MR657236 (84h:32023a)}

\bibitem{K82b}
\bysame, \emph{Strongly pseudoconvex {CR} structures over small balls. {II}.
  {A} regularity theorem}, Ann. of Math. (2) \textbf{116} (1982), no.~1, 1--64.
  \MR{MR662117 (84h:32023b)}

\bibitem{K82c}
\bysame, \emph{Strongly pseudoconvex {CR} structures over small balls. {III}.
  {A}n embedding theorem}, Ann. of Math. (2) \textbf{116} (1982), no.~2,
  249--330. \MR{MR672837 (84h:32023c)}

\bibitem{hl56}
Hans Lewy, \emph{On the local character of the solutions of an atypical linear
  differential equation in three variables and a related theorem for regular
  functions of two complex variables}, Ann. of Math. (2) \textbf{64} (1956),
  514--522.

\bibitem{L57}
\bysame, \emph{An example of a smooth linear partial differential equation
  without solution}, Ann. of Math. (2) \textbf{66} (1957), 155--158.
  \MR{MR0088629 (19,551d)}

\bibitem{MM94}
Lan Ma and Joachim Michel, \emph{Regularity of local embeddings of strictly
  pseudoconvex {CR} structures}, J. Reine Angew. Math. \textbf{447} (1994),
  147--164. \MR{1263172 (95d:32017)}

\bibitem{mez93}
Abdelhamid Meziani, \emph{Perturbation of a class of {CR} structures of
  codimension larger than one}, J. Funct. Anal. \textbf{116} (1993), no.~1,
  225--244.

\bibitem{N84}
M.~Nacinovich, \emph{Poincar\'e lemma for tangential {C}auchy-{R}iemann
  complexes}, Math. Ann. \textbf{268} (1984), no.~4, 449--471. \MR{753407
  (86e:32025)}

\bibitem{NP11}
M.~Nacinovich and E.~Porten, \emph{$\mathcal{C}^{\infty}$-hypoellipticity and
  extension of $cr$ functions}, arXiv:1107.3374 (2011).

\bibitem{NN57}
A.~Newlander and L.~Nirenberg, \emph{Complex analytic coordinates in almost
  complex manifolds}, Ann. of Math. (2) \textbf{65} (1957), 391--404.
  \MR{0088770 (19,577a)}

\bibitem{Nir74}
Louis Nirenberg, \emph{A certain problem of {H}ans {L}ewy}, Uspehi Mat. Nauk
  \textbf{29} (1974), no.~2(176), 241--251, Translated from the English by Ju.
  V. Egorov, Collection of articles dedicated to the memory of Ivan
  Georgievi{\v{c}} Petrovski{\u\i} (1901--1973), I.

\bibitem{NT70}
Louis Nirenberg and Fran{\c{c}}ois Tr{\`e}ves, \emph{On local solvability of
  linear partial differential equations. {I}. {N}ecessary conditions}, Comm.
  Pure Appl. Math. \textbf{23} (1970), 1--38. \MR{MR0264470 (41 \#9064a)}

\bibitem{NT71}
\bysame, \emph{On local solvability of linear partial differential equations.
  {II}. {S}ufficient conditions}, Comm. Pure Appl. Math. \textbf{23} (1970),
  459--509. \MR{MR0264471 (41 \#9064b)}

\bibitem{PK87}
S.~I. Pinchuk and S.~V. Khasanov, \emph{Asymptotically holomorphic functions
  and their applications}, Mat. Sb. (N.S.) \textbf{134(176)} (1987), no.~4,
  546--555, 576. \MR{933702 (89f:32009)}

\bibitem{SW99}
H.~H. Schaefer and M.~P. Wolff, \emph{Topological vector spaces}, second ed.,
  Graduate Texts in Mathematics, vol.~3, Springer-Verlag, New York, 1999.
  \MR{MR1741419 (2000j:46001)}

\bibitem{Tr81}
F.~Tr{\`e}ves, \emph{Approximation and representation of functions and
  distributions annihilated by a system of complex vector fields}, \'Ecole
  Polytechnique Centre de Math\'ematiques, Palaiseau, 1981. \MR{716137
  (84k:58008)}

\bibitem{Tr92}
Fran{\c{c}}ois Tr{\`e}ves, \emph{Hypo-analytic structures}, Princeton
  Mathematical Series, vol.~40, Princeton University Press, Princeton, NJ,
  1992, Local theory. \MR{1200459 (94e:35014)}

\bibitem{Tu88}
A.~E. Tumanov, \emph{Extension of {CR}-functions into a wedge from a manifold
  of finite type}, Mat. Sb. (N.S.) \textbf{136(178)} (1988), no.~1, 128--139.
  \MR{945904 (89m:32027)}

\bibitem{Tu90}
\bysame, \emph{Extension of {CR}-functions into a wedge}, Mat. Sb. \textbf{181}
  (1990), no.~7, 951--964. \MR{1070489 (91f:32010)}

\end{thebibliography}
\end{document}